\newtheorem{theorem}{Theorem}[section]
\newtheorem{lemma}[theorem]{Lemma}
\newtheorem{corollary}[theorem]{Corollary}
\newtheorem{proposition}[theorem]{Proposition}
\theoremstyle{definition}
\newtheorem{example}[theorem]{Example}
\newtheorem{remark}[theorem]{Remark}
\newcommand{\R}{\mathbb{R}}
\newcommand{\C}{\mathbb{C}}
\renewcommand{\k}{\Bbbk}
\newcommand{\K}{\mathbb{K}}
\newcommand{\bL}{\mathbf{L}}
\newcommand{\RR}{{\mathcal R}}
\newcommand{\m}{{\mathfrak{m}}}
\DeclareMathOperator{\rank}{rank}
\DeclareMathOperator{\id}{id}
\DeclareMathOperator{\Sym}{Sym}
\DeclareMathOperator{\ch}{char}
\DeclareMathOperator{\GL}{GL}
\DeclareMathOperator{\Tor}{{Tor}}
\DeclareMathOperator{\corank}{corank}
\DeclareMathOperator{\pf}{pf}
\DeclareMathOperator{\Pf}{Pf}
\DeclareMathOperator{\Det}{Det}
\DeclareMathOperator{\nl}{null}
\DeclareMathOperator{\PD}{PD}
\renewcommand{\emptyset}{\mathrm \O}
\newcommand{\surj}{\twoheadrightarrow}
\newcommand{\inj}{\hookrightarrow}
\def\dot{\mathchar"013A}  
\newcommand{\hdot}{{\raise1pt\hbox to0.35em{\Large $\dot$\!}}} 
\newcommand{\bwedge}{\mbox{\normalsize $\bigwedge$}}
\newcommand{\isom}{\xrightarrow{
   \,\smash{\raisebox{-0.65ex}{\ensuremath{\scriptstyle\simeq}}}\,}}
\DeclareMathAlphabet\mathbfcal{OMS}{cmsy}{b}{n}
\def\@tocline#1#2#3#4#5#6#7{\relax
  \ifnum #1>\c@tocdepth 
  \else
    \par \addpenalty\@secpenalty\addvspace{#2}%
    \begingroup \hyphenpenalty\@M
    \@ifempty{#4}{%
      \@tempdima\csname r@tocindent\number#1\endcsname\relax
    }{%
      \@tempdima#4\relax
    }%
    \parindent\z@ \leftskip#3\relax \advance\leftskip\@tempdima\relax
    \rightskip\@pnumwidth plus4em \parfillskip-\@pnumwidth
    #5\leavevmode\hskip-\@tempdima
      \ifcase #1
       \or\or \hskip 1em \or \hskip 2em \else \hskip 3em \fi%
      #6\nobreak\relax
    \dotfill\hbox to\@pnumwidth{\@tocpagenum{#7}}\par
    \nobreak
    \endgroup
  \fi}
\newcommand{\downmapsto}{\rotatebox[origin=c]{-90}{$\scriptstyle\mapsto$}\mkern2mu}
\numberwithin{equation}{section}
\title[Poincar\'{e} duality and resonance varieties]%
{Poincar\'{e} duality and resonance varieties}
\author[Alexander~I.~Suciu]{Alexander~I.~Suciu$^1$}
\address{Department of Mathematics,
Northeastern University,
Boston, MA 02115, USA}
\email{\href{mailto:a.suciu@northeastern.edu}{a.suciu@northeastern.edu}}
\urladdr{\href{http://web.northeastern.edu/suciu/}%
{web.northeastern.edu/suciu/}}
\thanks{$^1$Supported in part by the Simons Foundation Collaboration Grant 
for Mathematicians \#354156}
\subjclass[2010]{Primary 
55U30, 
57P10.  
Secondary 
13A02,  
13E10,  
14M12,  
15A75, 
57N10.  
}
\keywords{Graded commutative algebra, resonance variety, 
Poincar\'{e} duality algebra, connected sum, BGG correspondence, 
alternating $3$-form, Pfaffian.}
\begin{document}

\begin{abstract}
We explore the constraints imposed by Poincar\'{e} duality on 
the resonance varieties of a graded algebra.  For a $3$-dimensional 
Poincar\'{e} duality algebra $A$, we obtain a fairly precise geometric 
description of the resonance varieties $\RR^i_k(A)$.  
\end{abstract}

\maketitle
\tableofcontents

\section{Introduction}
\label{sect:intro}

\subsection{Resonance varieties}
\label{subsec:res-intro}
The cohomology ring of a space captures deep, albeit incomplete information 
about the homotopy type of the space.  Suppose we are given a connected, 
finite CW-complex $X$ and a coefficient field $\k$ of characteristic different from $2$. 
Finding a presentation for the $\k$-algebra $A=H^{\hdot}(X,\k)$, in 
and of itself, is not the end of the story.  One still would like to extract 
further information from this graded algebra, such as the Betti numbers, 
$b_i(A)=\dim_{\k} A^i$, the bigraded Betti numbers $b_{ij}=\dim_{\k} \Tor^A_i(\k,\k)_j$, 
or the cup-length.  Such numerical invariants, though, are oftentimes too 
coarse to tell apart graded algebras which may differ in quite subtle ways.

Enter the resonance varieties, $\RR^i_k(A)$, which are the main focus 
of attention in this paper.  These varieties are homogenous algebraic 
subsets of the affine space $A^1=H^1(X,\k)$ which keep track of vanishing 
cup products in the cohomology ring of $X$.  More precisely, for each 
$a\in A^1$, consider the cochain complex  $(A,\delta_a)$ with differentials 
$\delta^i_a\colon A^i\to A^{i+1}$ given by $\delta^i_a(u)=au$. 
Then the degree $i$, depth $k$ resonance variety $\RR^i_k(A)$ 
consists of those points $a\in A^1$ for which 
$H^i(A,\delta_a)$ has dimension at least $k$. In particular, 
$\RR^1_1(A)$ is the union of all isotropic planes in $A^1$. 

In general, the resonance varieties can be quite complicated.  
On the other hand, if $A$ is the cohomology ring of a formal space, 
then the resonance varieties of $A$ are unions of rationally defined, 
linear subspaces of $A^1$, see \cite{DPS-duke, DP-ccm}. 
Our main goal here is to see what kind of restrictions another topological 
property, namely, Poincar\'{e} duality, puts on the resonance varieties.

\subsection{Poincar\'{e} duality algebras}
\label{subsec:pd-intro}
A graded, locally finite, graded commutative algebra $A$ 
is said to be a Poincar\'{e} duality algebra of dimension $m$ if 
there exists a $\k$-linear map $\varepsilon\colon A^m \to \k$ 
such that all the bilinear forms $A^i \otimes A^{m-i}\to \k$, 
$a\otimes b\mapsto \varepsilon (ab)$ are non-singular. 
For such a $\PD_m$ algebra, the Betti numbers satisfy the well-known 
equality $b_i(A)=b_{m-i}(A)$.  A similar phenomenon holds 
for the resonance varieties; more precisely, we show in 
Theorem \ref{thm:rpdf} that 
\begin{equation}
\label{eq:pdres-intro}
\RR^{i}_k(A)=\RR^{m-i}_k(A), 
\end{equation}
for all $i$ and $k$.   Most interesting to us is the case when $m=3$.  
For a $\PD_3$ algebra $A$, we have that $\RR^{1}_k(A)=\RR^2_k(A)$, and 
$\RR^i_k(A)\subseteq \{0\}$ for $i=0$ or $3$. So we are left with computing 
the degree $1$ resonance varieties.  

To that effect, we start by noting that the multiplicative  
structure of $A$ is encoded by the alternating $3$-form 
$\mu_A\colon \bwedge^3 A^1\to \k$ 
given by $\mu_A(a\wedge b \wedge c)= \varepsilon (abc)$. 
Fixing a basis $\{e_1,\dots ,e_n\}$ for $A^1$, and 
setting $\mu_{ijk}=\mu_A(e_i\wedge e_j\wedge e_k )$, 
this information can be stored dually in the trivector 
$\mu=\sum \mu_{ijk}\, e^i \wedge e^j \wedge e^k$ belonging to 
$\bwedge^3 (A^1)^*$. 
Conversely, any  $3$-form $\mu \colon \bwedge^3 V \to \k$ 
on a finite-dimensional $\k$-vector space $V$ determines in an 
obvious fashion a $\PD_3$ algebra $A$ over $\k$ for which $\mu_A=\mu$.  
As shown in Theorem \ref{thm:pd3-iso}, 
this construction yields a one-to-one correspondence, 
$A \leftrightsquigarrow \mu_A$, between isomorphism 
classes of $\PD_3$ algebras and equivalence classes of alternating $3$-forms.

The rank of a $3$-form $\mu$ is the minimum dimension of a linear subspace 
$W\subset V$ such that $\mu$ factors through $\bigwedge^3 W$.
The computation of the degree $1$ resonance varieties of a $\PD_3$ 
algebra reduces to the case when the associated $3$-form 
has maximal rank.   More precisely, let $A$ be any $\PD_3$ 
algebra, and write $A^1=B^1\oplus C^1$, where the restriction 
of $\mu_A$ to $\bigwedge^3 B^1$ has rank equal to the rank of $\mu_A$. 
Letting $B$ the $\PD_3$ algebra with associated $3$-form equal to this restriction, 
we show in Theorem \ref{thm:nonmax} that 
\begin{equation}
\label{eq:r1decomp}
\RR^1_k(A) \cong  \RR^1_{k-r+1}(B) \times C^1
\cup \RR^1_{k-r}(B) \times \{0\}
\end{equation}
for all $k\ge 0$, where $r=\corank \mu_A$. In particular, $\RR^1_k(A)=A^1$ for 
all $k< \corank \mu_A$.

In Theorem \ref{thm:dimres} we give a lower bound on the dimension of the  
degree-$1$ resonance varieties up to a certain depth.  Letting $\nu$ denote 
the nullity of $\mu_A$, we show that 
\begin{equation}
\label{eq:resbound}
\dim \RR^1_{\nu -1}(A)\ge \nu \ge 2,
\end{equation}
provided $\overline{\k}=\k$ and $b_1(A)\ge 4$; in particular, $\dim \RR^1_1(A)\ge \nu$. 
Finally, in Theorem \ref{thm:real isotropic} we use a result from \cite{DrS10} to 
show that, with a few exceptions, $\RR^1_1(A)\ne \{0\}$, provided $\k=\R$. 

\subsection{Pfaffians and resonance}
\label{subsec:pfaff-intro}
Consider now the polynomial ring $S=\k[x_1,\dots, x_n]$, and let 
$\theta$ be the $n\times n$ skew-symmetric matrix of $S$-linear 
forms with entries $\theta_{ik}=\sum_{j=1}^n \mu_{jik} x_j$.  It turns 
out that the resonance varieties of $A$ are the degeneracy loci of 
this matrix, that is, 
\begin{equation}
\label{eq:minors-intro}
\RR^1_k(A)=V(I_{n-k}(\theta)),
\end{equation}
the vanishing locus of the ideal of codimension $k$ minors of $\theta$.  
Using known facts about Pfaffian ideals of skew-symmetric matrices, 
we show in Theorem \ref{theorem:be} that 
\begin{equation} 
\label{eq:res jumps-intro}
\RR^1_{2k}(A) =\begin{cases}
\RR^1_{2k+1}(A)& \text{if $n$ is even},\\[2pt]
\RR^1_{2k-1}(A) & \text{if $n$ is odd}.
\end{cases}
\end{equation}

We also show in Theorem \ref{thm:rvanish} that the bottom resonance 
varieties vanish, provided $n\ge 3$ and $\mu_A$ has maximal rank: 
\begin{equation}
\label{eq:bottom-intro}
\RR^1_{n-2}(A)=\RR^1_{n-1}(A)=\RR^1_{n}(A)=\{0\}.
\end{equation}
In this case, we have the following chains  
of inclusions for the varieties $\RR_k=\RR^1_k(A)$:
\begin{align}
\label{eq:rinc}
&A^1=\RR_0=\RR_1 \supseteq \RR_2=\RR_3\supseteq 
 \cdots \supseteq \RR_{n-3} \supseteq \RR_{n-2} =\{0\} &&\text{if $n$ is even},
\\    \notag
&A^1=\RR_0\supseteq \RR_1= \RR_2\supseteq \RR_3 
 \supseteq \cdots \supseteq \RR_{n-3} \supseteq \RR_{n-2} = \{0\} 
 &&\text{if $n$ is odd}.
\end{align}

\subsection{The top resonance varieties}
\label{subsec:topbottom-intro}
By way of contrast, the top resonance varieties of a $\PD_3$ algebra 
$A$ have a much more interesting geometry.  
Without essential loss of generality, we may assume that $n=\dim A^1$ 
is at least $4$ (the cases when $n\le 3$ are easily dealt with). 
We then show in Theorem \ref{thm:respd3} that 
\begin{equation}
\label{eq:r1 pd3-intro}
\RR^1_1(A)=
\begin{cases}
V(\Pf(\mu_A))  & \text{if\/ $n$ is odd and $\mu_A$ is generic in the 
sense of \cite{BP}},\\
A^1 & \text{otherwise}.
\end{cases}
\end{equation}

Finally, suppose $\mu_A$ is generic in the sense of \cite{DFMR}.  
If $n$ is odd, then $\RR^1_1(A)$ is a hypersurface which is smooth if 
$n\le 7$, and singular in codimension $5$ if $n\ge 9$. On the other hand, if 
$n$ is even, then $\RR^1_2(A)$ is a subvariety of codimension $3$, 
which is smooth if $n\le 10$, and is singular in codimension $7$ if $n\ge 12$.

In Appendix \ref{sect:tables} we list the irreducible $3$-forms $\mu_A$ 
of rank at most $8$, according to the classification from \cite{Gu,Dj1},  
together with the corresponding  resonance varieties $\RR^1_k(A)$.  

This work is pursued in \cite{Su-tcone3d}, where we provide 
further applications to the study of cohomology jump loci of $3$-manifolds. 

\section{The resonance varieties of a graded algebra}
\label{sect:resvar}

\subsection{Resonance varieties}
\label{subsec:rv}
Let $A$ be a graded, graded commutative algebra over 
a field $\k$ of characteristic different from $2$.  
Throughout, we will assume that $A$ is non-negatively graded, 
that $A$ is of finite-type (i.e., each graded piece $A^i$ 
is finite-dimensional), and that $A$ is connected (i.e., 
$A^0=\k$, generated by the unit $1$).  We 
will write $b_i=b_i(A)$ for the Betti numbers of $A$, 
and we will generally assume that 
$b_1>0$, so as to avoid trivialities. 

By graded-commutativity of the product and the assumption that $\ch \k\ne 2$, 
each element $a\in A^1$ squares to zero. We thus obtain a cochain complex,
\begin{equation}
\label{eq:aomoto}
\xymatrix{(A , \delta_a)\colon  \ 
A^0\ar^(.66){\delta^0_a}[r] & A^1\ar^{\delta^1_a}[r] & A^2 \ar^{\delta^2_a}[r]  
& \cdots},
\end{equation}
with differentials $\delta^i_a(u)=a\cdot u$, for all $u\in A^i$.    
The {\em resonance varieties}\/ of $A$ (in degree $i\ge 0$ 
and depth $k\ge 0$) are defined as 
\begin{equation}
\label{eq:rvs}
\RR^i_k(A)=\{a \in A^1 \mid \dim_{\k} H^i(A,a) \ge k\}. 
\end{equation}

In other words, the resonance varieties record the locus of points $a$ 
in the affine space $A^1=\k^{b_1}$ where the `twisted' Betti numbers 
$b_i(A,a):=\dim_{\k} H^i(A,\delta_a)$ jump by at least $k$.   
We will allow at times $k\le 0$, 
in which case we will set  $\RR^i_k(A)=A^1$.  Clearly, the sets $\RR^i_k(A)$ 
are homogeneous subsets of $A^1$.  Here is a more concrete  
description of these sets, which follows at once from the definitions.

\begin{lemma}
\label{lem:res}
An element $a\in A^1$ belongs 
to $\RR^i_k(A)$ if and only if there exist 
$u_1,\dots ,u_k\in A^i$ such that  $au_1=\cdots =au_k=0$ 
in $A^{i+1}$, and the set $\{au,u_1,\dots ,u_k\}$ 
is linearly independent in $A^i$, for all $u\in A^{i-1}$.
\end{lemma}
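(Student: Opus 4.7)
The plan is to view the lemma as a direct unpacking of the definition \eqref{eq:rvs} of the resonance variety. By definition, $a \in \RR^i_k(A)$ precisely when $\dim_{\k} H^i(A,\delta_a) \ge k$, where $H^i(A,\delta_a) = \ker(\delta^i_a)/\im(\delta^{i-1}_a)$. This dimension condition is equivalent to the existence of $k$ cohomology classes in $H^i(A,\delta_a)$ that are linearly independent over $\k$; so the whole task is to translate such an existence statement into the chain-level language of the lemma.

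For the forward direction, I would choose linearly independent classes $[u_1],\dots,[u_k] \in H^i(A,\delta_a)$ and pick representatives $u_1,\dots,u_k \in A^i$. Since each $[u_j]$ is represented by a cocycle, $u_j \in \ker \delta^i_a$, giving $au_j = 0$. Linear independence of $[u_1],\dots,[u_k]$ in the quotient $\ker\delta^i_a/\im\delta^{i-1}_a$ means exactly that no nontrivial combination $\sum_{j=1}^k c_j u_j$ lies in $\im\delta^{i-1}_a = \{au : u \in A^{i-1}\}$. This is precisely the lemma's condition, which (interpreted properly) says that for every $u \in A^{i-1}$ there is no relation $\sum_{j=1}^k c_j u_j = \lambda\, au$ with $(c_1,\dots,c_k) \ne 0$; in other words, no element $\sum c_j u_j$ with $(c_1,\dots,c_k) \ne 0$ equals $au$ for any choice of $u$.

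For the converse, I would reverse the same translation: given $u_1,\dots,u_k \in A^i$ satisfying $au_1 = \cdots = au_k = 0$, the $u_j$'s lie in $\ker\delta^i_a$; and the stated independence of $\{au,u_1,\dots,u_k\}$ for every $u \in A^{i-1}$ guarantees that no nontrivial $\k$-linear combination of the $u_j$'s lies in $\im\delta^{i-1}_a$, whence the classes $[u_1],\dots,[u_k]$ are linearly independent in $H^i(A,\delta_a)$, yielding $\dim H^i(A,\delta_a) \ge k$.

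There is no real obstacle: this is a pure definition chase. The only subtle point is to interpret correctly the clause "the set $\{au,u_1,\dots,u_k\}$ is linearly independent for all $u \in A^{i-1}$" as encoding independence of the $u_j$'s modulo the subspace $\im\delta^{i-1}_a$, rather than the literal (and untenable) demand that $au,u_1,\dots,u_k$ be linearly independent as a $(k+1)$-element set for every $u$, which already fails at $u=0$.
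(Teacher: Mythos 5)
Your proof is correct and matches the paper's treatment: the paper offers no written argument for this lemma, stating only that it ``follows at once from the definitions,'' and your write-up is precisely that definition chase made explicit, identifying classes in $H^i(A,\delta_a)=\ker\delta^i_a/\im\delta^{i-1}_a$ with cocycle representatives. Your reading of the independence clause --- as linear independence of $u_1,\dots,u_k$ modulo the subspace $\im\delta^{i-1}_a$, rather than the literal (vacuously false at $u=0$) demand on a $(k+1)$-element set --- is the intended one.
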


Consequently, $\RR^i_{b_i}(A)=\{0\}$ and 
$\RR^i_k(A)= \emptyset$ for $k>b_i$; in particular, 
if $b_1=0$, then $\RR^1_{k}(A)=\emptyset$ for all $k\ge 1$. 
Moreover, for each $i \ge 0$, we have a descending filtration, 
\begin{equation}
\label{eq:resfilt}
A^1=\RR^i_0(A)\supseteq \RR^i_1(A)\supseteq \cdots 
\supseteq \RR^i_{b_i} (A)=\{0\}\supset \RR^i_{b_{i+1}} (A) =\emptyset.
\end{equation}
Therefore, 
\begin{equation}
\label{eq:bia}
b_i(A)=\max \big\{ k \mid 0\in \RR^i_k(A)\big\}\, .
\end{equation} 

\subsection{Isotropic subspaces}
\label{subsec:isotropic}
We say that a linear subspace $U\subset A^1$ is {\em isotropic}\/ 
if the restriction of the multiplication map 
$A^1\wedge A^1\to A^2$ to $U\wedge U$ is the zero map; 
that is, $ab=0$, for all $a,b\in U$.

\begin{lemma}
\label{lem:isotropic}
Let $A$ be a graded algebra as above.
\begin{enumerate}
\item \label{res1}
If $U\subseteq A^1$ is an isotropic subspace of dimension $k$, then 
$U\subseteq \RR^1_{k-1}(A)$. 

\item \label{res2}
$\RR^1_1(A)$ is the union of all isotropic planes 
in $A^1$. 
\end{enumerate}
\end{lemma}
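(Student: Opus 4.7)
The plan is to handle the two parts in order, deducing one inclusion of part (ii) from part (i) and obtaining the other via a direct linear-algebraic argument based on Lemma \ref{lem:res}.

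For part (i), I would fix an isotropic subspace $U \subseteq A^1$ of dimension $k$ and an element $a \in U$, and aim to exhibit $k-1$ linearly independent cohomology classes in $H^1(A, \delta_a)$. When $a = 0$ the differential $\delta_0$ vanishes, so $H^1(A,\delta_0) = A^1$ has dimension $b_1 \ge k > k-1$, and the conclusion is immediate. When $a \ne 0$, I would extend $a$ to a basis $a, u_1, \ldots, u_{k-1}$ of $U$. Isotropy gives $a u_j = 0$ for every $j$, placing each $u_j$ in $\ker \delta^1_a$. On the other hand, $\im \delta^0_a = \k\cdot a$, and by the linear independence of $a, u_1, \ldots, u_{k-1}$ this line meets $\spn(u_1, \ldots, u_{k-1})$ only at $0$. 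Thus the classes $[u_j] \in H^1(A,\delta_a)$ are linearly independent, so $a \in \RR^1_{k-1}(A)$ by Lemma \ref{lem:res}.

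For part (ii), the inclusion $\supseteq$ is the special case $k=2$ of part (i) applied to each isotropic plane. For the reverse inclusion, I would take a nonzero $a \in \RR^1_1(A)$ and use Lemma \ref{lem:res} to produce some $u \in A^1$ with $au = 0$ and $u \notin \k\cdot a$. Graded-commutativity together with $\ch \k \ne 2$ forces $a^2 = u^2 = 0$, so every product $(\alpha a + \beta u)(\alpha' a + \beta' u)$ expands to $0$. Hence $U := \spn\{a,u\}$ is an isotropic plane containing $a$. The point $a = 0$ lies in any such plane, which exists as soon as $\RR^1_1(A) \supsetneq \{0\}$.

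I do not anticipate any significant obstacle: the whole argument is a transparent translation between the cohomological jumping condition defining $\RR^1_k(A)$ and the vanishing of products in $A^1$. The only minor care needed is in checking the linear-independence conditions of Lemma \ref{lem:res} in the presence of the image of $\delta^0_a$, and in handling the trivial point $a=0$ separately.
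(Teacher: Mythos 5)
Your proof is correct and takes essentially the same approach as the paper: part (\ref{res1}) is the direct verification from the definitions (which the paper leaves implicit, and which you spell out carefully via the quotient by $\im\delta^0_a$), and part (\ref{res2}) combines part (\ref{res1}) with the observation that any $a\in\RR^1_1(A)$ together with a $b$ satisfying $ab=0$ and $b\notin\k\cdot a$ spans an isotropic plane, using $a^2=b^2=0$.
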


\begin{proof}
The first claim follows straight from the definitions. 
To prove claim \eqref{res2},  let $\mathcal{Q}(A)$ 
be the union of all isotropic planes in $A^1$. 
By claim \eqref{res1}, we have that $\mathcal{Q}(A)\subseteq \RR^1_1(A)$; 
it remains to establish the reverse inclusion. 

So let $a\in \RR^1_1(A)$; there is then a vector $b\in A^1$, 
not proportional to $a$, such that $ab=0$ in $A^2$. Let $U$ be the plane 
spanned by $a$ and $b$. Then $U$ is isotropic  
(if $\alpha=\lambda_1 a+\nu_1 b$ and $\beta=\lambda_2 a+\nu_2 b$ 
are two vectors in $U$, then clearly $\alpha\beta=0$), and we are done.
\end{proof}

\begin{remark}
\label{rem:falk}
The resonance varieties $\RR^1_1(A)$ were first considered by Falk \cite{Fa97} 
in the case when $A$ is the Orlik--Solomon algebra attached to a hyperplane 
arrangement and $\k=\C$.  It was noted in that paper that Lemma \ref{lem:isotropic} 
holds in that setting, while subsequent  work of Falk \cite{Fa07} highlighted and
 made use of the fact that these rulings by isotopic planes hold over 
fields $\k$ of arbitrary characteristic, even when $\RR^1_1(A)$ is not a union 
of linear subspaces, as is the case when $\ch(\k)=0$.
\end{remark}

\begin{remark}
\label{rem:char-res}
The resonance varieties of a graded algebra $A$ 
do not depend in an essential way on the field $\k$, but rather, just on its 
characteristic. More precisely, if $\k\subset \K$ is a field extension, then 
the $\k$-points on $\RR^i_k(A\otimes_\k \K)$ coincide with $\RR^i_k(A)$.  
Nonetheless, as we shall see in Example \ref{ex:sing-iso}, this subtle difference 
between the two varieties can be quite meaningful.
\end{remark}

\subsection{Resonance varieties of products}
\label{subsec:resprod}

One of the more pleasant properties of resonance varieties 
is the way they behave with respect to tensor products 
of graded algebras.  This topic is treated in various 
levels of generality in  \cite{PS-plms, PS-springer, SW-mz}. 
We summarize here the relevant result.

\begin{proposition}
\label{prop:resprod}
Let $A=B \otimes_{\k} C$ be the tensor product of two connected, 
finite-type graded $\k$-algebras. Then 
\begin{align*}
\RR^1_k(B \otimes_\k C)&=\RR^1_k(B)\times \{0\} \cup \{0\}\times \RR^1_k(C),\\
\RR^i_1(B \otimes_\k C)&=\bigcup\limits_{p\ge 0} \RR^p_1(B)\times  \RR^{i-p}_1(C), 
 \quad \textrm{ if } i\geq 2.
\end{align*}
\end{proposition}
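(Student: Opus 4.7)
The plan is to apply the K\"unneth formula to the Aomoto complex of the tensor-product algebra. The key preliminary step is to identify, for each $a=(b,c)\in B^1\oplus C^1=A^1$, the cochain complex $(A,\delta_a)$ with the tensor product of cochain complexes $(B,\delta_b)\otimes_\k (C,\delta_c)$. This is a direct verification from the graded-commutative multiplication on $B\otimes_\k C$: for $u\in B^p$ and $v\in C^q$,
$$\delta_a(u\otimes v)=(b\otimes 1+1\otimes c)(u\otimes v)=(\delta_b u)\otimes v+(-1)^p u\otimes (\delta_c v),$$
which is exactly the differential of the tensor product complex. Since we work over a field, the K\"unneth theorem then yields a natural isomorphism
$$H^i(A,\delta_a)\cong \bigoplus_{p+q=i} H^p(B,\delta_b)\otimes_\k H^q(C,\delta_c).$$

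Next I would record the boundary case in degree zero: because $\delta^0_b(1)=b$, the space $H^0(B,\delta_b)$ equals $\k$ when $b=0$ and vanishes otherwise, and symmetrically for $C$. Equivalently, $\RR^0_1(B)=\{0\}$, with an identical statement for $C$.

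With the K\"unneth decomposition in hand, both formulas drop out by unwinding definitions. For $i\ge 2$, $a=(b,c)$ lies in $\RR^i_1(A)$ iff some summand $H^p(B,\delta_b)\otimes H^{i-p}(C,\delta_c)$ is nonzero iff $b\in\RR^p_1(B)$ and $c\in\RR^{i-p}_1(C)$ for some $p\in\{0,\dots,i\}$; the endpoint contributions $p=0$ and $p=i$ force $b=0$ (respectively $c=0$) via $\RR^0_1=\{0\}$, so they produce the pieces $\{0\}\times\RR^i_1(C)$ and $\RR^i_1(B)\times\{0\}$. For $i=1$ only the two extremal K\"unneth terms survive,
$$H^1(A,\delta_a)\cong H^0(B,\delta_b)\otimes H^1(C,\delta_c)\ \oplus\ H^1(B,\delta_b)\otimes H^0(C,\delta_c),$$
so a case split on the vanishing of $b$ and $c$ shows that $H^1(A,\delta_a)=0$ whenever both are nonzero, while if $b=0$ (resp.\ $c=0$) the surviving summand computes $H^1(C,\delta_c)$ (resp.\ $H^1(B,\delta_b)$). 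Hence for $k\ge 1$, $(b,c)\in\RR^1_k(A)$ precisely when either $b=0$ and $c\in\RR^1_k(C)$, or $c=0$ and $b\in\RR^1_k(B)$, yielding the first formula.

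The only real obstacle is the identification $(A,\delta_a)\cong (B,\delta_b)\otimes_\k (C,\delta_c)$ as cochain complexes; once that is in place, the statement reduces to K\"unneth together with bookkeeping around the $H^0$-term.
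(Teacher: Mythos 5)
Your proposal is correct and takes essentially the same route as the paper: the published proof likewise rests on the single observation that $(A,\delta_a)$ splits as the tensor product of cochain complexes $(B,\delta_b)\otimes (C,\delta_c)$, so that $b_i(A,a)=\sum_{p+q=i}b_p(B,b)\,b_q(C,c)$ by K\"unneth, and then declares the formulas to follow. Your write-up simply supplies the bookkeeping (the verification of the differential and the identification $\RR^0_1=\{0\}$) that the paper leaves implicit.
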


\begin{proof}
As in \cite{PS-plms, SW-mz}, the claim easily follows from the 
following fact:  if $a = (b,c)$ is an element in $A^1=B^1\oplus C^1$, then 
the cochain complex $(A,a)$ splits as a tensor product of cochain 
complexes, $(B,b)\otimes (C,c)$, and thus 
$b_i(A,a)=\sum_{p+q=i} b_p(B,b)b_q(C,c)$. 
\end{proof}

\subsection{Naturality properties}
\label{subset:natural}

The resonance varieties enjoy several nice naturality 
properties with respect to morphisms of graded algebras.  
To describe some of these properties, we start with a Lemma/Definition, 
following the approach from \cite{DS18}, where a more general situation 
is studied.

\begin{lemma}
\label{lem:phistar}
Let $\varphi\colon A\to B$ be a morphism of graded $\k$-algebras. 
For each $a\in A^1$, there is an induced homomorphism 
\begin{equation}
\label{eq:phi_a}
\varphi_a\colon H^{\hdot}(A,\delta_a)\to 
H^{\hdot}(B,\delta_{\varphi(a)}).
\end{equation}
\end{lemma}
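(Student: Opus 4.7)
The plan is to verify that $\varphi\colon A\to B$, when viewed degree by degree, is a cochain map between the Aomoto complexes $(A,\delta_a)$ and $(B,\delta_{\varphi(a)})$, and then to invoke the standard fact that a cochain map induces a well-defined map on cohomology. In this sense, the statement is essentially a bookkeeping lemma, so there is no conceptual obstacle; the only thing to check is compatibility of $\varphi$ with the differentials.

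First, I would recall that $\varphi$ preserves grading, so for each $i\ge 0$ it restricts to a $\k$-linear map $\varphi^i\colon A^i\to B^i$, and it is multiplicative, meaning $\varphi(uv)=\varphi(u)\varphi(v)$ for all $u,v\in A$. Fix $a\in A^1$. Then for any $u\in A^i$,
\begin{equation*}
\varphi^{i+1}\bigl(\delta^i_a(u)\bigr)=\varphi(a\cdot u)=\varphi(a)\cdot\varphi(u)=\delta^i_{\varphi(a)}\bigl(\varphi^i(u)\bigr),
\end{equation*}
which is precisely the chain map identity $\varphi^{i+1}\circ\delta^i_a=\delta^i_{\varphi(a)}\circ\varphi^i$.

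Consequently, $\varphi$ carries $\ker\delta^i_a$ into $\ker\delta^i_{\varphi(a)}$ and $\im\delta^{i-1}_a$ into $\im\delta^{i-1}_{\varphi(a)}$, so passing to quotients yields well-defined $\k$-linear maps
\begin{equation*}
\varphi_a\colon H^i(A,\delta_a)\longrightarrow H^i(B,\delta_{\varphi(a)})
\end{equation*}
for each $i\ge 0$, assembling into the advertised homomorphism on cohomology. Since everything is determined by $\varphi$ and the formula $\delta_a(u)=au$, this assignment is functorial in $\varphi$ and natural in $a$, which is what the authors will likely exploit in subsequent arguments (for instance in describing how the loci $\RR^i_k$ transform under algebra morphisms). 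The main step, and the only nontrivial one, is the chain-map identity displayed above, and it is an immediate consequence of multiplicativity of $\varphi$.
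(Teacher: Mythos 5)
Your proof is correct and follows essentially the same route as the paper: the paper checks directly that cocycles go to cocycles (since $\varphi(a)\varphi(b)=0$ when $ab=0$) and coboundaries to coboundaries (since $\varphi(ac)=\varphi(a)\varphi(c)$), which is exactly the content of your chain-map identity $\varphi^{i+1}\circ\delta^i_a=\delta^i_{\varphi(a)}\circ\varphi^i$. Both arguments reduce to the multiplicativity of $\varphi$, so there is nothing to add.
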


\begin{proof}
Let $[b]\in H^i(A,a)$, 
represented by an element $b\in A^i$ such that $ab=0$ in $A^{i+1}$.  
Since  $\varphi(a)\varphi(b)=0$, we may define a map 
$\varphi_a $ from $H^{\hdot}(A,\delta_a)$ to $H^{\hdot}(B,\delta_{\varphi(a)})$ 
by sending $[b]$ to $[\varphi(b)]$.  To verify this map is well-defined, 
suppose $b=ac$, for some $c\in A^{i-1}$; then $\varphi(b)=\varphi(a)\varphi(c)$, 
and so $[\varphi(b)]=[\varphi(c)]$. 
\end{proof}

\begin{proposition}
\label{prop:functres}
Let $\varphi\colon A\to B$ be a morphism of graded 
algebras such that $\varphi^i\colon A^i\to B^i$ is injective and 
$\varphi^{i-1}$ is surjective, for some $i\ge 1$. Then 
\begin{enumerate}
\item \label{mr1}
The homomorphisms $\varphi^i_a\colon H^{i}(A,\delta_a)\to 
H^{i}(B,\delta_{\varphi(a)})$ are injective, for all $a\in A^1$.
\item \label{mr2}
Suppose further that the map $\varphi^1\colon A^1\to B^1$ is 
injective.  Then this map restricts to inclusions 
$\varphi^1\colon \RR^i_k(A)\inj  \RR^i_k(B)$, for all $k\ge 0$.
\end{enumerate}
\end{proposition}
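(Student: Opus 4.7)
The plan is to prove (1) by a straightforward diagram chase in the cochain complexes $(A,\delta_a)$ and $(B,\delta_{\varphi(a)})$, using the injectivity of $\varphi^i$ and surjectivity of $\varphi^{i-1}$ exactly once each; part (2) will then follow by unpacking the definition of $\RR^i_k$.

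For part (1), I would fix $a\in A^1$ and take a class $[b]\in H^i(A,\delta_a)$ in the kernel of $\varphi^i_a$. By Lemma \ref{lem:phistar}, this class is represented by some $b\in A^i$ with $ab=0$, and the vanishing of $[\varphi(b)]$ in $H^i(B,\delta_{\varphi(a)})$ produces an element $c\in B^{i-1}$ with $\varphi(b)=\varphi(a)\cdot c$. Since $\varphi^{i-1}$ is surjective, I can lift $c$ to some $c'\in A^{i-1}$ with $\varphi^{i-1}(c')=c$. Then
\begin{equation*}
\varphi^i(b)=\varphi(a)\cdot \varphi^{i-1}(c')=\varphi^i(ac'),
\end{equation*}
and injectivity of $\varphi^i$ forces $b=ac'$, so $[b]=0$ in $H^i(A,\delta_a)$. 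This shows $\varphi^i_a$ is injective.

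For part (2), the injective map from (1) gives $\dim H^i(A,\delta_a)\le \dim H^i(B,\delta_{\varphi(a)})$, so any $a\in \RR^i_k(A)$ satisfies $\varphi^1(a)\in \RR^i_k(B)$. The added hypothesis that $\varphi^1\colon A^1\to B^1$ is injective then upgrades the map $\RR^i_k(A)\to \RR^i_k(B)$ induced by $\varphi^1$ to a genuine set-theoretic inclusion.

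The argument is essentially formal and has no real obstacle; the only care needed is in the bookkeeping of degrees. Specifically, surjectivity is required precisely in degree $i-1$ in order to lift the putative coboundary witness back to $A$, and injectivity is required precisely in degree $i$ to transfer the identity $b=ac'$ from $B$ back to $A$. These are exactly the hypotheses provided, and both are used in an essential way.
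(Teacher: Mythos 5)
Your proof is correct and follows essentially the same argument as the paper: a diagram chase using surjectivity of $\varphi^{i-1}$ to lift the coboundary witness and injectivity of $\varphi^i$ to conclude $b=ac'$, with part (2) then following from the definition of the resonance varieties. No issues.
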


\begin{proof}
To prove part \eqref{mr1}, suppose that $\varphi^i_a ([b])=0$, 
for some $b\in A^i$.  Then 
$\varphi^i(b)=\varphi^1(a)v$, for some $v\in B^{i-1}$. By 
our surjectivity assumption on $\varphi^{i-1}$, there is an 
element $u\in A^{i-1}$ such that $\varphi^{i-1}(u)=v$, 
and so $\varphi^i(b)=\varphi^i(av)$.  Our injectivity 
assumption on $\varphi^{i}$ now implies that  $b=av$, and 
so $[b]=0$.

Part \eqref{mr2} follows at once from part \eqref{mr1} and the 
definition of resonance varieties.
\end{proof}

As a particular case, we recover a result from \cite{PS-mathann, Su-toulouse}.

\begin{corollary}
\label{cor:functorial resonance}
Let $\varphi\colon A\to B$ be a morphism of graded, connected 
algebras.  If the map  $\varphi^1\colon A^1 \to B^1$ is injective, then 
$\varphi^1(\RR^1_k(A))\subseteq  \RR^1_k(B)$, for all $k\ge 0$.
\end{corollary}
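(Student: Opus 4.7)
The plan is to deduce the corollary directly from Proposition \ref{prop:functres} by checking that its hypotheses hold at index $i = 1$. The only condition not already given by the statement is that $\varphi^0 \colon A^0 \to B^0$ be surjective. But since both $A$ and $B$ are connected, we have $A^0 = B^0 = \k$, and any morphism of graded $\k$-algebras is required to send $1$ to $1$; hence $\varphi^0$ is forced to be the identity on $\k$, and in particular is surjective.

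With the given injectivity of $\varphi^1$, both hypotheses of Proposition \ref{prop:functres} are now met at $i = 1$. Part (ii) of that proposition then yields inclusions $\varphi^1 \colon \RR^1_k(A) \inj \RR^1_k(B)$ for every $k \ge 0$, which is exactly the stated conclusion $\varphi^1(\RR^1_k(A)) \subseteq \RR^1_k(B)$.

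There is no real obstacle here; the corollary is simply a reinterpretation of Proposition \ref{prop:functres} in its lowest-degree case, with connectedness of $A$ and $B$ silently supplying the only missing hypothesis. If one wanted a fully self-contained argument, one could instead run directly through Lemma \ref{lem:res}: given $a \in \RR^1_k(A)$ witnessed by $u_1, \dots, u_k \in A^1$, the images $\varphi^1(u_1), \dots, \varphi^1(u_k)$ are annihilated by $\varphi^1(a)$, and injectivity of $\varphi^1$ preserves the required linear-independence condition modulo the one-dimensional coboundary image $\k \cdot \varphi^1(a)$ (which is where surjectivity of $\varphi^0$ is implicitly used).
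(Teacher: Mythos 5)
Your proposal is correct and matches the paper's intent exactly: the corollary is stated there as "a particular case" of Proposition \ref{prop:functres}, and the only detail to supply is that connectedness forces $\varphi^0\colon\k\to\k$ to be the identity, hence surjective, so part \eqref{mr2} applies with $i=1$. Your argument fills in precisely that specialization, so it is essentially the same proof.
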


It follows that the resonance varieties of a graded, connected algebra $A$ depend 
only on the isomorphism type of $A$.  More precisely, if $\varphi \colon A\isom B$ 
is an isomorphism between two such algebras, then the linear isomorphism 
$\varphi^1\colon A^1\isom B^1$ restricts to isomorphisms 
$\varphi^1\colon \RR^i_k(A) \isom \RR^i_k(B)$ for all $k\ge 0$. 

In general, though, even if $\varphi\colon A\to B$ is  
an injective morphism between two graded algebras, the set 
$\varphi^1(\RR^i_k(A))$ may not be included in $\RR^i_k(B)$, 
for some $i>1$ and $k>0$.

\begin{example}
\label{ex:notinc}
Let $f\colon S^1\times S^1\to S^1\vee S^2$ be the map obtained 
(up to homotopy) by pinching a meridian circle of the torus to a point, 
and let $\varphi\colon A\to B$ be the induced morphism between 
the respective cohomology $\k$-algebras. It is readily seen that 
$\varphi$ is injective, yet $\RR^2_1(A)=\k$, whereas $\RR^2_1(B)=\{0\}$.
\end{example}

\section{Resonance and the BGG correspondence}
\label{sect:bgg}

In this section we explain how the BGG correspondence can be used 
to find equations for the resonance varieties of a graded algebra, and 
discuss the behavior of these varieties under coproducts, and under 
injective morphisms of algebras. 

\subsection{Equations for the resonance varieties}
\label{subsec:eqresvar}
Once again, let $A$ be a connected, finite-type cga  
over a field $\k$.  Without essential loss of generality, we will 
assume that $n:=b_1(A)$ is at least $1$. Let us pick a basis 
$\{ e_1,\dots, e_n \}$ for the $\k$-vector space 
$A^1$, and let $\{ x_1,\dots, x_n \}$ be the Kronecker 
dual basis for the dual vector space $A_1=(A^1)^*$.  
These choices allow us to identify the symmetric algebra $\Sym(A_1)$ 
with the polynomial ring $S=\k[x_1,\dots, x_n]$. 

The Bernstein--Bernstein--Gelfand correspondence (see for 
instance \cite[\S7B]{Ei}) yields a cochain complex of finitely generated,
 free $S$-modules, 
\begin{equation}
\label{eq:univ aomoto}
\xymatrixcolsep{20pt}
\bL(A)=(A\otimes_{\k} S,\delta)\colon 
\xymatrix{
\cdots \ar[r] 
&A^{i-1}\otimes_{\k} S \ar^(.55){\delta^{i-1}_A}[r] 
&A^{i} \otimes_{\k} S \ar^(.5){\delta^{i}_A}[r] 
&A^{i+1} \otimes_{\k} S \ar[r] 
& \cdots},
\end{equation}
with differentials given by $\delta^{i}_A(u \otimes 1)= \sum_{j=1}^{n} 
e_j u \otimes x_j$ for $u\in A^{i}$. By construction, the matrices associated 
to these differentials have entries that are linear forms in the variables of $S$.    

It is readily verified that the evaluation of the cochain complex $\bL(A)$ 
at an element $a\in A^1$ coincides with the cochain complex $(A,\delta_a)$ 
from \eqref{eq:aomoto}, that is to say, $\left.\delta^i_A\right|_{x_j=a_j} = \delta^i_a$. 
By definition, an element $a \in A^1$ belongs to $\RR^i_k(A)$ 
if and only if 
\begin{equation}
\label{eq:rank delta}
\rank \delta^{i-1}_a + \rank \delta^{i}_a \le b_i(A) -k,
\end{equation}
where recall $b_i(A)=\dim_{\k} A^i$. 
Let $I_r(\psi)$ denote the ideal of $r\times r$ minors 
of a $p\times q$ matrix $\psi$ with entries in $S$, with 
the convention that $I_0(\psi)=S$ and $I_r(\psi)=0$ if 
$r>\min(p,q)$.  Using the well-known fact that 
$I_r (\phi\oplus \psi)= \sum_{s+t=r} I_s(\phi ) \cdot  I_{t}(\psi)$,
we infer that 
\begin{equation}
\begin{aligned}
\label{eq:rika}
\RR^i_k(A)&= V \Big( I_{b_{i}(A)-k+1} \big(\delta^{i-1}_A\oplus \delta^{i}_A\big) \Big)\\ 
&=\bigcap_{s+t=b_{i}(A)-k+1} \Big( V\big(I_s(\delta^{i-1}_A)\big) 
\cup V\big(I_{t}(\delta^{i}_A)\big)\Big).
\end{aligned}
\end{equation}

The degree $1$ resonance varieties admit an even simpler description.  
Clearly, the map $\delta^0_A\colon S\to S^n$ has matrix $\big( x_1 \cdots x_n\big)$, 
and so $V\big(I_1(\delta^{0}_A)\big)=\{0\}$; hence, 
\begin{equation}
\label{eq:r1ka}
\RR^1_k(A)=  V ( I_{n-k} (\delta^1_A) )
\end{equation}
for $0\le k<n$ and $\RR^1_n(A)=\{0\}$. 

\begin{remark}
\label{rem:scheme}
It is sometimes useful to consider the {\em resonance schemes}\/ 
$\mathbfcal{R}^i_k(A)$ of a graded algebra $A$ as above.  These schemes 
are defined by the ideals $I_{b_{i}(A)-k+1} \big(\delta^{i-1}_A\oplus \delta^{i}_A\big)$ 
from \eqref{eq:rika}, and have as underlying 
sets the resonance varieties $\RR^i_k(A)$.
\end{remark}

\subsection{Induced morphisms in cohomology}
\label{subsec:ind}

Given an arbitrary morphism $\varphi\colon A\to B$ of connected, 
finite-type graded $\k$-algebras, it is not clear how to define an 
induced chain map, $\bL(\varphi)\colon \bL(A)\to \bL(B)$. 
Nevertheless, when $\varphi$ is injective, this can be 
done (after making some non-canonical choices), following the 
approach from \cite{DS18}. 

Since each map $\varphi^i\colon A^i\inj B^i$ is injective, 
the $\k$-dual map, $\varphi_i\colon B_i\surj A_i$, 
is surjective.   
Let $\psi_i\colon A_i\inj B_i$ be a $\k$-linear splitting of 
$\varphi_i$, so that $\varphi_i\circ \psi_i=\id_{A_i}$.

\begin{lemma}
The map of $S$-modules $\bL(\varphi)\colon \bL(A)\to \bL(B)$ defined by 
\begin{equation}
\label{eq:chain bgg}
\begin{gathered}
\xymatrixcolsep{20pt}
\xymatrix{
 \bL(A):\ar^{\bL(\varphi)}[d]  \hspace*{-20pt} 
 &A^0 \otimes_{\k} \Sym(A_1)  \ar[r]^(.5){\delta^0_A} 
\ar^{\varphi^{0}\otimes \Sym(\psi_1)}[d]
&  A^1 \otimes_{\k} \Sym(A_1)   \ar[r]^(.5){\delta^1_A}   
\ar^{\varphi^{1}\otimes \Sym(\psi_1)}[d]
&  A^2 \otimes_{\k} \Sym(A_1) \ar[r]    
\ar^{\varphi^{2}\otimes \Sym(\psi_1)}[d]
& \cdots  \phantom{.}\\
 \bL(B) : \hspace*{-20pt} &B^0 \otimes_{\k} \Sym(B_1)  \ar[r]^(.5){\delta^0_B} 
&  B^1 \otimes_{\k} \Sym(B_1)   \ar[r]^(.5){\delta^1_B}  
&  B^2 \otimes_{\k} \Sym(B_1)  \ar[r] 
&  \cdots 
}
\end{gathered}
\end{equation}
is a chain map.
\end{lemma}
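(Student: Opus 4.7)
The plan is to verify the chain-map identity $\delta^i_B\circ (\varphi^i\otimes\Sym(\psi_1))=(\varphi^{i+1}\otimes\Sym(\psi_1))\circ\delta^i_A$ for each $i\ge 0$. Since both composites are morphisms of $\Sym(A_1)$-modules (with $\bL(B)$ viewed as a $\Sym(A_1)$-module via the ring homomorphism $\Sym(\psi_1)\colon \Sym(A_1)\to \Sym(B_1)$), it suffices to check the equality on the generators $u\otimes 1$ with $u\in A^i$.

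First I would unpack both composites on such a generator. Using the BGG formula $\delta(u\otimes 1)=\sum_j e_j u\otimes x_j$ on the $A$-side and $\delta(v\otimes 1)=\sum_j f_j v\otimes y_j$ on the $B$-side, the identity to be checked becomes
\[ \sum_{j=1}^{m} f_j\,\varphi^i(u)\otimes y_j \;=\; \sum_{k=1}^{n} \varphi^1(e_k)\,\varphi^i(u)\otimes \psi_1(x_k) \]
in $B^{i+1}\otimes_{\k}\Sym(B_1)$, where $\{e_k,x_k\}$ and $\{f_j,y_j\}$ are the chosen Kronecker-dual bases for $(A^1, A_1)$ and $(B^1, B_1)$ respectively. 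The left-hand side is the multiplication of the canonical element $\rho_B:=\sum_j f_j\otimes y_j\in B^1\otimes B_1\subset B\otimes \Sym(B_1)$ with $\varphi^i(u)\otimes 1$, while the right-hand side is the analogous multiplication of $(\varphi^1\otimes \psi_1)(\rho_A)$ with the same element, where $\rho_A:=\sum_k e_k\otimes x_k\in A^1\otimes A_1$ is the corresponding canonical element on the $A$-side. Under the identification $B^1\otimes B_1\cong \Hom_{\k}(B^1, B^1)$, the element $\rho_B$ corresponds to $\id_{B^1}$, and the splitting condition $\varphi_1\circ \psi_1=\id_{A_1}$ forces the endomorphism represented by $(\varphi^1\otimes \psi_1)(\rho_A)$ to restrict to the identity on the subspace $\varphi^1(A^1)\subseteq B^1$.

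The main step, and the genuine obstacle, is to show that the residual difference $\rho_B-(\varphi^1\otimes \psi_1)(\rho_A)\in B^1\otimes B_1$, which vanishes on $\varphi^1(A^1)$ when viewed as an endomorphism of $B^1$, is annihilated in $B^{i+1}\otimes \Sym(B_1)$ by right multiplication with $\varphi^i(u)\otimes 1$ for every $u\in A^i$. Choosing bases compatibly, so that $f_k=\varphi^1(e_k)$ for $1\le k\le n$ and extending to a full basis of $B^1$ by elements of a complement of $\varphi^1(A^1)$, this residue can be rewritten as a sum indexed only by $j>n$; the cancellation then hinges on the compatibility between $\varphi$ and the multiplicative structures of $A$ and $B$, as in the construction of \cite{DS18}. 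This is precisely the place where the non-canonical choices alluded to before the lemma are essential.

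Once this cancellation is verified on generators, $\Sym(A_1)$-linearity of both composites propagates the identity to all of $A^i\otimes \Sym(A_1)$, and running the argument for every $i\ge 0$ shows that the vertical maps of the diagram commute with the differentials of $\bL(A)$ and $\bL(B)$, as claimed.
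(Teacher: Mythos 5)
You set up the correct identity to verify---both composites are $\Sym(A_1)$-linear, so it suffices to check
\[
\sum_{j} f_j\,\varphi^i(u)\otimes y_j \;=\; \sum_{k} \varphi^1(e_k)\,\varphi^i(u)\otimes \psi_1(x_k)
\]
on generators $u\otimes 1$---and you correctly locate the crux in the residual $\rho_B-(\varphi^1\otimes\psi_1)(\rho_A)$. But the argument stops exactly there: you assert that this residual is annihilated by multiplication with $\varphi^i(u)\otimes 1$ because of ``the compatibility between $\varphi$ and the multiplicative structures of $A$ and $B$,'' deferring to \cite{DS18}, without proving anything. That appeal cannot carry the weight. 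Already for $i=0$ the element being multiplied is $1\otimes 1$, so no products in $A$ or $B$ enter at all, and multiplicativity of $\varphi$ says nothing about whether the terms $\sum_{j>n} f_j\otimes y_j$ survive; what is needed is a statement about how the chosen bases and the chosen splitting $\psi_1$ interact, not about the ring structure. As written, the main step of your proof is missing.

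The paper closes the computation by making the non-canonical choices do all the work: it picks bases of $A^1$ and $B^1$ adapted to $\varphi^1$ (so that $\varphi^1(e_j)=f_j$ or $0$) and takes the splitting $\psi_1$ Kronecker-dual to that choice, so that $\psi_1(x_j)=y_j$; with these choices the two sums above coincide term by term once the terms with $\varphi^1(e_j)=0$ are discarded, and there is no residual left to cancel. In other words, the diagram commutes because the splitting is chosen compatibly with the bases, not as a consequence of any multiplicative compatibility. To repair your write-up you should make these coordinated choices explicit and verify directly that the index sets match, rather than leaving the cancellation to an external reference.
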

 
\begin{proof}
Pick bases 
$\{ e_1,\dots, e_n \}$ for $A^1$ and $\{ f_1,\dots, f_p\}$ for $B^1$ 
so that $\varphi^1(e_j)=f_j$ for $j\le p$ and $\varphi^1(e_j)=0$, 
otherwise.  Letting $\{ x_1,\dots, x_n \}$ and $\{ y_1,\dots, y_p\}$ 
be the dual bases for $A_1$ and $B_1$, respectively, we find that
\begin{align*}
(\varphi^{i+1}  \otimes \Sym(\psi_1) )\circ \delta^{i}_A (u\otimes 1)
&= \varphi^{i+1}  \otimes \Sym(\psi_1) \bigg( \sum_{j=1}^{n} 
e_j u \otimes x_j \bigg) \\
&=  \sum_{j=1}^{n}  \varphi^1(e_j) \varphi^i(u)  \otimes \psi_1(x_j) \\
&=  \sum_{j=1}^{p}  f_j \varphi^{i} (u)  \otimes y_j \\
&= \delta^{i}_B ( \varphi^{i} (u) \otimes 1)  \\
&= \delta^{i}_B  \circ (\varphi^{i}  \otimes \Sym(\psi_1)) (u\otimes 1),
\end{align*}
thus verifying our claim.  
\end{proof}

The chain map defined above induces a morphism 
in cohomology, $\bL(\varphi)^*\colon H^{\hdot}(\bL(A))\to H^{\hdot}(\bL(B))$. 
The next proposition follows at once.

\begin{proposition}
For each $i\ge 0$, the evaluation of the morphism 
$\bL(\varphi)^*\colon H^{i}(\bL(A))\to H^{i}(\bL(B))$
at a point $a\in A^1$ yields the map 
$\varphi^i_a\colon H^i(A,\delta_a)\to H^i(B,\delta_{\varphi(a)})$ 
from \eqref{eq:phi_a}.
\end{proposition}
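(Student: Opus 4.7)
The plan is to unwind the evaluation map and check that the diagram \eqref{eq:chain bgg} specializes, at each point $a \in A^1$, to the very chain map $\varphi \colon (A,\delta_a) \to (B,\delta_{\varphi(a)})$ whose induced map on cohomology is $\varphi^i_a$ by the construction in Lemma \ref{lem:phistar}. Recall that a point $a \in A^1 = (A_1)^*$ defines a ring homomorphism $\ev_a \colon S = \Sym(A_1) \to \k$ sending $x_j \mapsto a_j$, and that specializing $\bL(A)$ along $\ev_a$ yields precisely $(A,\delta_a)$, as noted right after \eqref{eq:univ aomoto}; likewise, $\ev_{\varphi(a)}$ specializes $\bL(B)$ to $(B,\delta_{\varphi(a)})$.

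The only substantive point to verify is the compatibility identity
\begin{equation*}
\ev_{\varphi(a)} \circ \Sym(\psi_1) = \ev_a
\end{equation*}
of $\k$-algebra maps $S \to \k$. It suffices to check this on the generators $x_j \in A_1$. Using that $(\varphi^1)^* = \varphi_1$ by definition, together with the splitting condition $\varphi_1 \circ \psi_1 = \id_{A_1}$, one computes
\begin{equation*}
\ev_{\varphi(a)}\bigl(\psi_1(x_j)\bigr) = \psi_1(x_j)\bigl(\varphi^1(a)\bigr) = \bigl(\varphi_1 \circ \psi_1\bigr)(x_j)(a) = x_j(a) = a_j = \ev_a(x_j).
\end{equation*}

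With this compatibility in hand, applying $\ev_a$ to the top row and $\ev_{\varphi(a)}$ to the bottom row of \eqref{eq:chain bgg} collapses each vertical arrow $\varphi^i \otimes \Sym(\psi_1)$ to the cochain map $\varphi^i \colon A^i \to B^i$, producing the commutative diagram at the chain level whose passage to cohomology in degree $i$ is, by the very definition given in Lemma \ref{lem:phistar}, the map $\varphi^i_a$. The only mild obstacle is the duality bookkeeping in the identity above; once that is pinned down, the result follows from chasing the diagram.
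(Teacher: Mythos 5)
Your argument is correct and is exactly the unwinding the paper intends: the paper offers no proof (it states the proposition "follows at once" from the chain map construction), and the one genuinely non-obvious point — the compatibility $\ev_{\varphi(a)}\circ \Sym(\psi_1)=\ev_a$, which holds because $\varphi_1\circ\psi_1=\id_{A_1}$ — is precisely what you verify. With that identity, specializing the diagram \eqref{eq:chain bgg} at $a$ collapses the vertical maps to $\varphi^i\colon A^i\to B^i$ and the induced map on cohomology is $\varphi^i_a$ by the definition in Lemma \ref{lem:phistar}, as you say.
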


\subsection{Resonance varieties of coproducts}
\label{subsec:rescoprod}

Let $B$ and $C$ be 
two connected cga's.  Their wedge sum, $B\vee C$, is a 
new connected cga, whose underlying graded vector space in 
positive degrees is $B^+\oplus C^+$, with multiplication 
$(b,c)\cdot (b',c') = (bb', cc')$.  
The next proposition sharpens results from 
\cite{PS-plms, SW-mz}.  Since this is a new proof, and since we 
will use the same approach to prove Theorem \ref{thm:nonmax} below, 
we give complete details. 
 
\begin{proposition}
\label{prop:rescoprod}
Let $A=B \vee C$ be the wedge sum of two connected, finite-type 
graded $\k$-algebras with $b_1(B)>0$ and $b_1(C)>0$. 
Identifying $A^1=B^1\oplus C^1$, we have
\[
\RR^i_k(A)=
\begin{cases}
\ \bigcup\limits_{s+t=k-1} \RR^1_s(B) \times \RR^1_t(C)
&\quad \textrm{if $i=1$}, 
\\[3pt]
\hspace{6pt} \bigcup\limits_{s+t=k}\hspace{6pt} 
\RR^i_s(B)\times  \RR^i_t(C)
&\quad \textrm{if $i\ge 2$}.
\end{cases}
\]
\end{proposition}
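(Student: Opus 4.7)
The plan is to use the BGG description from Section~\ref{subsec:eqresvar} together with the block-diagonal structure of $\bL(A)$ induced by the wedge decomposition $A = B \vee C$. Because $B^{+} \cdot C^{+} = 0$ in $A$, setting $S = S_A = \k[x_1,\ldots,x_{b_1(B)},y_1,\ldots,y_{b_1(C)}]$ with the $x$'s dual to a basis of $B^1$ and the $y$'s dual to a basis of $C^1$, the differentials of $\bL(A)$ in positive degrees are block-diagonal:
\[
\delta^i_A \;=\; \delta^i_B \oplus \delta^i_C \colon (B^i \oplus C^i) \otimes S \longrightarrow (B^{i+1} \oplus C^{i+1}) \otimes S,\qquad i \geq 1,
\]
where $\delta^i_B$ involves only the $x$-variables and $\delta^i_C$ only the $y$-variables. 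In degree zero, however, $\delta^0_A\colon S \to A^1 \otimes S$ is the single column with entries $x_1,\ldots,x_{b_1(B)},y_1,\ldots,y_{b_1(C)}$, i.e.\ the ``diagonal'' of $\delta^0_B$ and $\delta^0_C$ rather than their direct sum.

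For $i \geq 2$ the matrices $\delta^{i-1}_A$ and $\delta^i_A$ are both block-diagonal, so after reordering rows and columns their direct sum splits as $(\delta^{i-1}_B \oplus \delta^i_B) \oplus (\delta^{i-1}_C \oplus \delta^i_C)$. I would then apply formula~\eqref{eq:rika} to write $\RR^i_k(A) = V(I_{b_i(A)-k+1}(\delta^{i-1}_A \oplus \delta^i_A))$ and expand the minor ideal via the multiplicativity relation $I_r(\phi \oplus \psi) = \sum_{s+t=r} I_s(\phi)\cdot I_t(\psi)$. The set-theoretic identities $V(I+J) = V(I) \cap V(J)$ and $V(I\cdot J) = V(I) \cup V(J)$ convert the resulting expression into an intersection of unions; applying \eqref{eq:rika} separately to $B$ and $C$ now identifies each factor as a resonance variety of $B$ or $C$, and careful bookkeeping of indices yields $\RR^i_k(A) = \bigcup_{s+t=k} \RR^i_s(B) \times \RR^i_t(C)$.

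For $i = 1$, the ``diagonal'' form of $\delta^0_A$ introduces an extra coupling. Its minor ideals satisfy $I_1(\delta^0_A) = \m$ (the maximal ideal at the origin of $A^1$) and $I_p(\delta^0_A) = 0$ for $p \geq 2$, so
\[
I_r\bigl(\delta^0_A \oplus \delta^1_A\bigr) \;=\; I_r(\delta^1_A) \,+\, \m \cdot I_{r-1}(\delta^1_A).
\]
Substituting the block expansions $I_r(\delta^1_A) = \sum_{u+v=r} I_u(\delta^1_B)\, I_v(\delta^1_C)$ and the analogous expansion for $I_{r-1}(\delta^1_A)$ and then analyzing the resulting vanishing loci with the same set-theoretic identities as above produces the shift from $s+t=k$ to $s+t=k-1$ in the stated formula. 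The hypothesis $b_1(B), b_1(C) > 0$ serves to control the boundary of the index range and avoid degenerate cases.

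The main obstacle is the degree-$1$ case: one has to track how the extra $\m$-factor arising from the ``diagonal'' shape of $\delta^0_A$ combines with the block-diagonal structure of $\delta^1_A$ so as to reproduce precisely the claimed index shift and union. This is exactly the same mechanism that will be needed in the proof of Theorem~\ref{thm:nonmax}, where two distinct shifts (by $r$ and by $r-1$) arise from a similar analysis of minor ideals, which is why it is worth working out the bookkeeping carefully here.
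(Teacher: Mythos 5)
Your proposal is correct and follows essentially the same route as the paper: the block-diagonality of $\delta^i_A$ for $i\ge 1$ coming from $B^+\cdot C^+=0$, the multiplicativity of minor ideals under block sums, and a final set-theoretic conversion of the resulting intersection of unions into a union of products using the descending resonance filtrations. The only cosmetic difference is that for $i=1$ you handle the non-block-diagonal contribution of $\delta^0_A$ explicitly via $I_r(\delta^0_A\oplus\delta^1_A)=I_r(\delta^1_A)+\m\cdot I_{r-1}(\delta^1_A)$, whereas the paper simply invokes the already-simplified formula \eqref{eq:r1ka}; these amount to the same computation.
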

\begin{proof}
Note that $\bL(A)^+ = \bL(B)^+ \oplus \bL(C)^+$.  
Thus, for $i>0$ the matrix of $\delta_A^i$ is the block sum of the matrices 
of $\delta_B^i$ and $\delta_C^i$, and so 
$I_r (\delta_A^i )= \sum_{s+t=r} I_s(\delta_B^i ) \cdot  I_{t}(\delta_C^i)$, 
where $I_s(\delta_B^i )$ and $I_t(\delta_C^i)$ are viewed as ideals 
of $S=\Sym(A_1)$ by extension of scalars.  When $i=1$, we get 
\begin{align*}
\RR^1_k(A)&=V \big( I_{b_{1}(A)-k} (\delta^{1}_A) \big)
\\
&=V \big( I_{b_{1}(A)-k} (\delta^{1}_B\oplus \delta^{1}_C \big)
\\
&=V \Big( \sum_{s+t=b_{1}(A)-k}   
I_{s} (\delta^{1}_B) \cdot I_t( \delta^{1}_C) \Big)
\\
&=\bigcap_{s+t=b_{1}(A)-k} \Big( V\big(I_{s} (\delta^{1}_B )\big) \cup 
V\big(I_{t}(\delta^{1}_C) \big)\Big)
\\
&=\bigcap_{u+v=k} \Big( V\big(I_{b_1(B)-u} (\delta^{1}_B )\big) \cup 
V\big(I_{b_1(C)-v}(\delta^{1}_C) \big)\Big)
\\
& = \bigcap_{u+v=k} \Big(\big(\RR^1_u(B)\times C^1\big) \cup 
\big(B^1\times \RR^1_v(C)\big)\Big)\\
&= \bigcup\limits_{s+t=k-1} \RR^1_s(B) \times \RR^1_t(C)
\end{align*}
where the last step is set-theoretical, based solely on the resonance 
filtrations \eqref{eq:resfilt} for the algebras $B$ and $C$. 
The proof for the case $i>1$ is similar. 
\end{proof}

\section{Poincar\'{e} duality algebras and alternating forms}
\label{sect:pd-res}

In this section we consider  a restricted class of graded algebras 
which abstract the notion of Poincar\'{e} duality for closed, oriented 
topological manifolds, and we discuss the alternating form naturally 
associated to such an algebra. 

\subsection{Poincar\'{e} duality}
\label{subsec:def pd}
Let $A$ be a non-negatively graded, 
graded-commutative algebra over a field $\k$.  
We will assume throughout that $A$ is connected 
and locally finite.  We say that $A$ is a 
{\em Poincar\'{e} duality $\k$-algebra}\/ of 
formal dimension $m$ if there is a $\k$-linear 
map $\varepsilon\colon A^m \to \k$ (called an 
{\em orientation}) such that all the bilinear forms 
\begin{equation}
\label{eq:duality}
A^i \otimes_{\k} A^{m-i}\to \k, \quad a\otimes b\mapsto \varepsilon (ab)
\end{equation}
are non-singular.  It follows  $\varepsilon$ is an isomorphism, 
and that $A^i=0$ for $i>m$.  Furthermore, for each $0\le i \le m$, 
there is an isomorphism
\begin{equation}
\label{eq:pd}
\PD^i\colon A^{i}\to (A^{m-i})^*, \quad  \PD^i(a)(b)=\varepsilon (ab).
\end{equation}

Consequently, each element $a\in A^i$ has a ``Poincar\'{e} 
dual," $a^{\vee}\in A^{m-i}$, which is uniquely determined by the 
formula $\varepsilon (a a^{\vee})=1$. 
We define the orientation class $\omega_A\in A^m$ as the 
Poincar\'{e} dual of $1\in A^0$, that is, $\omega_A=1^{\vee}$. 
Conversely, a choice of orientation 
class $\omega_A\in A^m$ defines an orientation 
$\varepsilon\colon A^m \to \k$ by setting $\varepsilon(\omega_A)=1$.

In more algebraic terms, a $\PD_m$ algebra is a graded, 
graded-commutative Gorenstein Artin algebra of socle degree $m$.

The main motivation for these definitions comes from topology: 
if $M$ is a compact, connected, orientable, $m$-dimensional 
manifold, then, by Poincar\'{e} duality, the cohomology algebra 
$A=H^{\hdot}(M,\k)$ is a $\PD_m$ algebra over $\k$, with the 
orientation class $[M]\in H_m(M,\k)$ determining the orientation 
on $A$ by setting $\omega_A([M])=1$.

\subsection{Tensor products and connected sums}
\label{subsec:prod-sums}
The class of Poincar\'{e} duality algebras is closed under 
taking tensor products and connected sums.  

Indeed, if $A$ and $B$ are Poincar\'{e} duality algebras of 
dimension $m$ and $n$, respectively, then their tensor product, 
$A\otimes_\k B$, is a Poincar\'{e} duality algebra of dimension 
$m+n$.   Conversely, if the tensor product of two graded algebras is 
a $\PD$ algebra, then each factor must be a $\PD$ algebra,   
see for instance \cite[p. 188]{Hal} or \cite[Prop.~3.3]{SS10}.  

Now let $A$ and $B$ be two $\PD_m$ algebras, with orientation 
classes $\omega_A$ and $\omega_B$, respectively.  Much as in 
\cite{MeS}, let us define their {\em connected sum}, $C=A\# B$, 
as the pushout 
\begin{equation}
\label{eq:pushout}
\begin{gathered}
\xymatrix{
\bwedge (\omega) \ar^-{\omega\mapsto \omega_A}[r] 
\ar[d]_{\substack{\omega\\\downmapsto\\\omega_B}} 
& A\ar[d] 
\\
B \ar[r] 
& A \# B
}
\end{gathered}
\end{equation}

In other words, $C^0=\k\cdot 1$, 
$C^i=A^i \oplus B^i$ for $0<i<m$, and $C^m=\k \cdot \omega_C$, 
with $\omega_A$ and $\omega_B$ identified to $\omega_C$, 
and with multiplication defined in the obvious way. 

The motivation and terminology for the above notions comes 
from manifold topology.  Indeed, if $M$ and $N$ are two closed, 
oriented manifolds, then $M\times N$ is again a closed, oriented 
manifold, and $H^{\hdot}(M\times N,\k)\cong 
H^{\hdot}(M,\k) \otimes_{\k} H^{\hdot}(N,\k)$. Moreover, the 
cohomology algebra of the connected sum of two closed, 
oriented manifolds of the same dimension is the connected 
sum of the respective cohomology algebras, that is, 
$H^{\hdot}(M\# N,\k)\cong H^{\hdot}(M,\k) \#H^{\hdot}(N,\k)$. 

\subsection{The alternating form of a $\PD_m$ algebra}
\label{subsec:alt-forms}
Associated to a $\PD_m$ algebra over a field 
$\k$ there is an alternating $m$-form,
\begin{equation}
\label{eq:mu form}
\mu_A\colon \bwedge^m A^1 \to \k, \quad
\mu_A(a_1\wedge \cdots \wedge a_m)= \varepsilon (a_1\cdots a_m).
\end{equation} 

Let us specialize now to the case when $m=3$. 
In this instance, the multiplicative structure of $A$ can 
be recovered from the $3$-form $\mu=\mu_A$ 
and the orientation $\varepsilon$, as follows. 
As before, set $n=b_1(A)$, and fix a basis $\{e_1,\dots ,e_n\}$ for $A^1$.  
Let $\{e_1^{\vee},\dots ,e_n^{\vee}\}$ be the Poincar\'{e} dual basis for $A^2$, 
and take as generator of $A^3=\k$ the class $\omega=1^{\vee}$.   
The multiplication in $A$, then, is given on basis elements by 
\begin{equation}
\label{eq:mult}
e_i  e_j=\sum_{k=1}^{n} \mu_{ijk}\,  e^{\vee}_k, \quad 
e_i e_j^{\vee} = \delta_{ij} \omega,
\end{equation} 
where $\mu_{ijk}=\mu(e_i\wedge e_j\wedge e_k )$ and $\delta_{ij}$ 
is the Kronecker delta.   An alternate way to encode this information 
is to let $A_i=(A^i)^{*}$ be the dual $\k$-vector space and to let $e^i\in A_1$ 
be the (Kronecker) dual of $e_i$.  We may then view $\mu=\mu_A$ dually 
as a trivector, 
\begin{equation}
\label{eq:trivec}
\mu =\sum \mu_{ijk}\, e^i \wedge e^j \wedge e^k \in \bwedge^3 A_1,
\end{equation} 
and will sometimes abbreviate this as $\mu =\sum \mu_{ijk}\, e^i  e^j e^k$. 

\begin{example}
\label{ex:trisum}
It is readily seen that the trivector associated 
to a connected sum of two $\PD_3$ algebras is the sum of the 
corresponding trivectors; that is, 
\begin{equation}
\label{eq:trivec sum}
\mu_{A\#B} = \mu_A+\mu_B.
\end{equation}
\end{example}

Any alternating $3$-form $\mu \colon \bwedge^3 V \to \k$ 
on a finite-dimensional $\k$-vector space $V$ determines a $\PD_3$ 
algebra $A$ over $\k$ for which $\mu_A=\mu$:  simply take $A^0=A^3=\k$ 
and $A^1=A^2=V$, choose dual bases as above, and define the 
multiplication map as in \eqref{eq:mult}.

\begin{remark}
\label{rem:roos}
In \cite{Rs}, Roos outlined procedures for writing down a presentation 
for the algebra $A$ in terms of the trivector $\mu$, and for determining 
whether $A$ is a Koszul algebra.
\end{remark}

\begin{remark}
\label{rem:sullivan}
In \cite{Su75}, Sullivan showed that every alternating $3$-form over 
a field $\k$ of characteristic $0$ can be realized as the $3$-form  
associated to the cohomology algebra $A=H^{\hdot}(M,\k)$ of a 
closed, oriented $3$-manifold $M$. 
\end{remark}

\subsection{Classification of alternating forms}
\label{subsec:trivectors}
Let $V$ be a $\k$-vector space of dimension $n$, and let 
$\bwedge^m(V^*)$ be the vector space of alternating 
$m$-forms on $V$. The general linear group $\GL(V)$ 
acts on this affine space by 
\begin{equation}
\label{eq:gact}
(g\cdot \mu) (a_1\wedge \dots \wedge a_m) := 
\mu\, \big(g^{-1} a_1 \wedge \dots \wedge g^{-1} a_m\big).
\end{equation}

The orbits of this action are the equivalence classes of alternating 
$m$-forms on $V$. (We write $\mu\sim \mu'$ if $\mu'=g\cdot \mu$.) 
Over $\overline{\k}$, the Zariski closures of these orbits 
define affine algebraic varieties.  A standard dimension argument with 
algebraic groups (see e.g.~\cite{CH}) shows that there can be finitely 
many orbits over $\overline{\k}$ only if $n^2\ge \binom{n}{m}$, that is, 
$m\le 2$ or $m=3$ and $n\le 8$.  Furthermore, when $\k=\R$ and $\overline{\k}=\C$, 
each complex orbit has only finitely many real forms, by \cite[Prop.~2.3]{BH}. 

Let us specialize now to the case of most interest to us, to wit, $m=3$. 
For $\k=\C$, the classification of alternating trilinear forms was carried 
out by Schouten \cite{Sc} in dimensions $n \le 7$ and by Gurevich \cite{Gu} 
for $n=8$.  For $\k=\R$, the classification was done by Gurevich, Revoy, 
and Westwick for $n\le 7$ and by Djokovi\'{c} \cite{Dj1} for $n= 8$. 
The classification in dimensions $n\le 7$ was extended to arbitrary 
fields by Cohen and Helminck \cite{CH}; 

Over $\C$ there are $23$ orbits in dimension $n = 8$.  Lying in 
the closure of another orbit defines a partial order on the set of 
orbits; the corresponding Hasse diagram is given in \cite{Dj2}.  
Those $23$ complex orbits split into either $1$, $2$, or $3$ 
real orbits, for a total of $35$ orbits, as indicated in \cite{Dj1}.  
Representative trivectors for each one of these $\C$-orbits 
(and the corresponding $\R$-orbits for $n\le 7$) are given 
in the tables from Appendix \ref{sect:tables}.

\subsection{Maps of non-zero degree}
\label{subsec:deg1}

Let $A$ and $B$ be two $\PD_m$ algebras.  We say that a 
morphism of graded algebras $\varphi\colon A\to B$ has 
{\em non-zero degree}\/ if the linear map $\varphi^m\colon A^m \to B^m$ 
is non-zero.  In this case, we may pick orientation classes 
such that 
\begin{equation}
\label{eq:om}
\varphi^m(\omega_A)= \omega_B.
\end{equation}
Consequently, $\varphi$ is compatible with the Poincar\'{e} duality 
isomorphisms from \eqref{eq:pd}, that is, $(\varphi^{m-i})^*\circ \PD^i_A=
\PD^i_B\circ \varphi^i$, for $0\le i\le m$.  It follows that 
\begin{equation}
\label{eq:mu phi}
\mu_B \circ \bwedge^m \varphi^1=\mu_A.
\end{equation}

Once again, the terminology comes from topology: if $f\colon M\to N$ 
is a map of degree $d\ne 0$ between two closed, oriented manifolds of 
dimension $m$, then the induced morphism in cohomology, 
$f^*\colon H^{\hdot}(N,\k)\to H^{\hdot}(M,\k)$ will restrict to 
multiplication by $d$ in degree $m$.  Thus, if the characteristic  
of $\k$ does not divide $d$ (for instance, if $\ch \k=0$), then 
the morphism $f^*$ has non-zero degree.

We shall need the following alternate way to express 
the naturality of Poincar\'{e} duality with respect to 
non-zero degree morphisms (compare with 
\cite[Lemma I.3.1]{MeS}). 

\begin{lemma}
\label{lem:deg1}
Let $\varphi\colon A\to B$ be a non-zero degree morphism 
between two $\PD_m$ algebras. Then 
$\varphi(a^{\vee}) = \varphi(a)^{\vee}$, for all 
homogeneous elements $a\in A$.
\end{lemma}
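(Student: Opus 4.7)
The plan is to verify the identity $\varphi(a^{\vee}) = \varphi(a)^{\vee}$ by directly unpacking the defining property of Poincar\'e duals and invoking the orientation normalization \eqref{eq:om}.

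First I would upgrade the scalar defining identity $\varepsilon_A(a \cdot a^{\vee}) = 1$ to an equality in $A^m$ itself. Since $A^m$ is one-dimensional over $\k$, spanned by $\omega_A$ with $\varepsilon_A(\omega_A) = 1$, and since the product $a \cdot a^{\vee}$ is a scalar multiple of $\omega_A$ whose $\varepsilon_A$-value equals $1$, it must be that $a \cdot a^{\vee} = \omega_A$.

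Next I would push this identity forward along the algebra homomorphism $\varphi$. Multiplicativity of $\varphi$, combined with the normalization $\varphi^m(\omega_A) = \omega_B$ afforded by \eqref{eq:om}, gives
\[
\varphi(a) \cdot \varphi(a^{\vee}) \;=\; \varphi(a \cdot a^{\vee}) \;=\; \varphi(\omega_A) \;=\; \omega_B.
\]
Applying $\varepsilon_B$ to both sides yields $\varepsilon_B(\varphi(a) \cdot \varphi(a^{\vee})) = \varepsilon_B(\omega_B) = 1$, which is precisely the equation defining the Poincar\'e dual of $\varphi(a)$ inside $B$. The uniqueness clause in the definition of the Poincar\'e dual (stated just after \eqref{eq:pd}) then forces $\varphi(a^{\vee}) = \varphi(a)^{\vee}$, as required.

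The argument is essentially bookkeeping and presents no serious obstacle; the only point that requires care is the one-dimensionality of $A^m$, which is what allows one to convert the scalar defining relation $\varepsilon_A(a \cdot a^{\vee}) = 1$ into the clean product identity $a \cdot a^{\vee} = \omega_A$ before pushing forward along $\varphi$. Once this translation is made, everything else follows from the multiplicativity of $\varphi$ and the compatibility of orientations.
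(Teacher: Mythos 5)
Your argument is exactly the paper's proof: the chain $\varphi(a)\cdot\varphi(a^{\vee})=\varphi(aa^{\vee})=\varphi(\omega_A)=\omega_B$ followed by the defining/uniqueness property of the Poincar\'e dual. You merely make explicit two steps the paper leaves implicit (that $a\cdot a^{\vee}=\omega_A$ via one-dimensionality of $A^m$, and the final appeal to uniqueness), so the proposal is correct and takes the same route.
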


\begin{proof}
We have $\varphi(a)\cdot \varphi(a^{\vee}) = \varphi(aa^{\vee})= 
\varphi(\omega_A) = \omega_{B}$, and the claim follows at once.
\end{proof}

\begin{proposition}
\label{prop:deg1-inj}
A morphism $\varphi\colon A\to B$ 
between two $\PD_m$ algebras is injective if and only if 
$\varphi$ has non-zero degree. 
\end{proposition}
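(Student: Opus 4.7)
The plan is to prove the two implications separately, with the forward direction being essentially immediate from dimension count and the reverse direction following from a one-line computation using Poincar\'{e} duality.

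For the forward implication, suppose $\varphi$ is injective. Then in particular the restriction $\varphi^m\colon A^m \to B^m$ is injective. Since $A$ is a $\PD_m$ algebra, its top piece $A^m$ is one-dimensional over $\k$, so any injective $\k$-linear map out of $A^m$ must be non-zero. Hence $\varphi$ has non-zero degree. This direction requires nothing beyond the basic definition of a $\PD_m$ algebra.

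For the reverse implication, suppose $\varphi$ has non-zero degree. Since $\varphi$ is a morphism of graded algebras, it is injective if and only if each graded piece $\varphi^i\colon A^i \to B^i$ is injective, so we may fix a degree $i$ with $0\le i\le m$ and a non-zero homogeneous element $a\in A^i$, and show $\varphi(a)\ne 0$. Using the assumption of non-zero degree, we normalize the orientation classes so that $\varphi^m(\omega_A)=\omega_B$ as in \eqref{eq:om}. By Poincar\'{e} duality in $A$, the element $a$ admits a dual $a^{\vee}\in A^{m-i}$ with $a a^{\vee}=\omega_A$. Applying $\varphi$ yields
\[
\varphi(a)\cdot \varphi(a^{\vee}) \;=\; \varphi(a a^{\vee}) \;=\; \varphi^m(\omega_A) \;=\; \omega_B,
\]
which is non-zero. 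Therefore $\varphi(a)$ itself must be non-zero, completing the argument.

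I do not expect any serious obstacle here; the only point that merits care is the normalization of orientation classes, which is permissible precisely because both $A^m$ and $B^m$ are one-dimensional and $\varphi^m$ is non-zero. In fact, the reverse direction is essentially a repackaging of Lemma \ref{lem:deg1}: the identity $\varphi(a^{\vee})=\varphi(a)^{\vee}$ forces $\varphi(a)$ to have a Poincar\'{e} dual in $B$, hence to be non-zero.
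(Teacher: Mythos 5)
Your proof is correct and follows the same route as the paper: the forward direction is the same dimension observation on the one-dimensional top piece, and your reverse direction is exactly the computation $\varphi(a)\varphi(a^{\vee})=\varphi(aa^{\vee})=\varphi(\omega_A)=\omega_B\ne 0$ that the paper invokes via the proof of Lemma \ref{lem:deg1}. No gaps.
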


\begin{proof}
If $\varphi$ is injective, then in particular $\varphi^m$ is injective, 
and thus is non-zero. For the converse, suppose $\varphi$ has non-zero 
degree. By the proof of the above lemma, $\varphi(a)\ne 0$, for all 
homogeneous elements $a\in A$, and the claim follows. 
\end{proof}

For instance, if $A=B\#C$, then the canonical morphisms $B\to A$ and 
$B\to C$ are injective, and thus have non-zero degree. 

An isomorphism of $\PD_m$ algebras is a map $\varphi\colon A\to B$ 
between two $\PD_m$ algebras which preserves both the graded algebra 
structures and the orientation classes. 

\begin{proposition}
\label{prop:pdm-iso}
Two $\PD_m$ algebras $A$ and $B$ are isomorphic as $\PD_m$ algebras 
if and only if they are isomorphic as graded algebras. 
Furthermore, either of these conditions implies that $\mu_A \sim \mu_B$.
\end{proposition}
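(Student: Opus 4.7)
My plan is to dispatch the two parts in turn, reducing the second claim to the naturality identity \eqref{eq:mu phi} once a PD isomorphism has been produced. The forward direction of the first biconditional is tautological: an isomorphism of $\PD_m$ algebras is by definition a graded algebra isomorphism. For the converse, given a graded algebra isomorphism $\varphi\colon A\to B$, I would observe that $\varphi^m\colon A^m\to B^m$ is a linear isomorphism between one-dimensional $\k$-vector spaces, so $\varphi^m(\omega_A)$ is a non-zero generator of $B^m$ and hence a valid orientation class on $B$. Replacing the original $\omega_B$ by this element leaves the graded algebra $B$ unchanged (only the orientation $\varepsilon_B$ is rescaled, and the Poincar\'e pairings remain non-singular), while making $\varphi$ satisfy \eqref{eq:om}. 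Thus $\varphi$ becomes an isomorphism of $\PD_m$ algebras. The underlying point is that an orientation class is only a choice, so any graded algebra isomorphism automatically respects \emph{some} compatible pair of orientation classes.

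For the second claim, after the preceding step I may assume $\varphi$ is a PD isomorphism, so \eqref{eq:mu phi} yields $\mu_B\circ\bwedge^m\varphi^1=\mu_A$. I would then pick a basis $\{e_1,\dots,e_n\}$ of $A^1$ and transport it to a basis $\{f_i:=\varphi^1(e_i)\}$ of $B^1$. In these bases the coefficients $\mu_A(e_{i_1}\wedge\cdots\wedge e_{i_m})$ and $\mu_B(f_{i_1}\wedge\cdots\wedge f_{i_m})$ coincide, so upon identifying $A^1\cong\k^n\cong B^1$ via these respective bases, $\mu_A$ and $\mu_B$ become the same element of $\bwedge^m(\k^n)^*$. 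In particular they lie in a common $\GL_n(\k)$-orbit, that is, $\mu_A\sim\mu_B$.

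I anticipate the only delicate point to be the precise reading of ``isomorphic as $\PD_m$ algebras'' in the converse of the first part. Under a strict reading in which the orientation classes are regarded as part of the given data of $A$ and $B$, one has $\varphi^m(\omega_A)=c\omega_B$ for some $c\in\k^\times$, and producing a PD isomorphism that matches the pre-specified orientations requires a degree-wise rescaling $\psi(a)=t^{\deg a}\varphi(a)$ with $t^m=c^{-1}$; such a $t$ exists whenever $\k$ is algebraically closed but may fail over, say, $\R$. The cleaner reading adopted above, in which a PD isomorphism need only be compatible with \emph{some} choice of orientations, circumvents this subtlety and appears to be the one consistent with the companion statement that $A\leftrightsquigarrow\mu_A$ descends to a bijection of isomorphism classes.
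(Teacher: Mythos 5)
Your proof is correct and follows essentially the same route as the paper: the converse direction is handled by re-choosing the orientation class on $B$ to be $\varphi^m(\omega_A)$ (the paper invokes Proposition \ref{prop:deg1-inj} together with the convention \eqref{eq:om}), and the second claim is read off from \eqref{eq:mu phi}. Your closing remark correctly identifies the reading of ``isomorphism of $\PD_m$ algebras'' --- compatibility with \emph{some} choice of orientation classes --- that the paper's own proof implicitly adopts.
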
 

\begin{proof} 
By Proposition \ref{prop:deg1-inj}, if $\varphi\colon A\to B$ is 
an isomorphism between the two underlying graded algebras, 
then condition \eqref{eq:om} is satisfied, and so $\varphi$ is an isomorphism of 
$\PD_m$ algebras. The converse is obvious.

Suppose now that $\varphi\colon A\to B$ is 
an isomorphism of $\PD_m$ algebras.  Then, by \eqref{eq:mu phi}, we 
have that $\mu_B \circ \bwedge^m \varphi^1=\mu_A$, that is, 
$\mu_B=\varphi^1\cdot \mu_A$, and so $\mu_A \sim \mu_B$.
\end{proof}

\begin{theorem}
\label{thm:pd3-iso}
For two $\PD_3$ algebras $A$ and $B$, the 
following are equivalent.
\begin{enumerate}
\item \label{pd3-1} $A\cong B$, as $\PD_m$ algebras. 
\item \label{pd3-2} $A\cong B$, as graded algebras. 
\item \label{pd3-3} $\mu_A \sim \mu_B$. 
\end{enumerate}
\end{theorem}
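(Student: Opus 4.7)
The implications $(1) \Leftrightarrow (2)$ and $(2) \Rightarrow (3)$ are already contained in Proposition \ref{prop:pdm-iso}, which holds in arbitrary formal dimension. The content specific to $m=3$ is the reverse implication $(3) \Rightarrow (1)$, and the plan rests on the fact that in this dimension the entire multiplicative structure of a $\PD_3$ algebra is encoded by its alternating $3$-form together with the Poincar\'{e} duality pairings.

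Assume $\mu_A \sim \mu_B$; unpacking the definition of the $\GL$-action in \eqref{eq:gact}, this produces a linear isomorphism $g \colon A^1 \to B^1$ such that
\begin{equation*}
\mu_A(a_1 \wedge a_2 \wedge a_3) = \mu_B\bigl(g(a_1) \wedge g(a_2) \wedge g(a_3)\bigr)
\end{equation*}
for all $a_i \in A^1$. I would construct an isomorphism of $\PD_3$ algebras $\varphi \colon A \to B$ degree by degree. Set $\varphi^0 = \id_\k$, $\varphi^1 = g$, and $\varphi^3(\omega_A) = \omega_B$. To define $\varphi^2 \colon A^2 \to B^2$, use the non-singularity of the PD pairing on $B$: for each $\beta \in A^2$, let $\varphi^2(\beta)$ be the unique element of $B^2$ characterized by
\begin{equation*}
\varepsilon_B\bigl(\varphi^2(\beta) \cdot b\bigr) = \varepsilon_A\bigl(\beta \cdot g^{-1}(b)\bigr) \quad \text{for every } b \in B^1.
\end{equation*}
By construction, $\varphi$ preserves orientations.

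It then remains to check multiplicativity. The only non-trivial products to verify are $A^1 \otimes A^1 \to A^2$ and $A^1 \otimes A^2 \to A^3$. For the first, pair $\varphi^2(aa')$ against an arbitrary $b \in B^1$: the defining identity of $\varphi^2$ gives $\varepsilon_A(aa' \cdot g^{-1}(b)) = \mu_A(a, a', g^{-1}(b))$, which by the hypothesis on $g$ equals $\mu_B(g(a), g(a'), b) = \varepsilon_B(g(a) g(a') b)$; non-degeneracy forces $\varphi^2(aa') = g(a) g(a')$. For the second, write $a\beta = \varepsilon_A(a\beta) \, \omega_A$ and apply the defining identity of $\varphi^2$ with $b = g(a)$ to obtain $g(a) \cdot \varphi^2(\beta) = \varepsilon_A(a\beta) \, \omega_B = \varphi^3(a\beta)$.

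The only step that really exploits $m = 3$ is the definition of $\varphi^2$: the PD pairing $A^2 \otimes A^1 \to A^3$ together with $g$ transports $A^2$ uniquely into $B^2$. In higher formal dimensions the intermediate graded pieces carry multiplicative data not captured by the top $m$-form alone, so no such direct reconstruction would be available. I do not expect any of the individual bookkeeping steps to be obstacles; the only conceptual choice is to define $\varphi^2$ through the PD pairings, after which both multiplicative compatibilities fall out of the $3$-form equivalence.
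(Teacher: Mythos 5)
Your proof is correct and follows essentially the same route as the paper: both reduce to showing \eqref{pd3-3} $\Rightarrow$ \eqref{pd3-2} via Proposition \ref{prop:pdm-iso}, then build $\varphi$ degree by degree with $\varphi^1=g$ and $\varphi^2$ obtained by transporting $g$ through the Poincar\'{e} duality pairings (your pairing-defined $\varphi^2$ is exactly the paper's $g^{\vee}$), and verify multiplicativity from the $3$-form identity. If anything, your explicit check of the $A^1\otimes A^2\to A^3$ product is a bit more complete than the paper's write-up.
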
 

\begin{proof}
In view of Proposition \ref{prop:pdm-iso}, we only need to show that 
\eqref{pd3-3} $\Rightarrow$ \eqref{pd3-2}. 
Suppose $\mu_A \sim \mu_B$.  There is then a linear 
isomorphism $g\colon A^1\to B^1$ such that  $\mu_B=g\cdot \mu_A$, 
that is, $\omega_B=(\bigwedge^3 g)(\omega_A)$.   Define a map 
$\varphi\colon A\to B$ by requiring $\varphi^0=\id$, $\varphi^1=g$, 
$\varphi^2=g^{\vee}$, and $\varphi^3=\bigwedge^3 g$, 
where $g^{\vee}\colon A^2\to B^2$ is given by 
$g^{\vee}(a^{\vee})=(g(a))^{\vee}$.  Clearly, $\varphi$ is also a 
linear isomorphism. Now let $a,b\in A^1$ be two non-zero elements. 
Setting $c=(ab)^{\vee}$, we have 
\[
\omega_B=\varphi^3(g)(\omega_A)=\varphi^3(g)(abc)=g(a)g(b)g(c),
\]
and so 
\[
\varphi(ab)=g^{\vee}(ab)=g^{\vee}(c^{\vee})=g(c)^{\vee}=g(a)g(b)=\varphi(a)\varphi(b).
\]
It follows that $\varphi$ is an isomorphism of graded algebras, and we are done.
\end{proof}

In conclusion, the constructions from \S\ref{subsec:alt-forms} together with the above 
theorem establish a one-to-one correspondence between isomorphism 
classes of $3$-dimensional Poincar\'e duality algebras and equivalence 
classes of alternating $3$-forms, given by $A \leftrightsquigarrow \mu_A$.

\section{Poincar\'{e} duality and resonance}
\label{sect:res pdm}

In this section we explore some of the constraints imposed  by 
Poincar\'{e} duality on the resonance varieties of a $\PD$ algebra. 
Henceforth, the ground field $\k$ will be assumed 
to be of characteristic different from $2$.

\subsection{Resonance varieties of $\PD_m$ algebras}
\label{subsec:pdres}

We start with a lemma expressing the compatibility between Poincar\'{e} 
duality and the BGG correspondence. A similar statement is proved in 
\cite[Lemma 7.3]{PS-sasaki}, in a more general context. For completeness, 
we provide a short proof.  

\begin{lemma}
\label{lem:uptosign}
Let $A$ be a $\PD_m$ algebra.  Then, for 
all $0\le i\le m$ and all $a\in A^1$, we have 
a commuting square,
\[
\xymatrixcolsep{42pt}
\xymatrix{
(A^{m-i})^*   \ar[r]^{(\delta^{m-i-1}_{-a})^*} 
& (A^{m-i-1})^*  \\
 A^{i}   \ar[r]^{\delta^i_{a}} \ar_{\Phi_i}^{\cong}[u] 
 &A^{i+1}\, , \ar_{\Phi_{i+1}}^{\cong}[u]
}
\]
where $\Phi_i=(-1)^{i} \PD^i$ . 
\end{lemma}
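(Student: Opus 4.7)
The plan is to verify commutativity by an element-level chase. Since $(A^{m-i-1})^*$ is detected by evaluation on elements of $A^{m-i-1}$, it suffices to fix arbitrary $b \in A^i$ and $c \in A^{m-i-1}$ and check that the two composite functionals $\Phi_{i+1}\circ \delta^i_a$ and $(\delta^{m-i-1}_{-a})^*\circ \Phi_i$ take the same value on $b$ when paired with $c$.

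Unpacking the definitions, the clockwise path (right then up) gives
\[
\Phi_{i+1}(\delta^i_a(b))(c) \;=\; (-1)^{i+1}\PD^{i+1}(ab)(c) \;=\; (-1)^{i+1}\varepsilon(abc),
\]
while the counter-clockwise path (up then right) gives
\[
\big((\delta^{m-i-1}_{-a})^*\Phi_i(b)\big)(c) \;=\; \Phi_i(b)(-ac) \;=\; -(-1)^i\PD^i(b)(ac) \;=\; (-1)^{i+1}\varepsilon(b\cdot ac).
\]

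The bridge between the two expressions is graded commutativity of $A$. Since $|a| = 1$ and $|b| = i$, one has $ab = (-1)^i ba$, whence $\varepsilon(b\cdot ac) = \varepsilon(bac) = (-1)^i\varepsilon(abc)$. Substituting into the two formulas above and tracking signs identifies the two sides. This, together with the linearity of $\Phi_{i+1}, (\delta_{-a}^{m-i-1})^*$ in $b$, establishes commutativity of the square.

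The only real obstacle is sign bookkeeping; there is no mathematical content beyond the definitions of $\PD^i$ and $\delta_a$ and the graded-commutativity rule. The twists $(-1)^i$ built into $\Phi_i = (-1)^i\PD^i$ and the $-a$ in the top differential are precisely engineered to absorb the Koszul sign arising from moving $a$ past an element of degree $i$, so that the diagram commutes on the nose for every $i$ and every $a\in A^1$. Once the conventions are pinned down, the verification reduces to a one-line computation.
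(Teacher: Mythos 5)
Your argument is essentially the paper's own proof, written out slightly more explicitly: fix $b\in A^i$ and $c\in A^{m-i-1}$, evaluate both composites on this pair, and use $ba=(-1)^{i}ab$ to compare $\varepsilon(abc)$ with $\varepsilon(bac)$. The one step you assert rather than carry out is the last one ("tracking signs identifies the two sides"), and if one does carry it out with your own intermediate values it does not literally close: the clockwise path gives $(-1)^{i+1}\varepsilon(abc)$, while the counter-clockwise path gives $(-1)^{i+1}\varepsilon(bac)=(-1)^{2i+1}\varepsilon(abc)=-\varepsilon(abc)$, so the two sides agree only up to the factor $(-1)^{i}$; with the literal normalization $\Phi_i=(-1)^{i}\PD^i$ the square commutes for even $i$ and anticommutes for odd $i$. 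This is a defect of the stated sign convention rather than of your method --- the paper's proof glosses over exactly the same point --- and it is harmless downstream, since a twist such as $(-1)^{i(i+1)/2}\PD^i$ makes the square commute on the nose, and in any case commutativity up to sign is all that is needed for Corollary \ref{cor:twpd}. In method and in level of rigor your write-up matches the paper's.
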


\begin{proof}
Let $b \in A^{i}$ and $c \in A^{m-i-1}$.  Then
$
\PD\circ \delta_{a}(b) (c)  =\PD( a b)  (c) =\varepsilon(abc),
$
while $\delta^*_{-a}\circ \PD (b) (c)=
\PD (b) ( \delta_{-a} (c) )  =-\PD(b) (a  c)=-\varepsilon(bac).
$
Since $ab=(-1)^{i}ba$, we are done.
\end{proof}

The next corollary follows at once.

\begin{corollary}
\label{cor:twpd}
Let $A$ be a $\PD_m$ algebra.  Then, for all $0\le i\le m$ 
and all $a\in A^1$, 
\[
\left(H^i(A,\delta_{a})\right)^* \cong H^{m-i}(A,\delta_{-a}).
\] 
Furthermore, if $\varphi\colon A\to B$ is a 
morphism between two $\PD_m$ algebras, then the map 
$\varphi^i_a\colon H^i(A,\delta_a)\to H^i(B,\delta_{\varphi(a)})$  
from \eqref{eq:phi_a} is dual to  
$\varphi^{m-i}_{-a}\colon H^{m-i}(A,\delta_{-a})\to 
H^{m-i}(B,\delta_{-\varphi(a)})$. 
\end{corollary}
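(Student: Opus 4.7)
The plan is to upgrade the commuting square of Lemma~\ref{lem:uptosign} to an isomorphism of finite cochain complexes, pass to cohomology, and then invoke the standard duality between the cohomology of a dual complex and the dual of its cohomology (valid over any field for complexes of finite-dimensional vector spaces). Concretely, for a fixed $a\in A^1$ I will define a cochain complex $(D^\bullet,d)$ by $D^j := (A^{m-j})^*$ with coboundary $d^j := (\delta^{m-j-1}_{-a})^*$. Lemma~\ref{lem:uptosign} says precisely that the maps $\Phi_\bullet$ assemble into a cochain map $\Phi\colon (A,\delta_a)\to (D^\bullet,d)$, and since each $\Phi_i=(-1)^i\PD^i$ is a linear isomorphism, $\Phi$ is an isomorphism of cochain complexes.

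Next, I would take cohomology on both sides to obtain $H^i(A,\delta_a)\isom H^i(D^\bullet)$. Because $D^\bullet$ is, up to reindexing by $j\leftrightarrow m-j$, the $\k$-dual of the finite-type cochain complex $(A,\delta_{-a})$, the aforementioned duality gives a natural isomorphism $H^i(D^\bullet)\cong (H^{m-i}(A,\delta_{-a}))^*$. Dualizing once more yields $(H^i(A,\delta_a))^*\cong H^{m-i}(A,\delta_{-a})$, which is the first assertion.

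For the functoriality statement, I would use the compatibility of $\varphi$ with the Poincar\'{e} duality isomorphisms spelled out in \S\ref{subsec:deg1} (the key input being Lemma~\ref{lem:deg1}, together with the non-vanishing of $\varphi^m$). This compatibility promotes the two squares of Lemma~\ref{lem:uptosign} for $A$ and for $B$ into the front and back faces of a commuting cube whose horizontal arrows are induced by $\varphi$ and by its $\k$-dual. Passing to cohomology and applying the identifications of the previous paragraph, the map $\varphi^i_a$ is carried to the $\k$-dual of $\varphi^{m-i}_{-a}$, as desired.

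The argument is essentially formal, and I do not foresee a genuine obstacle---the author is right to flag this as a statement that follows at once from the lemma. The only mild bookkeeping point is to ensure the sign convention $\Phi_i=(-1)^i\PD^i$ is used uniformly on both $A$ and $B$, so that the signs cancel consistently across the cube and contribute nothing to the final identifications on cohomology.
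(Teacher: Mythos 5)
Your proof is correct and follows the same route the paper intends: the paper offers no written argument beyond ``follows at once'' from Lemma \ref{lem:uptosign}, and your expansion---assembling the commuting squares into an isomorphism of cochain complexes onto the reindexed $\k$-dual of $(A,\delta_{-a})$, then invoking duality for finite-dimensional complexes---is exactly the intended reasoning. Your observation that the functoriality clause rests on the orientation normalization $\varepsilon_B\circ\varphi^m=\varepsilon_A$ from \S\ref{subsec:deg1} is a fair point that the paper leaves implicit.
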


We are now ready to state and prove the resonance analogue of the 
palindromicity of the Betti numbers of a Poincar\'{e} duality algebra.

\begin{theorem}
\label{thm:rpdf}
Let $A$ be a $\textup{PD}_m$-algebra.  Then, for all $i$ and $k$, 
\[
\RR^{i}_k(A)=\RR^{m-i}_k(A).
\] 
\end{theorem}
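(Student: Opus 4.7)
The plan is to deduce the palindromic equality directly from Corollary \ref{cor:twpd}, which is the key structural input already established; the rest is a short bookkeeping argument about dimensions and scalar multiplication.

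First I would fix $a \in A^1$ and $i, k$. By Corollary \ref{cor:twpd}, there is a $\k$-linear isomorphism
\[
H^{m-i}(A, \delta_{-a}) \cong \bigl(H^{i}(A, \delta_a)\bigr)^*,
\]
so these two vector spaces have the same dimension. Hence $\dim H^{i}(A,\delta_a) \ge k$ if and only if $\dim H^{m-i}(A, \delta_{-a}) \ge k$, which by the very definition \eqref{eq:rvs} of the resonance varieties translates into the equivalence $a \in \RR^{i}_k(A) \iff -a \in \RR^{m-i}_k(A)$.

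Next I would remove the sign. As noted right after \eqref{eq:rvs}, each $\RR^j_k(A)$ is a homogeneous subset of $A^1$, i.e.\ closed under scalar multiplication; in particular it is stable under $a \mapsto -a$ (this is where the assumption $\ch \k \ne 2$ is tacitly used, to know that $-1 \ne 0$, but in fact homogeneity alone suffices). Therefore $-a \in \RR^{m-i}_k(A)$ is equivalent to $a \in \RR^{m-i}_k(A)$, yielding the inclusion $\RR^{i}_k(A) \subseteq \RR^{m-i}_k(A)$.

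Finally, the reverse inclusion follows by the symmetric argument (apply the same reasoning with $i$ replaced by $m-i$, using that $m-(m-i) = i$), giving the desired equality $\RR^{i}_k(A) = \RR^{m-i}_k(A)$ for all $i$ and $k$. There is no real obstacle here: the content of the theorem is concentrated in the duality isomorphism of Corollary \ref{cor:twpd}, and the only minor point to watch is the sign twist, which is absorbed by the homogeneity of resonance varieties.
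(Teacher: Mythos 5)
Your proposal is correct and follows exactly the paper's own route: Corollary \ref{cor:twpd} supplies the duality $\left(H^i(A,\delta_a)\right)^* \cong H^{m-i}(A,\delta_{-a})$, and the equality of resonance varieties then follows from the definition. The paper compresses the sign bookkeeping into ``follows straight from the definition of resonance,'' while you spell out the homogeneity argument that absorbs the $a \mapsto -a$ twist; that is the only difference.
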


\begin{proof}
By Corollary \ref{cor:twpd},  the $\k$-vector space $H^i(A,\delta_{a})$ 
is dual to $H^{m-i}(A,\delta_{-a})$. The claimed equality follows straight 
from the definition of resonance.
\end{proof}

This theorem shows that it is enough to compute the resonance  
varieties of a $\PD_m$ algebra in degrees up to the middle dimension: 
the other ones are then essentially given by Poincar\'{e} duality. 

As a consequence of Theorem \ref{thm:rpdf}, we deduce that $\RR^{m}_1(A)=\{0\}$, 
a fact which was proved in a somewhat different fashion in \cite[Prop.~5.14]{DSY}. 
Moreover, in view of formula \eqref{eq:bia}, we recover the fact that 
$b_i(A)=b_{m-i}(A)$.  Thus, the above theorem may be regarded as 
a generalization of the palindromicity of the Poincar\'{e} polynomial 
of a closed, orientable manifold.  

\subsection{Connected sums and resonance}
\label{subsec:sumres}
The resonance varieties of a connected sum of two Poincar\'{e} duality algebras 
can be computed in terms of the resonance varieties of the factors. 
Arguing as in the proof of Proposition \ref{prop:rescoprod}, we 
obtain the following result.

\begin{proposition}
\label{prop:resconnsum}
Let $A=B\# C$ be the connected sum of two $\PD_m$ algebras 
with positive first Betti numbers. Then, for all $k\ge 0$, 
\begin{equation}
\label{eq:r1connsum}
 \RR_k^i(A)=
\begin{cases}
\ \bigcup\limits_{s+t=k-1} \RR^i_s(B) \times \RR^i_t(C)
&\quad \textrm{if $i=1$ or $m-1$},
\\[3pt]
\hspace{8pt} \bigcup\limits_{s+t=k}\hspace{6pt} \RR^i_s(B)\times  \RR^i_t(C)
&\quad \textrm{if $1<i <m-1$},
\\[3pt]
\hspace{11pt} \{0\}
&\quad \textrm{if $i=0$ or $m$, and $k=1$},
\end{cases}
\end{equation}
and $\RR_k^i(A)=\emptyset$, otherwise.
\end{proposition}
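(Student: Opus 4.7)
The plan is to adapt the BGG-complex argument from the proof of Proposition \ref{prop:rescoprod} to the connected-sum setting (working with $m\ge 3$), exploiting the near-splitting of $\bL(A)$, and then to invoke Theorem \ref{thm:rpdf} wherever that splitting breaks down. The key structural fact is that in $A=B\# C$ all cross products $B^i\cdot C^j$ with $i,j>0$ vanish by definition. Hence, after extending scalars from $\Sym(B_1)$ and $\Sym(C_1)$ to $S=\Sym(A_1)$, the differential $\delta^i_A\colon A^i\otimes S\to A^{i+1}\otimes S$ decomposes as the block-diagonal direct sum $\delta^i_B\oplus\delta^i_C$ for every intermediate index $1\le i\le m-2$. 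Only the two end differentials $\delta^0_A$ and $\delta^{m-1}_A$ fail to split as direct sums, since the identifications $1_A=1_B=1_C$ and $\omega_A=\omega_B=\omega_C$ force the source of $\delta^0_A$ (respectively the target of $\delta^{m-1}_A$) to be a single copy of $\k$ rather than $\k\oplus\k$.

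For the generic middle range $1<i<m-1$, I would apply \eqref{eq:rika} to $\delta^{i-1}_A\oplus\delta^i_A\cong(\delta^{i-1}_B\oplus\delta^i_B)\oplus(\delta^{i-1}_C\oplus\delta^i_C)$ and then use the multiplicativity $I_r(\phi\oplus\psi)=\sum_{s+t=r}I_s(\phi)\cdot I_t(\psi)$ together with the reindexing $u=b_i(B)-s+1$, $v=b_i(C)-t+1$, so that $u+v=k+1$. The variety $\RR^i_k(A)$ then becomes
\[
\bigcap_{u+v=k+1}\bigl((\RR^i_u(B)\times C^1)\cup(B^1\times\RR^i_v(C))\bigr),
\]
which, by a purely set-theoretic manipulation based on the descending filtration \eqref{eq:resfilt}, collapses to $\bigcup_{s+t=k}\RR^i_s(B)\times\RR^i_t(C)$, exactly as in the final step of the proof of Proposition \ref{prop:rescoprod}. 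The case $i=1$ is handled analogously via the simpler formula \eqref{eq:r1ka}: since $\delta^1_A=\delta^1_B\oplus\delta^1_C$ whenever $m\ge 3$, the same manipulations yield $\RR^1_k(A)=\bigcup_{s+t=k-1}\RR^1_s(B)\times\RR^1_t(C)$.

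The main obstacle is the case $i=m-1$: here $\delta^{m-1}_A$ is the side-by-side concatenation of the rows $\delta^{m-1}_B$ and $\delta^{m-1}_C$ rather than a direct sum, so the multiplicativity of minor ideals does not apply cleanly to $\delta^{m-2}_A\oplus\delta^{m-1}_A$. Rather than grappling with this mixed-block matrix head-on, I would sidestep the difficulty by invoking Theorem \ref{thm:rpdf} simultaneously for $A$, $B$, and $C$: this yields $\RR^{m-1}_k(A)=\RR^1_k(A)$, $\RR^{m-1}_k(B)=\RR^1_k(B)$, and $\RR^{m-1}_k(C)=\RR^1_k(C)$, and substituting into the already-established $i=1$ formula reproduces the asserted formula for $i=m-1$. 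Finally, the degenerate cases $i=0$ and $i=m$ follow from the elementary observation that $H^0(A,\delta_a)$ equals $\k$ when $a=0$ and vanishes otherwise (so that $\RR^0_1(A)=\{0\}$ and $\RR^0_k(A)=\emptyset$ for $k\ge 2$), together with one final application of Theorem \ref{thm:rpdf} to transfer the conclusion from $i=0$ to $i=m$.
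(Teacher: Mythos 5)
Your proof is correct and follows essentially the route the paper intends: the paper's own proof is the single sentence ``Arguing as in the proof of Proposition~\ref{prop:rescoprod}, we obtain the following result,'' and your expansion --- block-splitting of the intermediate differentials of $\bL(A)$, multiplicativity of the minor ideals, the set-theoretic collapse via the resonance filtration \eqref{eq:resfilt}, and an appeal to Theorem~\ref{thm:rpdf} to transfer the problematic end cases $i=m-1$ and $i=m$ back to $i=1$ and $i=0$ --- is precisely the natural way to carry that out. The only caveat is your standing assumption $m\ge 3$ (for $m=2$ one has $i=1=m-1$ and $\delta^1_A$ is a concatenation rather than a block sum, so that case would need a separate short check), which is harmless given that the paper applies the result only for $m=3$.
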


\begin{corollary}
\label{cor:resb1}
Let $A=B\# C$ be the connected sum of two $\PD_m$ algebras. 
If $b_1(B)>0$ and $b_1(C)>0$, then $\RR^1_1(A)=A^1$.
\end{corollary}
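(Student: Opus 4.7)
The plan is to derive the statement as an immediate consequence of Proposition \ref{prop:resconnsum}. Specializing the $i=1$ clause of \eqref{eq:r1connsum} to $k=1$, the union $\bigcup_{s+t=0} \RR^1_s(B) \times \RR^1_t(C)$ collapses to the single term with $s=t=0$. By the filtration convention \eqref{eq:resfilt}, $\RR^1_0(B) = B^1$ and $\RR^1_0(C) = C^1$, so $\RR^1_1(A) = B^1 \times C^1 = A^1$. The assumptions $b_1(B), b_1(C) > 0$ are exactly what permits the invocation of Proposition \ref{prop:resconnsum}.

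As a more conceptual alternative, I would deduce the same conclusion directly from Lemma \ref{lem:isotropic}\eqref{res2}: it suffices to produce, for each $a = b + c \in A^1 = B^1 \oplus C^1$, an isotropic plane containing $a$. The key structural input would be that in a connected sum of $\PD_m$ algebras (with $m \geq 3$), all cross products of positive-degree classes from the two summands vanish; in particular $B^1 \cdot C^1 = 0$ in $A^2$. Using the hypotheses $b_1(B), b_1(C) > 0$ to guarantee nonzero elements in either summand, I would always find a partner $a' \in A^1$ with $aa' = 0$ that is not proportional to $a$: for example, $a' = c$ works whenever $b \neq 0$, while if $b = 0$ one picks any nonzero $b' \in B^1$ and takes $a' = b' + c$.

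Neither approach presents a real obstacle; the only minor subtlety is the edge case $m = 2$, where $B^1 \cdot C^1$ lands in $A^2 = \k\omega$ and need not be zero, so the direct isotropic-plane argument there must instead appeal to the alternating form on $A^1$ to locate a partner. The route through Proposition \ref{prop:resconnsum} bypasses this issue entirely, so that is the plan I would prefer to write up.
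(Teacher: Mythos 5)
Your first argument is exactly the paper's: the corollary is stated as an immediate specialization of Proposition \ref{prop:resconnsum} with $i=1$, $k=1$, using $\RR^1_0(B)=B^1$ and $\RR^1_0(C)=C^1$ from \eqref{eq:resfilt}, and your write-up of that step is correct. (Your side remark about $m=2$ in the alternative route is moot for the preferred argument, and in any case the paper's connected-sum construction makes all products of positive-degree classes from distinct summands vanish, including in top degree.)
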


\begin{example}
\label{ex:surfaces}
Let $A=H^{\hdot}(\Sigma_g,\k)$  be the cohomology algebra 
of a closed, orientable surface of genus $g\ge 2$.  Since 
$\Sigma_g \cong \Sigma_{g-1}\# S^1\times S^1$, 
the above corollary yields $\RR^1_1(A)=A^1$. 
\end{example}

\subsection{A resonance obstruction to domination}
\label{subsec:obstruction}

A fundamental question in manifold topology (studied by Gromov \cite{Gr} 
and others) is to decide whether there exists a map $f\colon M\to N$ 
of non-zero degree between two closed, oriented manifolds $M$ and 
$N$ of the same dimension.  If such a map exists, one says that $M$ 
{\em dominates}\/ $N$. 

By analogy, given two $\PD_m$ algebras $A$ and $B$, we say that 
$B$ dominates $A$ if there is a non-zero degree morphism 
$A\to B$.  By Proposition \ref{prop:deg1-inj}, this 
is equivalent to saying there is an injective morphism $A\to B$; 
in particular, we must have $b_i(A) \le b_i(B)$ for all $i\ge 0$. 
Applying Corollary \ref{cor:functorial resonance}, we obtain a 
geometric obstruction to domination.

\begin{corollary}
\label{cor:ff}
Suppose $\RR^1_k(A)$ has larger dimension (or more 
irreducible components) than $\RR^1_k(B)$, for some $k\ge 1$. 
Then $B$ does not dominate $A$.
\end{corollary}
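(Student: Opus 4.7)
The plan is to argue by contrapositive, using the functoriality result stated as Corollary \ref{cor:functorial resonance} together with the characterization of dominating morphisms from Proposition \ref{prop:deg1-inj}.

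Suppose that $B$ dominates $A$, so that there exists a non-zero degree morphism $\varphi \colon A \to B$ of $\PD_m$ algebras. First I would invoke Proposition \ref{prop:deg1-inj} to upgrade this to an \emph{injective} morphism of graded algebras; in particular, the linear map $\varphi^1 \colon A^1 \to B^1$ is injective. Corollary \ref{cor:functorial resonance} then applies and yields the inclusion
\[
\varphi^1\bigl(\RR^1_k(A)\bigr) \subseteq \RR^1_k(B)
\]
for every $k \ge 0$.

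The remaining step is purely linear-algebraic: because $\varphi^1$ is an injective $\k$-linear map, it is an isomorphism onto its image, and therefore preserves both the dimension and the number of irreducible components of any Zariski-closed homogeneous subset of $A^1$. Consequently,
\[
\dim \RR^1_k(A) \;=\; \dim \varphi^1\bigl(\RR^1_k(A)\bigr) \;\le\; \dim \RR^1_k(B),
\]
and similarly for the number of irreducible components. This contradicts the hypothesis, completing the contrapositive.

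I do not anticipate any serious obstacle here: the work has essentially been done in Corollary \ref{cor:functorial resonance} and Proposition \ref{prop:deg1-inj}, and the present statement is a direct packaging of those two results. The only minor point worth stating explicitly is that a linear injection carries an algebraic subvariety isomorphically onto its image, so that dimension and irreducible-component count are both monotone under $\varphi^1$.
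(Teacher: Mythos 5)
Your argument is correct and follows exactly the route the paper intends: domination yields a non-zero degree morphism $A\to B$, Proposition \ref{prop:deg1-inj} upgrades it to an injection, Corollary \ref{cor:functorial resonance} gives $\varphi^1(\RR^1_k(A))\subseteq \RR^1_k(B)$, and monotonicity of dimension and component count under a linear embedding finishes the contrapositive. The paper treats this corollary as immediate from those two results, so your write-up just makes the same reasoning explicit.
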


\begin{example}
\label{ex:extalg}
The exterior algebra $E=\bigwedge (\k^m)$ is a Poincar\'{e} duality 
algebra of dimension $m$.  Since the Koszul complex $\bL(E)=E\otimes_{\k} S$ 
is exact, the resonance varieties of $E$ vanish; more precisely, 
$\RR^i_k(E) = \{0\}$ if $1\le k\le \binom{m}{i}$ and is empty, otherwise. 
It follows that $E$ does not dominate any $\PD_m$ algebra $A$ 
for which $\RR^1_1(A)$ has positive dimension. 
\end{example}

\section{The resonance varieties of a \texorpdfstring{$\PD_3$}{PD3} algebra}
\label{sect:res pd3}

We analyze now in more detail the structural 
properties of the resonance varieties of a $3$-dimensional 
Poincar\'{e} duality algebra. 

\subsection{Reduction to degree $1$ resonance}
\label{subsec:red1}
 The next proposition 
reduces the computation of the resonance varieties of a $\PD_3$ 
algebra to those in degree $1$. 

\begin{proposition}
\label{prop:r1r2}
Let $A$ be a $\PD_3$ algebra with $b_1(A)=n$.  Then 
\begin{enumerate}
\item \label{rr0}
$\RR^{i}_0(A)=A^1$.
\item \label{rr1}
$\RR^{3}_1(A)=\RR^0_1(A)=\{0\}$ and $\RR^{2}_n(A)=\RR^1_n(A)=\{0\}$.
\item \label{rr2}
 $\RR^{2}_k(A)=\RR^1_k(A)$ 
for $0< k < n$.
\item \label{rr3}
In all other cases, $\RR^i_k(A)=\emptyset$. 
\end{enumerate}
\end{proposition}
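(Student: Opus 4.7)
The plan is to derive all four parts from two inputs already established in the paper: the Poincar\'e duality symmetry $\RR^i_k(A) = \RR^{m-i}_k(A)$ from Theorem \ref{thm:rpdf}, and the standard descending filtration \eqref{eq:resfilt}, combined with the $\PD_3$ Betti number identities $b_0(A) = b_3(A) = 1$ and $b_1(A) = b_2(A) = n$.

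Part \eqref{rr0} is immediate from the convention $\RR^i_k(A) = A^1$ for $k \le 0$ stated in \S\ref{subsec:rv}. For the first equality of part \eqref{rr1}, the filtration \eqref{eq:resfilt} applied at $i = 0$ with $b_0(A) = 1$ yields $\RR^0_1(A) = \{0\}$, and then Theorem \ref{thm:rpdf} gives $\RR^3_1(A) = \RR^0_1(A) = \{0\}$. The second equality of part \eqref{rr1} proceeds the same way: \eqref{eq:resfilt} applied at $i = 1$ with $b_1(A) = n$ forces $\RR^1_n(A) = \{0\}$, and Theorem \ref{thm:rpdf} then yields $\RR^2_n(A) = \RR^1_n(A) = \{0\}$. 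Part \eqref{rr2} is again a direct application of Theorem \ref{thm:rpdf} with $m = 3$ and $i = 1$, which gives the identity for every $k$, in particular for $0 < k < n$.

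For part \eqref{rr3}, the case split left over consists of three families: $i \in \{0, 3\}$ with $k \ge 2$, $i \in \{1, 2\}$ with $k > n$, and $i > 3$ with $k \ge 1$. In the first two families, $k$ exceeds $b_i(A)$, so \eqref{eq:resfilt} forces $\RR^i_k(A) = \emptyset$. In the last family, $A^i = 0$ since $A$ is concentrated in degrees $0$ through $3$, hence $H^i(A, \delta_a) = 0$ for every $a$ and $\RR^i_k(A) = \emptyset$ whenever $k \ge 1$.

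The whole argument is essentially bookkeeping; no step poses any genuine difficulty once Theorem \ref{thm:rpdf} is in hand. The only care required is checking that the case split is exhaustive and non-overlapping, which is automatic because Poincar\'e duality collapses the grid of pairs $(i, k)$ onto its top half, while the filtration \eqref{eq:resfilt} then controls what happens along each row $i$ as $k$ varies from $0$ past $b_i(A)$.
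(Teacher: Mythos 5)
Your proposal is correct and follows the same route as the paper, which disposes of parts \eqref{rr0}, \eqref{rr1}, and \eqref{rr3} by the definitions together with the filtration \eqref{eq:resfilt} and the Betti numbers $b_0=b_3=1$, $b_1=b_2=n$, and deduces part \eqref{rr2} from Theorem \ref{thm:rpdf}; you have simply written out the bookkeeping that the paper leaves implicit. No issues.
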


\begin{proof}
Statements  \eqref{rr0}, \eqref{rr1}, and \eqref{rr3} follow straight 
from the definitions and previous remarks, while 
\eqref{rr2} follows from Theorem \ref{thm:rpdf}.
\end{proof}

Thus, in order to understand the resonance varieties of a $\PD_3$ algebra 
$A$, it suffices to describe the resonance varieties $\RR^1_k(A)$, 
in depths $0< k< b_1(A)$. As a trivial example, suppose $\mu_A=0$;  then 
$\RR^1_k(A)=A^1$ for $k<b_1(A)$.

\subsection{Decomposable and irreducible forms}
\label{subsec:decomp-forms}
The next result further reduces the computation of the resonance 
varieties of an arbitrary $\PD_3$ algebra to those of a $\PD_3$ 
algebra whose associated $3$-form is irreducible. 

Let $\mu\colon \bwedge^3 V\to \k$ be an alternating $3$-form 
on a finite-dimensional $\k$-vector space $V$.   
The {\em rank}\/ of $\mu$ is the minimum 
dimension of a linear subspace $W\subset V$ such that $\mu$ factors
through $\bigwedge^3 W$; we write $\corank \mu=\dim V-\rank \mu$.
The $3$-form $\mu$ is said to be {\em irreducible}\/ if it 
has maximal rank, that is, $\corank \mu=0$.  

\begin{theorem}
\label{thm:nonmax}
Every $\PD_3$ algebra $A$ decomposes as 
$A\cong B\#C$, where $B$ are $C$ are $\PD_3$ 
algebras such that $\mu_B$ is irreducible and has 
the same rank as $\mu_A$, and $\mu_{C}=0$. 
Furthermore, the isomorphism $A^1\cong B^1\oplus C^1$ 
restricts to isomorphisms 
\begin{equation}
\label{eq:r1bar}
\RR^1_k(A) \cong  \RR^1_{k-r+1}(B) \times C^1
\cup \RR^1_{k-r}(B) \times \{0\}
\end{equation}
for all $k\ge 0$, where $r=\corank \mu_A$. 
\end{theorem}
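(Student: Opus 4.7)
The plan is to split the proof into two parts: first, constructing the decomposition $A \cong B \# C$ at the level of $3$-forms; second, computing the resonance varieties of $A$ via Proposition \ref{prop:resconnsum} applied to this decomposition.

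For the decomposition, I would let $B^1 \subseteq A^1$ be a subspace of minimum dimension $\rank \mu_A$ through which $\mu_A$ factors (such exists by definition of rank), and pick any linear complement $C^1$, so that $A^1 = B^1 \oplus C^1$ and $\dim C^1 = r = \corank \mu_A$. By construction, $\mu_B := \mu_A|_{\bwedge^3 B^1}$ has maximal rank $\dim B^1$, hence is irreducible. Letting $B$ and $C$ denote the $\PD_3$ algebras associated to $\mu_B$ and to $\mu_C = 0$ respectively, Example \ref{ex:trisum} gives $\mu_{B \# C} = \mu_B + \mu_C = \mu_B$, and this agrees with $\mu_A$ under the identification $B^1 \oplus C^1 \cong A^1$, since $\mu_A$ vanishes on any trivector with a factor in $C^1$. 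Theorem \ref{thm:pd3-iso} then yields $A \cong B \# C$ as $\PD_3$ algebras.

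For the resonance computation, I would first describe $\RR^1_k(C)$ explicitly. The vanishing of $\mu_C$ forces $\delta^1_c = 0$ for every $c \in C^1$, from which a direct calculation gives $b_1(C,c) = r - 1$ for $c \ne 0$ and $b_1(C,0) = r$, whence $\RR^1_j(C) = C^1$ for $j \le r - 1$, $\RR^1_r(C) = \{0\}$, and $\RR^1_j(C) = \emptyset$ for $j > r$. Substituting into the formula $\RR^1_k(A) = \bigcup_{s+t=k-1} \RR^1_s(B) \times \RR^1_t(C)$ from Proposition \ref{prop:resconnsum} and partitioning the union over $t$ according to whether $t \le r - 1$ or $t = r$, I get two families of contributions: the terms $\RR^1_{k-1-t}(B) \times C^1$ for $t = 0, \ldots, r-1$, and the single term $\RR^1_{k-1-r}(B) \times \{0\}$. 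The first family collapses via the descending filtration $\RR^1_0(B) \supseteq \RR^1_1(B) \supseteq \cdots$ to its largest member, and after reindexing this produces formula \eqref{eq:r1bar}.

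The main technical hurdle will be the careful bookkeeping of indices at the boundary cases, using the conventions $\RR^1_s(B) = B^1$ for $s \le 0$, $\RR^1_{b_1(B)}(B) = \{0\}$, and $\RR^1_s(B) = \emptyset$ for $s > b_1(B)$. Tracking how these boundary values interact with the union decomposition is what makes each case of \eqref{eq:r1bar} work out correctly, including the degenerate cases $r = 0$ (where $C^1 = 0$, $B = A$, and the formula collapses to $\RR^1_k(A) = \RR^1_k(B)$) and $k = b_1(A)$ (where the result must reduce to the single point $\{0\}$).
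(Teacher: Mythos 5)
Your construction of the decomposition $A\cong B\#C$ and your computation of $\RR^1_t(C)$ are correct and coincide with what the paper does; the paper's proof likewise reduces everything to Proposition \ref{prop:resconnsum}. The gap is concentrated in your last sentence. Carrying out the substitution exactly as you describe, the first family $\RR^1_{k-1-t}(B)\times C^1$, $t=0,\dots,r-1$, collapses to its largest member, which is the one of \emph{smallest} depth, namely $\RR^1_{k-1-(r-1)}(B)\times C^1=\RR^1_{k-r}(B)\times C^1$, and the $t=r$ term is $\RR^1_{k-1-r}(B)\times\{0\}$. So what your argument actually yields is
\[
\RR^1_{k-r}(B)\times C^1\;\cup\;\RR^1_{k-r-1}(B)\times\{0\},
\]
in which both depth indices are one less than the indices $k-r+1$ and $k-r$ appearing in \eqref{eq:r1bar}. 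No reindexing reconciles the two, since both expressions are written in terms of the same $k$ and $r$; the assertion that your union ``produces formula \eqref{eq:r1bar}'' is precisely the step that does not close.

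The mismatch is not cosmetic bookkeeping. Take $\mu_A=e^1\wedge e^2\wedge e^3$ on $A^1=\k^4$, so $B$ is the rank-$3$ algebra with $\RR^1_1(B)=\{0\}$, $r=1$, and let $k=1$. Here $e_4$ annihilates all of $A^1$, so every vector lies in an isotropic plane and $\RR^1_1(A)=A^1$; your derived expression gives $\RR^1_0(B)\times C^1=A^1$, whereas \eqref{eq:r1bar} gives $\RR^1_1(B)\times C^1\cup\RR^1_0(B)\times\{0\}$, a line union a hyperplane. So the formula you are asked to prove and the formula your substitution delivers genuinely differ, and you must either locate a compensating shift (there is none in the argument as written) or flag the discrepancy explicitly. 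You should also be aware that Proposition \ref{prop:resconnsum} itself needs care at the boundary depth $t=b_1(C)$, where the identification of $V(I_0(\delta^1_C))$ with $B^1\times\RR^1_{b_1(C)}(C)$ breaks down; a cleaner route is to observe that $\delta^1_A=\delta^1_B\oplus 0$ and apply \eqref{eq:r1ka} directly, which gives $\RR^1_k(A)=V(I_{n-k}(\delta^1_B))=\RR^1_{k-r}(B)\times C^1$ for $k<n$. The paper's own proof is the single sentence ``Formula \eqref{eq:r1bar} now follows from Proposition \ref{prop:resconnsum}'' and is open to the same objection, so you have in effect reproduced its argument while exposing the one point that requires real verification; do not paper over it.
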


\begin{proof}
Let $W\subset A^1$ be 
a subspace of dimension equal to $\rank \mu_A$ for 
which the form $\mu_A\colon \bigwedge^3 V\to \k$ factors through 
$\bigwedge^3 W$, and let $\bar{\mu}$ be the restriction of $\mu$ to $W$. 
By construction, this is a $3$-form whose rank equals that of $\mu$, that is, 
$\rank \bar\mu=\dim W$.  

Let $B$ be the $\PD_3$ algebra corresponding to $\bar{\mu}$.  Evidently, 
$B^1=W$ and $\mu_{B}=\bar\mu$ is irreducible.  It is now readily seen that 
$A\cong B\#C$, where $C$ is the $\PD_3$ algebra with $C^1=A^1/B^1$ 
and $\mu_{C}=0$.  

By a previous observation, $\RR^1_t( C)= C^1$ for $t<r$ and 
$\RR^1_r(C)=\{0\}$. Formula \eqref{eq:r1bar} now follows 
from Proposition \ref{prop:resconnsum}.
\end{proof}

\begin{remark}
\label{rem:decomp}
Suppose $A=H^{\hdot}(M,\k)$ is the cohomology algebra of a closed, orientable 
$3$-manifold $M$.  Write $M=N\#P$, where $P$ is the connected sum 
of the factors in the prime decomposition of $M$  having the $\k$-homology 
of either $S^3$ or $S^1 \times S^2$ 
and $N$ is the connected sum of all the other factors. Setting  
$B=H^{\hdot}(N,\k)$ and $C=H^{\hdot}(P,\k)$, we recover the 
decomposition $A\cong B\#C$ from the above result.
\end{remark}

As an immediate consequence of Theorem \ref{thm:nonmax}, we have 
the following corollary.

\begin{corollary}
\label{cor:res-corank}
If $A$ is a $\PD_3$ algebra, then $\RR^1_k(A)=A^1$ for 
all $k< \corank \mu_A$.
\end{corollary}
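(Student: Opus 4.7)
The plan is to deduce this corollary as an immediate consequence of Theorem \ref{thm:nonmax}, using the convention from Section \ref{subsec:rv} that $\RR^i_k(A)=A^1$ whenever $k\le 0$.

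First I would set $r=\corank\mu_A$ and write $A\cong B\#C$ as in Theorem \ref{thm:nonmax}, so that the isomorphism $A^1\cong B^1\oplus C^1$ restricts to
\[
\RR^1_k(A) \cong \RR^1_{k-r+1}(B)\times C^1 \;\cup\; \RR^1_{k-r}(B)\times\{0\}.
\]
Then I would simply observe that if $k<r$, then $k-r\le -1<0$ and $k-r+1\le 0$. By the convention recalled above, both $\RR^1_{k-r+1}(B)$ and $\RR^1_{k-r}(B)$ are equal to all of $B^1$.

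Substituting back, the first piece of the union already gives $B^1\times C^1 = A^1$, so the union collapses to $A^1$, which is what we want. The whole argument is just unpacking the definitions together with the decomposition result, so there is no genuine obstacle; the only point worth flagging is the bookkeeping about the convention $\RR^1_{\le 0}=A^1$, which makes the otherwise formal substitution go through.
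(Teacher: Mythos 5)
Your proposal is correct and matches the paper's intent exactly: the paper states this corollary as an ``immediate consequence of Theorem \ref{thm:nonmax}'' without further detail, and the detail it is suppressing is precisely your observation that for $k<r$ both subscripts $k-r$ and $k-r+1$ are $\le 0$, so the convention $\RR^1_{\le 0}(B)=B^1$ makes the first term of the union equal to $B^1\times C^1=A^1$. Nothing to add.
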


\subsection{Nullity and isotropic subspaces}
\label{subsec:res 3form}
Before proceeding, we need a few more classical definitions, 
suitably adapted to our setup (see for instance \cite{DrS14,Si}).  

Let $\mu\colon \bwedge^3 V\to \k$ be a $3$-form. 
A linear subspace $U\subset V$ is {\em $2$-singular}\/ 
with respect to $\mu$ 
if $\mu(a\wedge b\wedge c)=0$ for all $a,b\in U$ and $c\in V$. 
(If $\dim U=2$, we simply say $U$ is a {\em singular plane}.) 
The {\em nullity}\/ of $\mu$, denoted $\nl(\mu)$, is the maximum dimension 
of a $2$-singular subspace $U\subset V$.  Clearly, $V$ contains a 
$\mu$-singular plane if and only if $\nl(\mu)\ge 2$.  

The following (very simple) lemma clarifies the relationship between 
singularity and isotropicity in the context of $\PD_3$ algebras.

\begin{lemma}
\label{lem:sing}
Let $A$ be a $\PD_3$ algebra.  A linear subspace $U\subset A^1$ is 
$2$-singular (with respect to $\mu_A$) if and only if $U$ is isotropic. 
\end{lemma}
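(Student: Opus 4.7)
The plan is to unwind the definitions and then invoke the non-degeneracy of the Poincar\'e duality pairing $A^2\otimes_\k A^1\to \k$, $u\otimes c\mapsto \varepsilon(uc)$.

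First I would record the translation: for $a,b\in U$ and $c\in A^1$, the defining condition of $2$-singularity reads $\mu_A(a\wedge b\wedge c)=\varepsilon(abc)=\varepsilon((ab)c)=0$, while isotropicity of $U$ reads $ab=0$ in $A^2$ for all $a,b\in U$.

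The forward direction (isotropic $\Rightarrow$ $2$-singular) is immediate: if $ab=0$ in $A^2$, then $abc=0$ in $A^3$, hence $\mu_A(a\wedge b\wedge c)=0$ for every $c\in A^1$. The main (and only nontrivial) step is the converse. Assume $U$ is $2$-singular; then for fixed $a,b\in U$ the element $ab\in A^2$ satisfies $\varepsilon((ab)c)=0$ for every $c\in A^1$. By the Poincar\'e duality isomorphism $\PD^2\colon A^2\isom (A^1)^*$ of \eqref{eq:pd}, the pairing $A^2\otimes A^1\to \k$ is non-singular, so $ab=0$ in $A^2$. Since this holds for all $a,b\in U$, the subspace $U$ is isotropic.

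I don't anticipate a genuine obstacle: once the definitions are lined up, the argument is a one-line application of non-degeneracy. The only thing to be careful about is the order of multiplication (the associativity $\varepsilon(abc)=\varepsilon((ab)c)$ is needed to put $ab$ in $A^2$ and pair it with $c\in A^1$), but this is automatic from the graded-commutative algebra structure.
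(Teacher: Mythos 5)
Your argument is correct and is essentially identical to the paper's own proof: both directions are handled by unwinding the definitions, with the nontrivial implication ($2$-singular $\Rightarrow$ isotropic) following from the non-degeneracy of the Poincar\'e duality pairing $A^2\otimes_\k A^1\to\k$. Nothing is missing.
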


\begin{proof}
If $U\subset A^1$ is a $2$-singular subspace, then
$\mu_A(a\wedge b\wedge c)=\varepsilon(abc)=0$ for all $a,b\in U$ 
and $c\in A^1$. Since the bilinear form $A^2\otimes_\k A^1\to \k$, 
$\gamma\otimes c\mapsto \varepsilon (\gamma c)$ is non-degenerate, 
this implies $ab=0$ for all $a,b\in U$, that is, $U$ is isotropic.  

Conversely, if $U\subset A^1$ is an isotropic subspace, 
then $ab=0$ for all $a,b\in U$. Thus,  
$\mu_A(a\wedge b\wedge c)=\varepsilon(abc)=0$ for all $a,b\in U$ 
and $c\in A^1$, that is, $U$ is $2$-singular.
\end{proof}

The next result gives a lower bound on the 
dimension of the degree-$1$ resonance varieties.

\begin{theorem}
\label{thm:dimres}
Let $A$ be a $\PD_3$ algebra over an algebraically closed field $\k$ (of 
characteristic different from $2$), and let $\nu=\nl(\mu_A)$ be the nullity 
of the associated alternating $3$-form.  If $b_1(A)\ge 4$, then 
\[
\dim \RR^1_{\nu -1}(A)\ge \nu \ge 2.
\]
In particular, $\dim \RR^1_1(A)\ge \nu$. 
\end{theorem}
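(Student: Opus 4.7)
The bound $\dim \RR^1_{\nu-1}(A) \ge \nu$ is essentially a formal consequence of the framework already developed. By Lemma \ref{lem:sing}, $\nu$ coincides with the maximum dimension of an isotropic subspace of $A^1$; picking such a $U \subseteq A^1$ with $\dim U = \nu$, Lemma \ref{lem:isotropic}\eqref{res1} gives $U \subseteq \RR^1_{\nu-1}(A)$, so this resonance variety contains a linear subspace of dimension $\nu$. The ``in particular'' clause will then follow by monotonicity of the resonance filtration \eqref{eq:resfilt}: once $\nu \ge 2$ is established, we have $\RR^1_{\nu-1}(A) \subseteq \RR^1_1(A)$, whence $\dim \RR^1_1(A) \ge \nu$.

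The substantive step is thus the lower bound $\nu \ge 2$, for which I plan to exhibit an isotropic $2$-plane in $A^1$. The central object is the contraction map $\iota \colon A^1 \to \bwedge^2(A^1)^*$ sending $a$ to the alternating bilinear form $(b,c) \mapsto \mu_A(a \wedge b \wedge c)$. For every $a \in A^1$, the form $\iota(a)$ has $a$ in its kernel and has even rank (being alternating), and by Poincar\'{e} duality together with Lemma \ref{lem:sing}, a subspace of $A^1$ spanned by linearly independent elements of $\ker \iota(a)$ is automatically isotropic. If $n := b_1(A)$ is even, then $\rank \iota(a) \le n-1$ combined with parity forces $\rank \iota(a) \le n-2$, so $\dim \ker \iota(a) \ge 2$ for every nonzero $a$, and choosing any $b \in \ker \iota(a) \setminus \langle a \rangle$ yields the desired isotropic plane $\langle a, b\rangle$. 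This handles the even case without any further hypothesis.

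For $n$ odd (so $n \ge 5$), set $r := \corank \mu_A$. If $r \ge 2$, the radical $\ker \iota$ is itself a $2$-singular subspace of dimension $r$, and we are done; if $r = 1$, Theorem \ref{thm:nonmax} gives $A \cong B \# C$ with $\mu_B$ irreducible on $B^1$, where $\dim B^1 = n-1$ is even and at least $4$, and applying the even case to $B$ yields an isotropic plane in $B^1$ which, together with $C^1$, gives an isotropic subspace of $A^1$ of dimension $3$. The main obstacle is the remaining irreducible case $r = 0$, where $\iota$ is an injective linear map. Here I plan to invoke the projective dimension theorem in $\mathbb{P}(\bwedge^2(A^1)^*) \cong \mathbb{P}^N$, $N = \binom{n}{2}-1$: the image $\mathbb{P}(\iota(A^1))$ is a linear subspace of dimension $n-1$, while the classical variety of alternating $n \times n$ forms of rank at most $n-3$ is a closed irreducible subvariety of codimension $\binom{3}{2} = 3$. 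The inequality $(n-1) + (N-3) \ge N$ is equivalent to $n \ge 4$, so over the algebraically closed field $\k$ these two irreducible subvarieties must meet, producing a nonzero $a \in A^1$ with $\rank \iota(a) \le n-3$, hence $\dim \ker \iota(a) \ge 3$, and any two linearly independent vectors in this kernel span the required isotropic plane. The assumption $\overline{\k}=\k$ is used precisely, and only, at this last intersection step, which is the heart of the argument.
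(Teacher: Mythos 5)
Your argument is correct in substance, and its formal half --- deducing $\dim\RR^1_{\nu-1}(A)\ge\nu$ from a $\nu$-dimensional $2$-singular (hence, by Lemma \ref{lem:sing}, isotropic) subspace via Lemma \ref{lem:isotropic}, with the ``in particular'' clause coming from the filtration \eqref{eq:resfilt} --- is exactly what the paper does. Where you genuinely diverge is on the inequality $\nu\ge 2$: the paper invokes Sikora's result \cite[Cor.~20]{Si} as a black box, whereas you reprove it from scratch using the contraction $\iota\colon a\mapsto \gamma_a$ and, in the only hard case ($n$ odd and $\mu_A$ irreducible), the projective dimension theorem applied to the $(n-1)$-dimensional linear space $\mathbb{P}(\iota(A^1))$ and the irreducible, codimension-$3$ Pfaffian degeneracy locus $\{\rank\le n-3\}$ inside $\mathbb{P}\big(\bwedge^2(A^1)^*\big)$. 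This is essentially the standard proof of Sikora's statement, so what your route buys is self-containedness and transparency: it isolates exactly where algebraic closure is used (only at the intersection step), and shows the even case holds over any field --- consistent with the discussion in \S\ref{subsec:res-real}. The cost is length, plus reliance on the codimension count for Pfaffian loci, which you should at least reference.

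One imprecision needs fixing. It is not true that every subspace spanned by linearly independent elements of $\ker\gamma_a$ is isotropic; only planes of the form $\langle a,b\rangle$ with $b\in\ker\gamma_a$ are guaranteed to be singular, since the sole nontrivial condition for such a plane is $\mu_A(a\wedge b\wedge c)=0$ for all $c$. For example, for the form $\mathrm{III}$, $\mu=125+345$, one has $e_3,e_4\in\ker\gamma_{e_1}$ but $\mu(e_3\wedge e_4\wedge e_5)=1\ne 0$, so $\langle e_3,e_4\rangle$ is not singular. Consequently, in your final step you cannot take ``any two linearly independent vectors'' of the $(\ge 3)$-dimensional kernel of $\gamma_a$; you must take $a$ itself together with some $b\in\ker\gamma_a\setminus\langle a\rangle$ (note $a\in\ker\gamma_a$), exactly as you correctly do in the even case. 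With that one-line correction the argument goes through.
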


\begin{proof}
Since $\dim_{\k} A^1\ge 4$ and $\k$ is algebraically closed, 
a result of Sikora \cite[Cor.~20]{Si} implies that $\nl(\mu_A)\ge 2$. 

To prove the other inequality, pick a linear  
subspace $U\subset A^1$ of dimension $\nu$ such that 
$\mu_A(a\wedge b\wedge c)=\varepsilon(abc)=0$ for all $a,b\in U$ 
and $c\in A^1$. By Lemma \ref{lem:sing}, 
the subspace $U$ is isotropic.  
Also, by what we just established,  $\dim U\ge 2$.  
Therefore, by Lemma \ref{lem:isotropic}, 
$U\subseteq \RR^1_{\nu -1}(A)$. Hence,  
$\dim U\le \dim \RR^1_{\nu -1}(A)$, and we are done.
\end{proof}

\subsection{Resonance varieties of $\PD_3$ algebras over $\R$}
\label{subsec:res-real}

Motivated by his study of cut numbers of $3$-manifolds, 
Sikora  made in \cite{Si}  the following conjecture: If $\mu\colon \bwedge^3 V\to \k$ 
is a $3$-form with $\dim V\ge 4$ and if $\ch(\k)\ne 2$, then 
the nullity of $\mu$ is at least $2$ (i.e., $V$ contains a singular plane). 
He noted that the conjecture holds if either $n:=\dim V$ is even or 
equal to $5$, or, as mentioned above, if $\k=\overline{\k}$. 
Nevertheless, work of Draisma and Shaw \cite{DrS10,DrS14} 
implies that the conjecture does not hold for $\k=\R$ and $n=7$.
The following result explains the reason, in terms of resonance varieties. 

\begin{theorem}
\label{thm:real isotropic}
Let $A$ be a $\PD_3$ algebra defined over $\R$.
Then $\RR^1_1(A)\ne \{0\}$, except when $\mu_A$ is one of the 
forms $\mathrm{I}$, $\mathrm{III}$, or $\mathrm{X}_b$ from Appendix \ref{sect:tables}.
\end{theorem}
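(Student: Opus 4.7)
The plan is to reinterpret $\RR^1_1(A)\ne\{0\}$ as the existence of a $\mu_A$-singular plane in $A^1$, reduce to the case of an irreducible form via Theorem \ref{thm:nonmax}, and then consult the classification of alternating $3$-forms from Appendix \ref{sect:tables} combined with the Draisma--Shaw theorem \cite{DrS10}. By Lemma \ref{lem:isotropic}\eqref{res2} together with Lemma \ref{lem:sing}, the condition $\RR^1_1(A)\ne\{0\}$ is equivalent to $A^1$ containing an isotropic plane, which in turn is equivalent to $\nl(\mu_A)\ge 2$. Thus the task becomes: identify those alternating $3$-forms $\mu$ with $\nl(\mu)\le 1$ and match them with the list $\mathrm{I}$, $\mathrm{III}$, $\mathrm{X}_b$.

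For the reduction step, Theorem \ref{thm:nonmax} lets me write $A\cong B\#C$ with $\mu_B$ irreducible and $\mu_C=0$. If $\dim C^1\ge 2$, then $C^1$ itself is an isotropic plane, since $\mu_C=0$. If $\dim C^1=1$ and $\dim B^1\ge 1$, then any plane spanned by nonzero vectors $c\in C^1$ and $b\in B^1$ is isotropic: $b^2=c^2=0$ in characteristic $\ne 2$, and $bc=0$ by the very construction of the connected sum, cf.\ \eqref{eq:pushout}. So, outside the degenerate range $\dim A^1\le 1$, which accounts for the form $\mathrm{I}$, we may assume $\mu_A$ is irreducible with $n=\dim A^1\ge 2$.

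In this irreducible setting I would walk through the classification of Appendix \ref{sect:tables}. For even $n$, Sikora's theorem \cite{Si} already gives $\nl(\mu_A)\ge 2$. For $n=3$, the unique irreducible form (up to equivalence) is $\mu=e^1 e^2 e^3$, which clearly has $\nl=0$; this is the exceptional form $\mathrm{III}$. For $n=5$, a direct inspection of the short list of real orbits shows that each irreducible form admits a singular plane. For $n=7$, the decisive input is \cite{DrS10}: Draisma and Shaw show that over $\R$ there is, up to equivalence, a unique orbit of irreducible $3$-forms with $\nl\le 1$, which I would identify with the orbit $\mathrm{X}_b$ of \cite{Dj1}. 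For $n\ge 8$ the Draisma--Shaw methods again give $\nl\ge 2$, so no further exceptions arise. The main technical obstacle is exactly the $n=7$ case: transporting the Draisma--Shaw counterexample, written in the coordinates of \cite{DrS10}, via an explicit change of basis to a representative of the orbit $\mathrm{X}_b$, and then certifying, orbit by orbit, that every other real orbit in dimension $7$ does admit a singular plane by exhibiting one explicitly.
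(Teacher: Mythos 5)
Your setup is sound: the equivalence of $\RR^1_1(A)\ne\{0\}$ with the existence of a $\mu_A$-singular plane (via Lemma \ref{lem:isotropic}\eqref{res2} and Lemma \ref{lem:sing}), the reduction to the irreducible case, and the treatment of even $n$ and of $n=3,5$ all match what is needed. But there is a genuine gap at the end: for odd $n\ge 9$ you have no argument. The classification tables in Appendix \ref{sect:tables} stop at rank $8$ (indeed, there are infinitely many orbits once $n\ge 9$), so one cannot ``walk through the classification'' there, and the phrase ``the Draisma--Shaw methods again give $\nl\ge 2$'' is not a proof --- note that the Draisma--Shaw hypersurface statement (Corollary \ref{ren:draisma-shaw}) is of no help over $\R$, since a real hypersurface such as $\sum x_i^2=0$ can have the origin as its only real point, exactly as happens for $\mathrm{X}_b$.

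The missing idea, which is the heart of the paper's proof, is topological: by \cite[Theorem 2]{DrS14}, if $A^1=\R^n$ contains no $\mu_A$-singular plane, then $(x\times y)\cdot z=\mu_A(x,y,z)$ defines a cross product on $\R^n$, which in turn yields a division algebra structure on $\R^{n+1}$; the Bott--Milnor/Kervaire theorem then forces $n=3$ or $7$ (for $n>2$). This single step disposes of all dimensions at once --- including the problematic odd $n\ge 9$ --- and reduces the case analysis to inspecting the tables only for $n=3$ and $n=7$, where the exceptional orbits are $\mathrm{III}$ (quaternions) and $\mathrm{X}_b$ (octonions). Without some substitute for this division-algebra input, your proof does not close.
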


\begin{proof}
Set $n=b_1(A)$.  If $n\le 2$ everything is clear, so let's assume 
that $n>2$.   We may also assume that $\mu_A$ is irreducible, 
for otherwise, by Corollary \ref{cor:resb1}, $\RR^1_1(A)=A^1$,
and there is nothing to prove. 

Suppose now that $\RR^1_1(A)= \{0\}$, i.e., $\RR^1_1(A)$ contains 
no singular plane. Then, by 
Lemmas \ref{lem:isotropic} and \ref{lem:sing}, $A^1$ contains no 
singular plane. Hence, as shown in \cite[Theorem 2]{DrS14}, the 
formula $(x\times y)\cdot z = \mu_A(x,y,z)$ defines a cross-product on 
$A^1=\R^n$.  In turn, this cross-product yields a division algebra 
structure on $\R^{n+1}$, and so, by a celebrated result of Bott--Milnor 
and Kervaire, we must have $n=3$ or $7$.  An inspection of the tables  from 
Appendix \ref{sect:tables} shows that $\mu_A$ must be equivalent to either 
$\mathrm{III}$ (the associated cross-product on $\R^3$ arises from quaternionic 
multiplication in $\R^4$) or $\mathrm{X}_b$ (as noted in \cite{DrS14}, 
the corresponding cross-product on $\R^7$ arises from octonionic 
multiplication in $\R^8$). This completes the proof.
\end{proof}

The above proof highlights the fact (already alluded to in Remark 
\ref{rem:char-res}) that real resonance varieties may carry 
more refined information than their complex counterparts.  
We make this observation more explicit in the following example.

\begin{example}
\label{ex:sing-iso}
Let $A$ and $A'$ be the real $\PD_3$ algebras corresponding to the 
trivectors $\mathrm{X}_a$ and  $\mathrm{X}_b$.
Then $A\otimes_{\R} \C\cong A'\otimes_{\R} \C$, since 
$\mu_A\sim \mu_{A'}$ over $\C$.  
On the other hand, $A\not\cong A'$ over $\R$, 
since $\mu_A\not\sim \mu_{A'}$ over $\R$, but also 
because $\RR^1_1(A)\ne \{0\}$, yet $\RR^1_1(A')= \{0\}$.  

Note that both $\RR^1_1(A\otimes_{\R} \C)$ and 
$\RR^1_1(A'\otimes_{\R} \C)$ are projectively smooth conics, and thus 
are projectively equivalent over $\C$.  Nevertheless, 
$\RR^1_1(A'\otimes_{\R} \C)=\{x\in \C^7\mid \sum x_i^2=0\}$ has only 
one real point ($x=0$), whereas 
$\RR^1_1(A\otimes_{\R} \C)=\{x\in \C^7\mid x_1x_4+x_2x_5+x_3x_6=x_7^2\}$  
contains, for instance, the real (isotropic) subspace $\{x_4=x_5=x_6=x_7=0\}$.
\end{example}

\section{Pfaffians ideals and resonance}
\label{sect:res pfaff}

In this section we express the resonance varieties of a $\PD_3$ 
algebra $A$ in terms of the Pfaffians of the skew-symmetric matrix  
associated to the boundary map $\delta^1_A$, and determine those 
varieties in bottom depth. 

\subsection{The cochain complex $\bL(A)$}
\label{subsec:aomoto pd3}
Once again, let $A$ be a $\PD_3$ algebra over a field $\k$ of characteristic   
not equal to $2$.  Fix a basis $\{e_1,\dots ,e_n\}$ for $A^1$, 
identify the ring $S=\Sym(A_1)$ with $\k[x_1,\dots, x_n]$, and  
consider the cochain complex $\bL(A)=(A\otimes_\k S,\delta_A)$ 
defined by the BGG correspondence,
\begin{equation}
\label{eq:koszul 3mfd}
\xymatrixcolsep{22pt}
\xymatrix{
A^0\otimes_\k S \ar^{\delta^{0}_A}[r] 
& A^1\otimes_\k  S \ar^{\delta^{1}_A}[r] 
&A^2\otimes_\k  S\ar^{\delta^{2}_A}[r] 
&A^3\otimes_\k  S}.
\end{equation}
Recall from \S\ref{subsec:eqresvar} that the differentials in  $\bL(A)$  
are the $S$-linear maps given by 
$\delta^q(u)=\sum_{j=1}^{n} e_j u \otimes x_j$ for $u\in A^q$.  
In the bases for $A^0,\dots, A^3$ chosen in \S\ref{subsec:alt-forms},
we have that
\begin{align}
\label{eq:delta1 3m}
\delta^0_A(1)&=\sum_{j=1}^{n} e_j \otimes x_j \,, \notag \\
\delta^1_A(e_i)&=\sum_{j=1}^{n} e_j e_i \otimes x_j = 
\sum_{j=1}^{n} \sum_{k=1}^{n}\mu_{jik} e_k^{\vee} \otimes x_j  \, ,  \\
\delta^2_A(e_i^{\vee})&=\sum_{j=1}^{n} e_j e_i^{\vee} \otimes x_j =
\omega \otimes x_i \, . \notag
\end{align}

Observe that the first and third maps have matrices 
$\delta^0_A= \big(x_1 \, \cdots\, x_n\big)$ 
and $\delta^2_A=(\delta^0_A)^{\top}$.%
The most interesting to us is the skew-symmetric matrix 
associated to the boundary map $\delta^1_A$. 

\begin{example}
\label{ex:delta1}
Let $\mu_A=(e^1\wedge e^2+e^3\wedge e^4)\wedge e^5$ be the 
trivector $5_1$ from Appendix \ref{sect:tables}.  Then 
\[
\delta^1_A=\begin{pmatrix}
0&{x}_{5}&0&0&{-{x}_{2}}\\
{-{x}_{5}}&0&0&0&{x}_{1}\\
0&0&0&{x}_{5}&{-{x}_{4}}\\
0&0&{-{x}_{5}}&0&{x}_{3}\\
{x}_{2}&{-{x}_{1}}&{x}_{4}&{-{x}_{3}}&0
\end{pmatrix}.
 \]
\end{example}

\begin{remark}
\label{rem:others}
The matrices $\delta^1_A$ also appear in recent 
work of De~Poi, Faenzi, Mezzetti, and Ranestad \cite{DFMR}, 
as well as Cardinali and Giuzzi \cite{CG}, though in both cases 
the geometric origin and the motivation for studying them is 
very much different from ours. 
\end{remark}

\subsection{Pfaffians and resonance}
\label{subsec:res pf}

By \eqref{eq:r1ka}, each resonance variety $\RR^1_k(A)$ 
is the vanishing locus of the codimension $k$ minors 
of the skew-symmetric matrix $\delta^1_A$. 
More generally, let $\theta$ be a skew-symmetric matrix of size 
$n\times n$ with entries in the polynomial ring $S=\k[x_1,\dots, x_n]$.   
Define the resonance varieties of $\theta$ as 
\begin{equation} 
\label{eq:resmat}
\RR_k(\theta)= V ( I_{n-k} (\theta) ),
\end{equation}
for $0\le k\le n-1$, and set $\RR_n(\theta)=\{0\}$. 
Put another way, the resonance varieties of a skew-symmetric matrix  
$\theta$ are the degeneracy loci of such a matrix.  The next result 
expresses these loci in terms of the Pfaffians of $\theta$.

\begin{theorem}
\label{theorem:be}
Let $\Pf_{2r}(\theta)$ be the ideal of $2r \times 2r$ Pfaffians of an 
$n\times n$ skew-symmetric matrix  $\theta$ with entries in $S$.  Then:
\begin{align} 
\label{eq:res theta}
&\RR_{2k}(\theta) =\RR_{2k+1}(\theta) =V(\Pf_{n-2k}(\theta)),&& \text{if $n$ is even},\\ \notag
&\RR_{2k-1}(\theta) =\RR_{2k}(\theta) =V(\Pf_{n-2k+1}(\theta)),&& \text{if $n$ is odd}.
\end{align}
\end{theorem}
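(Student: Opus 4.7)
The plan is to reduce both equalities to a single pointwise observation: \emph{the rank of a skew-symmetric matrix over a field is always even}. This forces the jump loci indexed by consecutive odd/even sizes to coincide, and ties them to Pfaffians.

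First I would record the relevant linear-algebra input. Given an $n\times n$ skew-symmetric matrix $M$ over a field, $\rank M$ is even, hence for every $r\ge 1$ with $2r\le n$ one has the chain of equivalences
\[
I_{2r-1}(M)=0 \same \rank M\le 2r-2 \same I_{2r}(M)=0 \same \Pf_{2r}(M)=0,
\]
where the last equivalence uses that a skew-symmetric matrix has rank $<2r$ precisely when all of its $2r\times 2r$ principal-minor Pfaffians vanish. Specializing the entries of $\theta\in \Mat_n(S)$ at each $\k$-point of $S=\k[x_1,\dots,x_n]$ and invoking this equivalence fiberwise gives the scheme-theoretic identity
\[
V(I_{2r-1}(\theta))=V(I_{2r}(\theta))=V(\Pf_{2r}(\theta))
\]
(the containment $\Pf_{2r}(\theta)^2\subseteq I_{2r}(\theta)$ together with the classical fact that each $2r\times 2r$ minor of a skew-symmetric matrix is, up to sign, a product of two $2r\times 2r$ Pfaffians, or a difference of such, provides an algebraic version at the level of radicals; for the statement as formulated --- equality of varieties --- the pointwise argument suffices).

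With this in hand, the theorem is a matter of parity bookkeeping applied to the definition $\RR_k(\theta)=V(I_{n-k}(\theta))$. Suppose first that $n$ is even. Then $n-2k$ is even, $n-2k-1$ is odd, and the chain above yields
\[
\RR_{2k}(\theta)=V(I_{n-2k}(\theta))=V(I_{n-2k-1}(\theta))=\RR_{2k+1}(\theta)=V(\Pf_{n-2k}(\theta)),
\]
taking $2r=n-2k$. Suppose instead that $n$ is odd. Then $n-2k+1$ is even, $n-2k$ is odd, and setting $2r=n-2k+1$ one obtains
\[
\RR_{2k-1}(\theta)=V(I_{n-2k+1}(\theta))=V(I_{n-2k}(\theta))=\RR_{2k}(\theta)=V(\Pf_{n-2k+1}(\theta)),
\]
exactly as claimed.

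The main (and essentially only) obstacle is the classical identification $V(I_{2r}(\theta))=V(\Pf_{2r}(\theta))$; the remainder of the argument is purely combinatorial, reindexing the various even/odd cases of $n-k$ according to the parity of $n$. I would therefore state the pointwise rank-parity equivalences as a short separate lemma before performing the bookkeeping, so that the two cases of the theorem fall out uniformly from a single computation with $2r$ equal to $n-2k$ when $n$ is even, and $n-2k+1$ when $n$ is odd.
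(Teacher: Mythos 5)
Your proof is correct, and it takes a genuinely different route from the paper's. The paper quotes the Buchsbaum--Eisenbud inclusions of ideals, $I_{2r}(\theta)\subseteq \Pf_{2r}(\theta)\subseteq \sqrt{I_{2r}(\theta)}$ and $I_{2r-1}(\theta)\subseteq \Pf_{2r}(\theta)$ (\cite[Cor.~2.6]{BE}), and deduces $V(I_{2r-1}(\theta))=V(I_{2r}(\theta))=V(\Pf_{2r}(\theta))$ from them; the parity bookkeeping at the end is the same in both arguments. Your replacement of the ideal-theoretic input by the fiberwise observation that a skew-symmetric matrix over a field has even rank, together with the fact that this rank is detected by principal $2r\times 2r$ Pfaffians (equivalently, by principal minors, since $\det=\Pf^2$ on principal submatrices), is elementary and self-contained, and it is entirely adequate for the statement as formulated, which concerns varieties, i.e., vanishing loci. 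What the Buchsbaum--Eisenbud route buys in exchange is information at the level of ideals rather than just their zero sets: the containments $I_{2r}\subseteq \Pf_{2r}\subseteq\sqrt{I_{2r}}$ are what justify taking the Pfaffian ideals as defining the scheme structure on $\mathbfcal{R}_k(\theta)$ in Remark \ref{rem:scheme-bis}, something your pointwise argument cannot see. Two small cautions: the identity you obtain is set-theoretic, not ``scheme-theoretic'' as you momentarily call it (the three ideals are in general distinct, so only the reduced structures agree --- you do acknowledge this in the parenthesis); and the parenthetical claim that every $2r\times 2r$ minor of a skew-symmetric matrix is, up to sign, a product or difference of products of two Pfaffians is stated too loosely --- the correct statement is that such a minor is a signed sum of products of Pfaffians of principal submatrices, which is exactly what gives $I_{2r}\subseteq\Pf_{2r}$ --- but since you only offer it as an optional strengthening, this does not affect the validity of your argument.
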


\begin{proof}
As shown by Buchsbaum and Eisenbud  \cite[Cor.~2.6]{BE}, 
the following inclusions hold, for each $r\ge 1$:
\begin{equation} 
\label{eq:pfaff res}
I_{2r}(\theta) \subseteq \Pf_{2r}(\theta) \subseteq \sqrt {I_{2r}(\theta)}, \text{ and } \:
I_{2r-1}(\theta) \subseteq \Pf_{2r}(\theta).
\end{equation}
Consequently, $V(I_{2r-1}(\theta)) = V(I_{2r}(\theta)) = V(\Pf_{2r}(\theta))$, 
and the claim follows. 
\end{proof}

Note that the ideal 
$\Pf_n(\theta)$ is principal, generated by $\pf(\theta)$, the 
maximal Pfaffian of $\theta$, which equals $0$ if $n$ is odd.
Thus, if $n$ is even and $\theta$ is non-singular, then 
$\RR_1(\theta)=\RR_0(\theta)=V(\pf(\theta))$ is a hypersurface, 
while if $\theta$ is singular, then $\RR_1(\theta)=\k^n$.  
On the other hand, if $n$ is odd, then  
$\RR_1(\theta)=\RR_2(\theta)=V(\Pf_{n-1}(\theta))$. 

\begin{remark}
\label{rem:scheme-bis}
We shall view the scheme structure for $\mathbfcal{R}_{k}(\theta)$ 
as being defined by the Pfaffian ideals from \eqref{eq:res theta}.
\end{remark}

Let us return now to the case when $A$ is a $\PD_3$ algebra 
and $\theta=\delta^1_A$ is the boundary map from \eqref{eq:koszul 3mfd}. 
In that case, the matrix $\delta^1_A$ is singular, since $\delta^1_A\circ\delta^0_A=0$. 
Therefore, we have the following chain of inclusions for the varieties 
$\RR^1_k=\RR^1_k(A)$:
\begin{align}
\label{eq:resvarsinc}
&A^1=\RR^1_0=\RR^1_1 \supseteq \RR^1_2=\RR^1_3\supseteq \RR^1_4 =
 \cdots &&\text{if $b_1(A)$ is even},
\\    \notag
&A^1=\RR^1_0\supseteq \RR^1_1= \RR^1_2\supseteq \RR^1_3=\RR^1_4
 \supseteq \cdots &&\text{if $b_1(A)$ is odd}.
\end{align}

\subsection{Bottom-depth resonance}
\label{subsec:bottom}

We conclude this section with a vanishing result for the bottom 
resonance varieties of a $\PD_3$ algebra whose associated $3$-form 
is irreducible.  

\begin{theorem}
\label{thm:rvanish}
Let $A$ be a $\PD_3$ algebra. If  $\mu_A$ has maximal 
rank $n\ge 3$, then 
\begin{equation}
\label{eq:vanish}
\RR^1_{n-2}(A)=\RR^1_{n-1}(A)=\RR^1_{n}(A)=\{0\}.
\end{equation}
\end{theorem}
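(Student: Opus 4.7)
The equality $\RR^1_n(A) = \{0\}$ is already recorded by the general filtration \eqref{eq:resfilt}, since $b_1(A) = n$. So the real content is to show $\RR^1_{n-1}(A) = \RR^1_{n-2}(A) = \{0\}$. My approach is to invoke Theorem \ref{theorem:be} with $\theta = \delta^1_A$ and then reduce to a short linear-algebraic fact about the trivector $\mu_A$.

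Applying Theorem \ref{theorem:be}, I would separate the two parities. When $n$ is even, taking $2k = n - 2$ gives $\RR^1_{n-2}(A) = \RR^1_{n-1}(A) = V(\Pf_2(\theta))$; when $n$ is odd, taking $2k - 1 = n - 2$ gives $n - 2k + 1 = 2$ and therefore the same identification. Since the $2\times 2$ Pfaffian of a skew-symmetric matrix is just its $(i,j)$ entry (up to sign), the ideal $\Pf_2(\theta)$ coincides with $I_1(\theta)$, the ideal generated by the entries of $\delta^1_A$. Thus in both parities the problem reduces to showing that the common vanishing locus of the entries of $\delta^1_A$ is the origin.

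Unwinding the formulas \eqref{eq:delta1 3m}, the entries of $\delta^1_A$ are the linear forms $\sum_j \mu_{jik}\, x_j$, whose value at $a = \sum a_j e_j$ is $\mu_A(a \wedge e_i \wedge e_k)$. Hence $a \in V(I_1(\delta^1_A))$ if and only if the bilinear form $(b,c) \mapsto \mu_A(a \wedge b \wedge c)$ vanishes identically on $A^1 \times A^1$. The final step is to argue that maximality of $\rank \mu_A = n$ forces any such $a$ to be zero. Suppose for contradiction that some nonzero $a$ had this property, and pick a hyperplane $W \subset A^1$ complementary to $\k\cdot a$. By skew-symmetry together with the assumed vanishing, $\mu_A$ kills every wedge in $\bwedge^3 A^1$ involving $a$, so $\mu_A$ factors through $\bwedge^3 W$. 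This yields $\rank \mu_A \le \dim W = n - 1$, contradicting maximality, and completes the argument.

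I do not expect any serious obstacle here: once the Pfaffian reduction is in place, the rest is a clean chain of identifications. The only delicate point is the parity bookkeeping needed to confirm that both $\RR^1_{n-1}(A)$ and $\RR^1_{n-2}(A)$ collapse to $V(\Pf_2(\theta))$ rather than to the vanishing locus of some higher-order Pfaffian.
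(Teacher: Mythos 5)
Your proof is correct and follows essentially the same route as the paper: both reduce, via Theorem \ref{theorem:be} together with \eqref{eq:r1ka}, to showing that the linear entries of the skew-symmetric matrix $\delta^1_A$ cut out only the origin, and both derive this from the maximality of $\rank \mu_A$. Your closing step (a nonzero common zero $a$ of the entries would make $\mu_A$ factor through a complement of $\k\cdot a$, contradicting $\rank\mu_A=n$) is in fact spelled out somewhat more carefully than the paper's one-line justification that each variable occurs in some entry of $\delta^1_A$.
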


\begin{proof}
Clearly, $\RR^1_n(A)=\{0\}$. Let $\delta^1=\delta^1_A$ be the 
differential from \eqref{eq:delta1 3m}. 
By \eqref{eq:res theta} and \eqref{eq:r1ka}, we have that 
\[
\RR^1_{n-2}(A) = \RR^1_{n-1}(A) = V(I_1(\delta^1)).
\]

To complete the proof, it suffices to show that $\sqrt{I_1(\delta^1)}=\m$, 
where $\m=\langle x_1,\dots ,x_n\rangle$ is the maximal ideal at $0$.  
By \eqref{eq:koszul 3mfd} all entries of the matrix $\delta^1$ belong to 
$\m$, and so $\sqrt{I_1(\delta^1)}\subseteq \m$.  
Since, by assumption, the form $\mu_A$ has rank $n$, each variable $x_i$ 
occurs in some entry of $\delta^1_A$, and thus equality holds.
\end{proof}

Combining now Theorems  \ref{thm:nonmax} and \ref{thm:rvanish},
we obtain the following immediate corollary.  

\begin{corollary}
\label{cor:bnonmax}
Let $A$ be a $\PD_3$ algebra, and decompose it as $A=B\, \#\, C$, where 
$\mu_B$ is irreducible and $\mu_C=0$.  If $n=\dim A^1$ is at least $3$, then 
$\RR^1_{n-2}(A)=\RR^1_{n-1}(A)=C^1$.
\end{corollary}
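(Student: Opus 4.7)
The plan is to combine the connected-sum decomposition provided by Theorem \ref{thm:nonmax} with the bottom-resonance vanishing in Theorem \ref{thm:rvanish}. Set $m=\dim B^1$ and $r=\dim C^1=\corank \mu_A$, so that $n=m+r$. Theorem \ref{thm:nonmax} gives
\[
\RR^1_k(A) \cong \RR^1_{k-r+1}(B) \times C^1 \,\cup\, \RR^1_{k-r}(B) \times \{0\}
\]
for every $k\ge 0$, and I would specialize this formula to the two values $k=n-1$ and $k=n-2$.

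With these choices, the shifted indices $k-r+1$ and $k-r$ become $(m,m-1)$ and $(m-1,m-2)$, respectively. So the corollary will follow as soon as I verify that
\[
\RR^1_{m-2}(B) = \RR^1_{m-1}(B) = \RR^1_m(B) = \{0\},
\]
since then each of the two displayed unions collapses to $\{0\}\times C^1 \cup \{0\}\times \{0\}=C^1$. When $m\ge 3$, the form $\mu_B$ is irreducible of maximal rank $m$, so this triple vanishing is exactly the content of Theorem \ref{thm:rvanish} applied to $B$.

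The only remaining issue, which I view as the main (albeit minor) obstacle, is to rule out (or handle) the possibility $m<3$. Since no nonzero alternating $3$-form exists on a vector space of dimension $\le 2$, the irreducibility hypothesis on $\mu_B$ forbids $m\in\{1,2\}$, leaving only the degenerate case $m=0$, i.e., $\mu_A=0$ and $A=C$. In that case $B^1=0$, the indices $m,m-1,m-2$ are nonpositive, and by the convention of Section \ref{subsec:rv} we have $\RR^1_j(B)=B^1=\{0\}$ for all $j\le 0$; so the required vanishing again holds and the same conclusion $\RR^1_{n-2}(A)=\RR^1_{n-1}(A)=C^1$ is recovered. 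No ingredients beyond Theorems \ref{thm:nonmax} and \ref{thm:rvanish} are needed; the proof is essentially bookkeeping with the index shift and a quick dimension argument for irreducibility.
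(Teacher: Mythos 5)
Your proposal is correct and follows exactly the paper's route: the paper derives this corollary by ``combining Theorems \ref{thm:nonmax} and \ref{thm:rvanish},'' and your index bookkeeping (specializing the connected-sum formula at $k=n-1,\,n-2$ and invoking the triple vanishing $\RR^1_{m-2}(B)=\RR^1_{m-1}(B)=\RR^1_m(B)=\{0\}$ for the irreducible factor) is precisely the omitted verification. Your explicit treatment of the degenerate case $\mu_A=0$ is a reasonable extra precaution not spelled out in the paper.
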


\section{Top-depth resonance of \texorpdfstring{$\PD_3$}{PD3} algebras}
\label{sect:top-depth}

In this section we study the geometry of the top-depth resonance varieties 
of a $\PD_3$ algebra, with special emphasis on the case when the associated 
$3$-form satisfies certain genericity conditions.

\subsection{Determinants and Pfaffians}
\label{subsec:det-pfaff}
Let $A$ be a $\PD_3$ algebra over $\k$.   As before, identify $S=\Sym(A_1)$ 
with $\k[x_1,\dots,x_n]$, where $n=b_1(A)$, and let  
$\delta^1=\delta^1_A\colon A^1\otimes_{\k} S\to A^2\otimes_{\k} S$ be the 
first differential in the cochain complex $\bL(A)$.   In the previously chosen 
bases for $A^1$ and $A^2$, the matrix 
of $\delta^1$ is skew-symmetric.  Furthermore, $\delta^1$ is 
singular, since the vector $(x_1,\dots, x_n)$ is in its kernel.  
Hence, both its determinant $\det(\delta^1)$ and its Pfaffian 
$\pf(\delta^1)$ vanish. 

In \cite[Ch.~III, Lemmas 1.2 and 1.3.1]{Tu}, Turaev shows how 
to remedy this situation, so as to obtain well-defined determinant 
and Pfaffian polynomials for the form $\mu=\mu_A$ by looking at 
codimension $1$ minors of the associated matrix $\delta^1$. 

\begin{lemma}[\cite{Tu}]
\label{lem:turaev}
Suppose $n\ge 3$.  There is then a polynomial 
$\Det(\mu)\in S$ such that, if $\delta^1(i;j)$ is the sub-matrix 
obtained from $\delta^1$ by deleting the $i$-th row and $j$-th 
column, then 
\begin{equation*}
\label{eq:detmu}
\det \delta^1(i;j) = (-1)^{i+j}x_ix_j \Det(\mu).
\end{equation*}
Moreover, if $n$ is even, then $\Det(\mu)=0$, while if $n$ is odd, 
then $\Det(\mu)=\Pf(\mu)^2$, where $\pf (\delta^1(i;i)) = (-1)^{i+1} x_i \Pf(\mu)$. 
\end{lemma}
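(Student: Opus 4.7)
The plan is to exploit the two structural features of $\theta:=\delta^1_A$: it is skew-symmetric, and the vector $v=(x_1,\dots,x_n)^T$ lies in its kernel (a direct consequence of $\delta^1_A\circ\delta^0_A=0$, or, in coordinates, of the total antisymmetry of $\mu_{ijk}$). The main tool will be the cofactor identity $\theta\cdot\operatorname{adj}(\theta)=\det(\theta)\cdot I$ together with symmetry/parity properties of the adjugate.

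First, I would record the parity of $\operatorname{adj}(\theta)$. From the general identities $\operatorname{adj}(M^T)=\operatorname{adj}(M)^T$ and $\operatorname{adj}(cM)=c^{n-1}\operatorname{adj}(M)$, applied to $\theta^T=-\theta$, one gets $\operatorname{adj}(\theta)^T=(-1)^{n-1}\operatorname{adj}(\theta)$, so $\operatorname{adj}(\theta)$ is symmetric when $n$ is odd and skew-symmetric when $n$ is even. Next, I would pass to the fraction field $K=\k(x_1,\dots,x_n)$ and use that the rank of a skew-symmetric matrix over a field is even; since $v\neq 0$ lies in $\ker\theta$, the rank of $\theta$ over $K$ is at most $n-1$.

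\textbf{Case $n$ even:} The rank is even and $\leq n-1$, hence $\leq n-2$. Therefore every $(n-1)\times (n-1)$ minor of $\theta$ vanishes in $K$, and consequently in $S$. Setting $\Det(\mu):=0$ satisfies the displayed formula trivially.

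\textbf{Case $n$ odd:} Either $\theta$ has rank $<n-1$ over $K$, in which case $\operatorname{adj}(\theta)=0$ and all Pfaffians $\pf(\theta(i;i))$ vanish (so $\Det(\mu)=\Pf(\mu)=0$ works); or $\theta$ has rank exactly $n-1$ over $K$, and $\operatorname{adj}(\theta)$ is a nonzero symmetric rank-one matrix over $K$ each of whose columns lies in $\ker(\theta)_K=K\cdot v$. A brief check shows any such matrix has the form $D\cdot v v^T$ for a unique $D\in K$, giving $\operatorname{adj}(\theta)_{ij}=D\,x_i x_j$, which unpacks to $\det\theta(i;j)=(-1)^{i+j}D\,x_i x_j$ for all $i,j$. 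Since $D\,x_1^2$ and $D\,x_2^2$ both lie in $S$ and $\gcd(x_1^2,x_2^2)=1$ in the UFD $S$, it follows that $D\in S$; this is the desired polynomial $\Det(\mu)$.

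For the Pfaffian refinement, I would invoke the classical identity that for an odd-sized skew-symmetric matrix $M$ the vector $u$ with entries $u_i=(-1)^{i+1}\pf(M(i;i))$ lies in $\ker M$ (verifiable by a Pfaffian analog of Laplace expansion, or checked directly for small $n$). Applied to $\theta$, this yields $u=\Pf(\mu)\cdot v$ in $K^n$ for some $\Pf(\mu)\in K$. From $\pf(\theta(i;i))^2=\det\theta(i;i)=D\,x_i^2$ and the UFD property, $x_i$ divides $\pf(\theta(i;i))$ in $S$, which forces $\Pf(\mu)\in S$ and $\Pf(\mu)^2=D=\Det(\mu)$, as required. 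The main obstacle in this scheme is the descent from rational identities over $K$ to polynomial identities over $S$: one must argue carefully that $D$, and then $\Pf(\mu)$, actually lie in $S$, where the UFD structure of $S$ and the symmetry of $\operatorname{adj}(\theta)$ carry the weight; the classical Pfaffian kernel identity is the other moving piece, but it is standard.
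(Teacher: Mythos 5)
Your argument is correct and complete. Note first that the paper does not prove this lemma at all: it is quoted verbatim from Turaev's book (Ch.~III, Lemmas 1.2 and 1.3.1 of \cite{Tu}), so there is no in-paper proof to compare against. Your route --- exploiting that $\theta=\delta^1_A$ is skew-symmetric with $v=(x_1,\dots,x_n)^{\top}$ in its kernel, then reading off the $(n-1)$-minors from $\theta\cdot\operatorname{adj}(\theta)=\det(\theta)\,I$, the parity identity $\operatorname{adj}(\theta)^{\top}=(-1)^{n-1}\operatorname{adj}(\theta)$, and the evenness of the rank of a skew form over the fraction field $K$ --- is essentially the classical argument, and all the steps check out: the rank-one symmetric adjugate with columns in $K\cdot v$ must be $D\,vv^{\top}$, the descent of $D$ from $K$ to $S$ via the coprimality of $x_1^2$ and $x_2^2$ in the UFD $S$ is valid (here $n\ge 3\ge 2$ is used), and the degenerate subcase $\operatorname{rank}<n-1$ is handled correctly since $\pf(\theta(i;i))^2=\det\theta(i;i)=0$ forces the sub-Pfaffians to vanish in the domain $S$. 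The one external ingredient you lean on is the classical sub-Pfaffian kernel identity, that $u_i=(-1)^{i+1}\pf(M(i;i))$ defines a kernel vector of an odd-order skew-symmetric $M$; this is indeed standard (it follows from the Pfaffian expansion along a row) and you correctly flag it as such, and it is exactly what pins down the global scalar $\Pf(\mu)$ with consistent signs, after which $\Pf(\mu)\in S$ and $\Pf(\mu)^2=\Det(\mu)$ follow as you say. In short: a sound, self-contained proof of a statement the paper only cites.
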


\begin{remark}
\label{rem:deg}
If $n$ is odd, then $\Det(\mu)$ is a homogeneous 
polynomial of degree $n-3$, while $\Pf(\mu)$ is a homogeneous 
polynomial of degree $(n-3)/2$.  
\end{remark}

Let us note the following immediate corollary to Lemma \ref{lem:turaev}.

\begin{corollary}
\label{cor:minors}
With notation as above, let $\m$ be the maximal ideal of $S$ at $0$.  Then 
\[
I_{n-1}(\delta^1)=\begin{cases}
0 & \text{if $n$ is even},\\[2pt]
\m^2\cdot  (\Pf(\mu)^2) & \text{if $n$ is odd}.
\end{cases}
\]
\end{corollary}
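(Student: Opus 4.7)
The plan is to derive this corollary as a direct consequence of Lemma~\ref{lem:turaev}, with essentially no extra input beyond unwinding definitions. Since the content is almost immediate, the main task is to organize the case analysis cleanly.

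First I would recall that, by definition, $I_{n-1}(\delta^1)$ is the ideal of $S$ generated by all $(n-1)\times(n-1)$ minors of $\delta^1$. Such a minor is obtained by choosing $n-1$ rows and $n-1$ columns of $\delta^1$, equivalently by deleting one row and one column; thus the generating set is exactly
\[
\bigl\{ \det \delta^1(i;j) \,:\, 1\le i,j\le n \bigr\}.
\]

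Next I would apply Lemma~\ref{lem:turaev} to rewrite each such generator as $(-1)^{i+j}x_ix_j\,\Det(\mu)$. Pulling the common factor $\Det(\mu)$ out, this shows
\[
I_{n-1}(\delta^1) \;=\; \Det(\mu)\cdot\bigl\langle x_ix_j : 1\le i,j\le n\bigr\rangle \;=\; \Det(\mu)\cdot \m^2,
\]
since $\{x_ix_j\}$ generates $\m^2$.

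Finally I would split into cases according to the parity of $n$. If $n$ is even, Lemma~\ref{lem:turaev} gives $\Det(\mu)=0$, so $I_{n-1}(\delta^1)=0$. If $n$ is odd, the same lemma gives $\Det(\mu)=\Pf(\mu)^2$, and therefore $I_{n-1}(\delta^1)=\m^2\cdot(\Pf(\mu)^2)$, as claimed. There is no real obstacle here; the only point worth being careful about is making sure all $n^2$ codimension-one minors are accounted for (not only the principal ones $\det\delta^1(i;i)$), so that the full factor $\m^2$, and not merely the ideal $(x_1^2,\dots,x_n^2)$, appears.
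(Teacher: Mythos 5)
Your proof is correct and is exactly the argument the paper intends: the paper presents this as an immediate consequence of Lemma~\ref{lem:turaev}, and your unwinding of the $(n-1)\times(n-1)$ minors as the $\det\delta^1(i;j)$, followed by the parity split on $\Det(\mu)$, is that argument spelled out. Your closing remark about needing all $n^2$ minors (not just the principal ones) to obtain the full factor $\m^2$ is a correct and worthwhile point of care.
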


We illustrate these notions with a simple example.

\begin{example}
\label{ex:surf}
Let $A=H^{\hdot}(\Sigma_g\times S^1,\k)$, where $\Sigma_g$  
is a Riemann surface of genus $g\ge 1$.  The corresponding $3$-form 
on $A^1=\k^{2g+1}$ is $\mu=\sum_{i=1}^g a_i b_i c$, while $\Pf(\mu)=x_{2g+1}^{g-1}$.  
See also Example \ref{ex:delta1} for the case $g=2$.
\hfill $\Diamond$
\end{example}

\subsection{Generic forms}
\label{subsec:BP-generic}

The alternating $3$-forms from Example \ref{ex:surf} fit into the more 
general class of `generic' $3$-forms, a class introduced and studied 
by Berceanu and Papadima in \cite{BP}. 
For our purposes, it will be enough to consider the case when $n=2g+1$,  
for some $g\ge 1$.  

We say that a $3$-form $\mu\colon \bwedge^3 V\to \k$ is 
{\em BP-generic}\/ if there is an element $v\in V$ such that the $2$-form 
$\gamma_v\in V^*\wedge V^*$ defined by 
\begin{equation}
\label{eq:2form}
\gamma_v(a \wedge b)=\mu_A(a\wedge b\wedge v) \quad\text{for $a,b\in V$}
\end{equation} 
has rank $2g$, that is, $\gamma^{g}_v\ne 0$ 
in $\bwedge^{2g} V^*$.  Equivalently, in a suitable basis for $V$, 
we may write
\begin{equation}
\label{eq:genmu}
\mu=\sum_{i=1}^g a_i \wedge b_i \wedge v + 
\sum w_{ijk}\, z_i \wedge z_j \wedge z_k,
\end{equation}
where each $z_i$  belongs to the span of $a_1,b_1,\dots , a_g, b_g$ in $V$, 
and the coefficients $w_{ijk}$ are in $\k$.  

The following lemma, which was first suggested by S.~Papadima, was 
recorded in \cite[Remark 5.2]{DS} (see also \cite[Remark 4.5] {DPS-mz}). 
For completeness, we supply a proof, in this slightly more general context.

\begin{lemma}
\label{lem:generic}
Assume that $n$ is odd and greater than $1$.  Then 
$\RR^1_1(A)\ne A^1$ if and only if $\mu_A$ is BP-generic.  
\end{lemma}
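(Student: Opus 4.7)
The plan is to show that both sides of the asserted equivalence are controlled by a single polynomial, namely Turaev's Pfaffian $\Pf(\mu_A)$ from Lemma \ref{lem:turaev}: each condition will be reformulated as the statement that $\Pf(\mu_A)$ is a nonzero element of $S$. All the analytic ingredients are already available in Lemma \ref{lem:turaev}, Corollary \ref{cor:minors}, and Theorem \ref{theorem:be}; the argument is essentially a bookkeeping exercise that ties them together.

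First, I would translate the hypothesis $\RR^1_1(A)\ne A^1$ into an algebraic statement. By \eqref{eq:r1ka} we have $\RR^1_1(A)=V(I_{n-1}(\delta^1_A))$, and since $n$ is odd, Corollary \ref{cor:minors} gives $I_{n-1}(\delta^1_A)=\m^2\cdot(\Pf(\mu_A)^2)$. Consequently
\[
\RR^1_1(A)=V(\m)\cup V(\Pf(\mu_A))=\{0\}\cup V(\Pf(\mu_A)),
\]
so $\RR^1_1(A)\ne A^1$ if and only if $\Pf(\mu_A)$ is a nonzero polynomial in $S$.

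Next, I would identify the contracted $2$-form $\gamma_v$ with the specialization $\delta^1_A\big|_{x=v}$, up to sign. Writing $v=\sum_j v_j e_j$ and using \eqref{eq:delta1 3m}, the matrix of $\delta^1_A$ in the chosen basis has $(k,i)$-entry $\sum_j\mu_{jik}x_j$, while $\gamma_v$ has $(a,b)$-entry $\sum_j\mu_{abj}v_j$. The cyclic symmetry $\mu_{abj}=\mu_{jab}$ of the alternating form then shows that these two skew-symmetric $n\times n$ matrices over $\k$ coincide (up to transpose, hence up to sign), so they have the same rank and the same principal Pfaffians.

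Now I would combine these two observations. Since $\gamma_v$ is skew-symmetric and $n=2g+1$ is odd, its rank is at most $2g$, and the value $2g$ is achieved precisely when some $(n-1)\times(n-1)$ principal sub-Pfaffian of $\delta^1_A\big|_{x=v}$ is nonzero. By Lemma \ref{lem:turaev}, those sub-Pfaffians are exactly $(-1)^{i+1}v_i\,\Pf(\mu_A)(v)$ for $i=1,\dots,n$. Hence such a $v\in A^1$ exists if and only if $\Pf(\mu_A)$ is a nonzero polynomial: one direction picks $v$ with $\Pf(\mu_A)(v)\ne 0$ and notes that some coordinate $v_i$ must also be nonzero, while the other direction is immediate. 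Chaining the two equivalences,
\[
\RR^1_1(A)\ne A^1\iff \Pf(\mu_A)\not\equiv 0\iff \mu_A\text{ is BP-generic}.
\]

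The only real obstacle is the compatibility check between $\gamma_v$ and $\delta^1_A\big|_{x=v}$: one must be careful with the ordering of indices in $\mu_{ijk}$ and with the sign of the transpose so as to conclude that their principal Pfaffians agree up to sign. After that identification, the lemma is a direct consequence of Lemma \ref{lem:turaev} and Corollary \ref{cor:minors}, with no further input required.
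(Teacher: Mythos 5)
Your proof is correct in substance, but it takes a genuinely different route from the paper. The paper's argument is a short, direct one using only Poincar\'{e} duality and the definition of resonance: if $c\notin \RR^1_1(A)$, then $ac\ne 0$ for every $a$ not proportional to $c$, and setting $b=(ac)^{\vee}$ gives $\mu_A(a\wedge b\wedge c)=\pm 1\ne 0$, so the radical of $\gamma_c$ is $\k c$ and $\gamma_c$ has rank $2g$; the converse is obtained by reversing the steps. Your argument instead routes everything through the BGG matrix and Turaev's Pfaffian: you establish the identification $\gamma_v=-\delta^1_A\big|_{x=v}$ (the index bookkeeping via cyclic symmetry $\mu_{jik}=\mu_{ikj}$ is right), and then use Corollary \ref{cor:minors} and Lemma \ref{lem:turaev} to reduce both sides of the equivalence to the non-vanishing of $\Pf(\mu_A)$. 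This is heavier machinery, but it buys more: you obtain along the way the explicit description $\RR^1_1(A)=\{0\}\cup V(\Pf(\mu_A))$, which is essentially the content of the odd case of Theorem \ref{thm:respd3}; in the paper that theorem is deduced \emph{from} the lemma, whereas you prove both at once (with no circularity, since Lemma \ref{lem:turaev} and Corollary \ref{cor:minors} do not depend on the lemma). One small caution: your middle term ``$\Pf(\mu_A)\not\equiv 0$ as a polynomial'' is equivalent to the two pointwise conditions only over an infinite field, since a nonzero polynomial may vanish identically on $\k^n$ when $\k$ is finite. This is harmless here --- both endpoints of your chain are equivalent to the single pointwise statement ``there exists $v\ne 0$ with $\Pf(\mu_A)(v)\ne 0$,'' so you can simply use that as the middle term --- but as written the two implications into and out of the polynomial condition do not quite match over finite fields. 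The paper's direct argument avoids this issue entirely.
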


\begin{proof}
Suppose there is a class $c\in A^1$ such that $c\notin \RR^1_1(A)$.  
Then, for any class  $a\in A^1$ which is not a multiple of $c$, we have 
that $a c\ne 0$.  Letting  $b=(ac)^{\vee}\in A^1$, we infer that 
$\mu_A(a\wedge b\wedge c)$ is non-zero.  
It follows that the $2$-form $\gamma_c$  from \eqref{eq:2form} 
defines a symplectic form on a complementary subspace to the 
vector $c\in A^1$, thereby showing that $\mu_A$ is BP-generic. 
Backtracking through this argument proves the reverse implication. 
\end{proof}

\subsection{The top resonance variety of a $\PD_3$ algebra}
\label{subsec:r11}
We are now in a position to describe fairly explicitly  
the first resonance variety of a $3$-dimensional Poincar\'{e} duality 
algebra.   

\begin{theorem}
\label{thm:respd3}
Let $A$ be a $\PD_3$ algebra over a field $\k$. Set $n=\dim A^1$ 
and let $\mu=\mu_A$ be the associated $3$-form. 
Then 
\begin{equation}
\label{eq:r1 pd3}
\RR^1_1(A)=
\begin{cases}
\emptyset & \text{if\/ $n=0$};\\
\{0\} & \text{if\/ $n=1$ or $n=3$ and $\mu$ has rank $3$};\\
V(\Pf(\mu))  & \text{if\/ $n$ is odd, $n>3$, and $\mu$ is BP-generic};\\
A^1 & \text{otherwise}.
\end{cases}
\end{equation}
\end{theorem}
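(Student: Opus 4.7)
The strategy is a case analysis on $n=\dim A^1$ and on the rank and genericity of $\mu=\mu_A$. The cases $n=0$ and $n=1$ follow directly from the definitions: when $n=0$ there are no points at all, and when $n=1$ graded commutativity forces $e_1^2=0$, so $\delta^1_a=0$ while $\delta^0_a$ is injective for $a\ne 0$, giving $H^1(A,\delta_a)=0$ and hence $\RR^1_1(A)=\{0\}$. Whenever $\mu_A$ fails to have maximal rank, Theorem \ref{thm:nonmax} decomposes $A\cong B\# C$ with $b_1(C)>0$, and Corollary \ref{cor:resb1} yields $\RR^1_1(A)=A^1$; this disposes of $n=2$ (where $\bwedge^3 A^1=0$ forces $\mu=0$) and of $n=3$ with $\mu$ not of rank $3$ (which, since $\dim \bwedge^3 \k^3=1$, also forces $\mu=0$).

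For the remaining situations I would split on the parity of $n$. If $n$ is even, the chain of inclusions \eqref{eq:resvarsinc} immediately gives $\RR^1_1(A)=\RR^1_0(A)=A^1$, with no further hypothesis on $\mu$. If $n=3$ and $\mu$ has maximal rank, Theorem \ref{thm:rvanish} with $n=3$ specializes to $\RR^1_1(A)=\{0\}$.

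The essential case is $n\ge 5$ odd with $\mu$ of maximal rank. Lemma \ref{lem:generic} supplies a clean dichotomy: $\mu$ is BP-generic if and only if $\RR^1_1(A)\ne A^1$. When $\mu$ is not BP-generic, $\RR^1_1(A)=A^1$, matching the ``otherwise'' clause. When $\mu$ is BP-generic, I would invoke \eqref{eq:r1ka} to write $\RR^1_1(A)=V(I_{n-1}(\delta^1_A))$ and then Corollary \ref{cor:minors} to identify $I_{n-1}(\delta^1_A)=\m^2\cdot (\Pf(\mu))^2$; consequently
\[
\RR^1_1(A)=V(\m^2)\cup V(\Pf(\mu)^2)=\{0\}\cup V(\Pf(\mu)).
\]
Since $\mu$ is BP-generic, Lemma \ref{lem:generic} forbids this set from equalling $A^1$, so $\Pf(\mu)\ne 0$; being homogeneous of positive degree $(n-3)/2$, its vanishing locus already contains the origin, and the union collapses to $V(\Pf(\mu))$.

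The main ``obstacle'' is almost entirely bookkeeping: verifying that the ``otherwise'' clause exhaustively absorbs the residual possibilities (non-BP-generic odd $n>3$, even $n\ge 2$, and every rank-deficient $\mu$). Once that enumeration is in hand, the only genuinely non-trivial ingredient is the juxtaposition of the Pfaffian identity of Corollary \ref{cor:minors} with the Lemma \ref{lem:generic} dichotomy, both of which are already established; no further calculation is required.
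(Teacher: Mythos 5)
Your proof is correct and, in the essential case ($n$ odd, $n>3$), follows the paper's argument exactly: both hinge on the identity $I_{n-1}(\delta^1_A)=\m^2\cdot(\Pf(\mu)^2)$ from Corollary \ref{cor:minors} combined with the dichotomy of Lemma \ref{lem:generic}, after which $V(\m^2)\cup V(\Pf(\mu))$ collapses to $V(\Pf(\mu))$. The peripheral cases are routed through equivalent results (Theorem \ref{thm:rvanish} for $n=3$ of maximal rank, the chain \eqref{eq:resvarsinc} for $n$ even, the connected-sum decomposition for non-maximal rank) rather than reading everything off Corollary \ref{cor:minors} as the paper does; this is perfectly legitimate. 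One small citation slip: in the two sub-cases you dispose of via Corollary \ref{cor:resb1} ($n=2$, and $n=3$ with $\mu$ not of rank $3$) one has $\mu_A=0$, so the factor $B$ produced by Theorem \ref{thm:nonmax} has $b_1(B)=0$ and the hypotheses of that corollary fail; you should instead invoke Corollary \ref{cor:res-corank} (here $\corank\mu_A=n\ge 2>1$), which yields $\RR^1_1(A)=A^1$ directly.
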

\begin{proof}

If $n\le 2$, then $\mu=0$, and the conclusion is immediate. 
So suppose $n\ge 3$, and let $\delta^1=\delta^1_A$ be the 
skew-symmetric matrix associated to $\mu$, as in \eqref{eq:koszul 3mfd}.  
Recall from \eqref{eq:r1ka} that $\RR^1_1(A)=V(I_{n-1}(\delta^1))$.

If $n$ is even, then, by Corollary \ref{cor:minors}, 
$I_{n-1}(\delta^1)=0$, and so $\RR^1_1(A)=A^1$.

If $n$ is odd, then again by Corollary \ref{cor:minors}, 
$I_{n-1}(\delta^1)=\m^2\cdot  (\Pf(\mu)^2)$. 
On the other hand, by Lemma \ref{lem:generic}, 
$I_{n-1}(\delta^1)$ is non-zero if and only if $\mu$ is BP-generic.   
In this case, either $n=3$ and so $\Pf(\mu)=1$ and $\RR^1_1(A)=\{0\}$, 
or $n>3$ and $\RR^1_1(A)=V(\Pf(\mu))$ is a hypersurface of degree $(n-3)/2$.
This completes the proof.
\end{proof}

As a corollary, we recover a closely related result, proved by Draisma and Shaw 
in \cite[Thm.~3.2]{DrS10} by very different methods.
 
\begin{corollary}[\cite{DrS10}]
\label{ren:draisma-shaw}
Let $V$ be a vector space of odd dimension $n\ge 5$ over a field $\k$ and let
$\mu\in \bigwedge^3 V^*$. Then the union of all $\mu$-singular planes is either 
all of $V$ or a hypersurface defined by a homogeneous polynomial in
$\k[V]$ of degree $(n-3)/2$.
\end{corollary}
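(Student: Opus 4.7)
The plan is to reduce this corollary to Theorem \ref{thm:respd3} by constructing the $\PD_3$ algebra naturally associated to $\mu$ and translating the statement about $\mu$-singular planes into a statement about resonance.

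First, I would invoke the construction recalled in \S\ref{subsec:alt-forms} (and formalized in Theorem \ref{thm:pd3-iso}) to associate to $\mu\in \bwedge^3 V^*$ a $\PD_3$ algebra $A$ with $A^1 = V$ and $\mu_A = \mu$. This is the pivot that lets us deploy resonance machinery on a form given in isolation.

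Next, I would identify the union of $\mu$-singular planes with $\RR^1_1(A)$. By Lemma \ref{lem:sing}, a subspace $U \subseteq A^1$ is $2$-singular with respect to $\mu_A$ if and only if it is isotropic with respect to the algebra multiplication; in particular, the $\mu$-singular planes are exactly the isotropic planes in $A^1 = V$. By Lemma \ref{lem:isotropic}\eqref{res2}, the union of all isotropic planes in $A^1$ equals $\RR^1_1(A)$. Therefore, the set we wish to describe coincides with $\RR^1_1(A)$.

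Now I would apply Theorem \ref{thm:respd3}. Since $n$ is odd and $n\ge 5$, only the last two cases of \eqref{eq:r1 pd3} can occur. If $\mu$ is not BP-generic, then $\RR^1_1(A) = A^1 = V$, giving the first alternative. If $\mu$ is BP-generic, then $\RR^1_1(A) = V(\Pf(\mu))$, and by Remark \ref{rem:deg} the polynomial $\Pf(\mu)\in \k[V]$ is homogeneous of degree $(n-3)/2$, yielding the hypersurface of the stated degree.

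There is really no main obstacle here: once the identification in step two is in place, the corollary is an immediate specialization of Theorem \ref{thm:respd3} combined with the degree computation of Lemma \ref{lem:turaev}/Remark \ref{rem:deg}. The only delicate point worth flagging is to make sure that in the BP-generic case $\Pf(\mu)$ is nonzero, so that $V(\Pf(\mu))$ is genuinely a hypersurface rather than all of $V$; this nonvanishing is exactly what Lemma \ref{lem:generic} provides.
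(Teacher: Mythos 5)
Your proposal is correct and follows essentially the same route as the paper's proof: pass to the associated $\PD_3$ algebra, identify the union of $\mu$-singular planes with $\RR^1_1(A)$ via Lemmas \ref{lem:sing} and \ref{lem:isotropic}, and then specialize Theorem \ref{thm:respd3} together with Remark \ref{rem:deg}, using Lemma \ref{lem:generic} to handle the BP-genericity dichotomy. No substantive differences.
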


\begin{proof}
Let $A$ be the $\PD_3$ algebra corresponding to $\mu$.  
By Lemmas \ref{lem:isotropic}  and \ref{lem:sing}, the union of all 
$\mu$-singular planes in $A^1=V$ coincides with $\RR^1_1(A)$.  
Suppose that $n$ is odd, $n \ge 5$, and assume $\RR^1_1(A)\ne A^1$ 
(by Lemma \ref{lem:generic}, this means that $\mu$ is BP-generic).
It follows from Theorem \ref{thm:respd3}  
that $\RR^1_1(A)=V(\Pf(\mu))$. By Remark \ref{rem:deg}, $\Pf(\mu)$ 
is a  homogeneous polynomial of degree $(n-3)/2$, and we are done.
\end{proof}

\subsection{Another genericity condition}
\label{subsec:dfmr}

For a trivector $\mu\in \bigwedge^3 V^*$, there is another genericity condition 
studied by De~Poi, Faenzi, Mezzetti, and Ranestad in \cite{DFMR}. 
This condition requires that, for any non-zero vector $v\in V$, the 
bilinear form $\gamma_v$ from \eqref{eq:2form} have rank greater than $2$ 
(this is condition (GC3) from Definition 2.9 in {\it loc.~cit.}, a condition 
which implies that $\mu$ is irreducible).

In the presence of the aforementioned genericity condition, 
a more precise geometric description of the two top resonance 
schemes of the corresponding $\PD_3$ algebra is given in 
\cite[Prop.~4.4]{DFMR}.   We summarize this result in our 
terminology, as follows. 

\begin{theorem}[\cite{DFMR}]
\label{thm:respd3-bis}
Let $A$ be a $\PD_3$ algebra over $\C$, and 
suppose $\mu_A$ is generic in the above sense.  
Writing $n=\dim A^1$, the following hold.
\begin{enumerate}
\item \label{df1}
If $n$ is odd, then $\mathbfcal{R}^1_1(A)$ is a hypersurface of degree $(n-3)/2$ 
which is smooth if $n\le 7$, and singular in codimension $5$ if $n\ge 9$.
\item \label{df2}
If $n$ is even, then $\mathbfcal{R}^1_2(A)$ has codimension $3$ 
and degree $\tfrac{1}{4}\binom{n-2}{3}+1$; it is smooth if $n\le 10$, 
and singular in codimension $7$ if $n\ge 12$.
\end{enumerate}
\end{theorem}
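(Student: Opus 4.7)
The plan is to reduce this theorem to \cite[Prop.~4.4]{DFMR} by identifying the resonance schemes $\mathbfcal{R}^1_k(A)$ in question with the Pfaffian degeneracy loci of the matrix $\delta^1_A$ analyzed there.  My first step would be to observe, per Remark \ref{rem:others}, that after choosing bases the skew-symmetric matrix of linear forms $\delta^1_A$ appearing in \eqref{eq:koszul 3mfd} coincides with the matrix studied in \cite{DFMR}.  Combining \eqref{eq:r1ka} with Theorem \ref{theorem:be}, I would then rewrite the relevant schemes explicitly in terms of Pfaffian ideals: $\mathbfcal{R}^1_1(A)$ is cut out by $\Pf_{n-1}(\delta^1_A)$ when $n$ is odd, while $\mathbfcal{R}^1_2(A)$ is cut out by $\Pf_{n-2}(\delta^1_A)$ when $n$ is even.

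For the odd case, the next step uses Lemma \ref{lem:turaev} and Corollary \ref{cor:minors} to identify $I_{n-1}(\delta^1_A)$ with $\m^{2}\cdot(\Pf(\mu)^{2})$.  Since the condition (GC3) of \cite{DFMR} is strictly stronger than BP-genericity, Theorem \ref{thm:respd3} gives $\RR^1_1(A) = V(\Pf(\mu))$, a hypersurface of degree $(n-3)/2$ by Remark \ref{rem:deg}.  The smoothness statement for $n\le 7$ and the codimension-$5$ singular locus for $n\ge 9$ are then precisely \cite[Prop.~4.4(i)]{DFMR}; their proof proceeds by computing the Jacobian ideal of $\Pf(\mu)$ and showing, under (GC3), that it defines the locus where $\delta^1_A$ has a further rank drop, whose codimension can be read off from the standard rank stratification of skew-symmetric matrices.

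For the even case, $\mathbfcal{R}^1_2(A)$ is the locus where $\delta^1_A$ has rank at most $n-4$.  Under (GC3), this scheme attains its expected codimension $3$ (relative to the structural rank deficiency forced by the identity $\delta^1_A\circ\delta^0_A=0$), and the degree formula $\tfrac14\binom{n-2}{3}+1$ comes from the J{\'o}zefiak--Lascoux--Pragacz formula for degrees of Pfaffian ideals applied to this corank drop.  The smoothness statement for $n\le 10$ and codimension-$7$ singularity for $n\ge 12$ follow from the analogous rank-stratification analysis applied one step further.  All three assertions are packaged in \cite[Prop.~4.4(ii)]{DFMR}, which I would invoke directly after the identification above.

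The main obstacle is verifying that (GC3)---which is the pointwise condition that $\gamma_{v}$ have rank at least $4$ for every nonzero $v\in A^1$---forces the global Pfaffian loci of $\delta^1_A$ to behave as though $\delta^1_A$ were a generic skew-symmetric matrix of linear forms, in the sense that its rank strata attain the expected codimensions and their singular loci have the expected structure.  This is the technical heart of \cite{DFMR}, and I would appeal to their results for this step rather than reprove them; the remaining work in our setting is essentially the translation from their matrix-theoretic framework into the language of resonance schemes developed in \S\ref{sect:bgg} and \S\ref{sect:res pfaff}.
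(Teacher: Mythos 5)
Your proposal matches the paper's treatment: the paper gives no independent proof of this statement, but simply translates \cite[Prop.~4.4]{DFMR} into the language of resonance schemes via the identification of $\mathbfcal{R}^1_k(A)$ with the Pfaffian degeneracy loci of the skew-symmetric matrix $\delta^1_A$ (Remarks \ref{rem:others} and \ref{rem:scheme-bis}, together with \eqref{eq:r1ka} and Theorem \ref{theorem:be}), exactly as you do. Your additional sketch of the internal arguments of \cite{DFMR} is not needed for (or checked by) the paper, but since you explicitly defer to their result for those steps, the two approaches are essentially identical.
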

\newpage
\appendix
\section{Resonance varieties of $3$-forms of low rank}
\label{sect:tables}

The following tables list the irreducible $3$-forms $\mu=\mu_A$ of rank $n\le 8$, 
and the corresponding resonance varieties, $\RR_k=\RR^1_k(A)$.  The 
ground field $\k$ is either $\C$ or $\R$, as indicated. For 
simplicity, we will denote a trivector $e^i\wedge e^j \wedge e^k$ as $ijk$. 
We use the classification of $3$-forms of rank at most $8$ of Gurevich \cite{Gu}, 
with further elaborations from \cite{CH, Dj1, Dj2}.   For $n=6$ and $7$, we 
record the way complex orbits split into real orbits, based on the tables of 
Djokovi\'{c} \cite{Dj1}. The computation of the resonance 
varieties was done using the package Macaulay2 \cite{GS}. 

\renewcommand{\arraystretch}{1.4}
\vspace*{1.4pc}
\noindent%
{\small
\begin{tabular}
{|c|c|c|c|c|}
\hline
$\C$
&$\mu$
&$\RR_1$
&$\RR_2$
&$\RR_3$
\\ 
\hline
I 
&$0$ 
&$\emptyset$
&$\emptyset$
&$\emptyset$
\\
\hline
II  
&$123$   
&$0$
&$0$
&$0$
\\
\hline
III
&$125+345$ 
&$\{x_5=0\}$
&$\{x_5=0\}$
&0 
\\
\hline
\end{tabular}

\vspace*{1.4pc}
\noindent
\begin{tabular}
{|c|c|>{\centering\arraybackslash}m{1.05in}|c|>{\centering\arraybackslash}m{3.25in}|c|}
\hline
$\C$ & $\R$
&$\mu$
&$\RR_1$
&$\RR_2= \RR_3$
&$\RR_4$
\\
\hline
IV &
&$135+234+126$ 
&$\k^6$
&$\{x_1=x_2=x_3=0\}$
&0
\\
\hline
V & a
&$123+456$   
&$\k^6$   
&$\{x_1=x_2=x_3=0\}\cup  \{x_4=x_5=x_6=0\}$
& 0
\\
\cline{2-6}
 & b 
&$-135+146+236+245$
&$\k^6$   
&$V(x_1^2+x_2^2, x_3^2+x_4^2, x_5^2+x_6^2,x_4x_5-x_3x_6,
x_3x_5+x_4x_6, \linebreak \hspace*{12pt} x_2x_5-x_1x_6,
x_1x_5+x_2x_6,x_2x_3-x_1x_4,x_1x_3+x_2x_4)$
& 0
\\
\hline
\end{tabular}

\vspace*{1.4pc}
\noindent
\resizebox{6.2in}{!} {%
\begin{tabular}
{|c|c|>{\centering\arraybackslash}m{1.5in}|c|>{\centering\arraybackslash}m{1.7in}|c|}
\hline
$\C$ & $\R$
&$\mu$
&$\RR_1=\RR_2$
&$\RR_3=\RR_4$
&$\RR_5$ 
\\ 
\hline
VI   
&
&$123+145+167$  
&$\{x_1=0\}$
&$\{x_1=0\}$
& 0
\\
\hline
VII 
&
&$125+136+147+234$
&$\{x_1=0\}$
&$\{x_1=x_2=x_3=x_4=0\}$
& 0
\\
\hline
VIII 
&a
&$134+256+127$
&$\{x_1=0\}\cup \{x_2=0\}$
&$\{x_1=x_2=x_3=x_4=0\}\cup\linebreak 
\{x_1=x_2=x_5=x_6=0\} $
& 0
\\
\cline{2-6}
&b
&  $-135+146+236+245+127$
&$\{x_1^2+x_2^2=0\}$
& $V(x_1,x_2,x_3^2+x_4^2,x_5^2+x_6^2,x_3x_5+x_4x_6,x_4x_5-x_3x_6)$
& 0
\\
\hline
IX  
&a
&$125+346+137+247$
&$\{x_1x_4+x_2x_5=0\}$
&$V(x_7^2-x_3x_6,x_1,x_2,x_4,x_5)$
& 0
\\
\cline{2-6}
&b
& $-135+146+236+245+127+347$
& $\{x_1x_3+x_2x_4=0\}$
& $V(x_7^2-x_5x_6,x_1,x_2,x_3,x_4)$
& 0
\\
\hline
X  
&a
&$123+456+147+257+367$
&$\{x_1x_4+x_2x_5+x_3x_6=x_7^2\}$
&0
& 0
\\
\cline{2-6}
&b
&$-135+146+236+245+127+347+567$
&$\{x_1^2+x_2^2+x_3^2+x_4^2+x_5^2+x_6^2+x_7^2=0\}$
& 0 
& 0
\\
\hline
\end{tabular}
}
}
\newpage
\noindent
\resizebox{6.2in}{!} {%
\begin{tabular}
{|c|>{\centering\arraybackslash}m{1.4in}|c|>{\centering\arraybackslash}m{1.9in}|
>{\centering\arraybackslash}m{1.95in}|c|}
\hline
$\C$
&$\mu$
&$\RR_1$
&$\RR_2=\RR_3$
&$\RR_4=\RR_5$
&$\RR_6$
\\ 
\hline
XI
& $147+257+367+358$  
&$\C^8$
&$\{x_7=0\}$
&$\{x_3=x_5=x_7=x_8=0\}\cup\linebreak 
\{x_1=x_3=x_4=x_5=x_7=0\}$
&0
\\
\hline
XII
& $456+147+257+367+358$  
&$\C^8$
&$\{x_5=x_7=0\}$
&$\{x_3=x_4=x_5=x_7=x_1x_8+x_6^2=0\}$
&0
\\
\hline
XIII
& $123+456+147+358$ 
&$\C^8$
&$\{x_1=x_5=0\}\cup\linebreak \{x_3=x_4=0\}$
&$\{x_1=x_3=x_4=x_5=x_2x_6+x_7x_8=0\}$
&0
\\
\hline
XIV
& $123+456+147+257+358$ 
&$\C^8$
&$\{x_1=x_5=0\}\cup\linebreak 
\{x_3=x_4=x_5=0\}$  
&$\{x_1=x_2=x_3=x_4=x_5=x_7=0\}$
&0
\\
\hline
XV
&  $123+456+147+257+367+358$ 
&$\C^8$
&$\{x_3=x_5=x_1x_4-x_7^2=0\}$ 
&$\{x_1=x_2=x_3=x_4=x_5=x_6=x_7=0\}$
&0
\\
\hline
XVI
&  $147+268+358$    
&$\C^8$
&$\{x_1=x_4=x_7=0\}\cup \{x_8=0\}$  
&$\{x_1=x_4=x_7=x_8=0\}\cup\linebreak  
\{x_2=x_3=x_5=x_6=x_8=0\}$
&0
\\
\hline
XVII
&  $147+257+268+358$ 
&$\C^8$
&$\{x_7=x_8=0\}\cup \{x_2=x_5=x_8=0\}\cup
\{x_1=x_4=x_7=0\}$ 
&$\{x_1=x_2=x_4=x_5=x_7=x_8=0\}\cup  
\{x_2=x_3=x_5=x_6=x_7=x_8=0\}$
&0
\\
\hline
XVIII
&   $456+147+257+268+358$ 
&$\C^8$
&$\{x_5=x_8=x_4x_6-x_2x_7=0\}\cup\linebreak  
\{ x_4=x_7=x_5^2-x_1x_8=0\}$ 
&$\{x_1=x_2=x_4=x_5=x_6=x_7=x_8=0\} \cup 
\{x_2=x_3=x_4=x_5=x_6=x_7=x_8=0\}$
&0
\\
\hline
XIX
&  $147+257+367+268+358$ 
&$\C^8$
&$\{x_2-x_3=x_5-x_6=x_7-x_8=0\} \cup
\{x_2+x_3=x_5+x_6=x_7+x_8=0\} \cup 
\{x_7=x_8=0\} \cup \{x_1=x_4=x_7=0\}$  
&$\{x_1=x_4=x_7=x_8=x_2-x_3=x_5-x_6=0\} 
\cup \{x_1=x_4=x_7=x_8=x_2+x_3=x_5+x_6=0\} 
\cup \{x_2=x_3=x_5=x_6=x_7=x_8=0\}$
&0
\\
\hline
XX
&  $456+147+257+367+268+358$ 
&$\C^8$
&$\{x_5-x_6=x_7-x_8=x_4x_6-x_2x_7+ x_3x_7=0\} \cup 
\{x_5+x_6=x_7+x_8=x_4x_6-x_2x_7-x_3x_7=0\} \cup \linebreak
\{x_7=x_4=x_5^2-x_6^2-x_1x_8=0\}$ 
&$\{x_1=x_4=x_5=x_6=x_7=x_8=x_2-x_3=0\} \cup 
\{ x_1=x_4=x_5=x_6=x_7=x_8=x_2+x_3=0\} \cup 
\{ x_2=x_3=x_4=x_5=x_6=x_7=x_8=0\}$
&0
\\
\hline
XXI
&  $123+456+147+268+358$ 
&$\C^8$
&$\{x_3x_5+x_2x_6+x_7x_8=x_1=x_4=0\} 
\cup
\{x_1x_4+x_8^2=x_1x_3-x_6x_8=x_1x_2+x_5x_8=
x_4x_6+x_3x_8=x_4x_5-x_2x_8=x_3x_5+x_2x_6=0\}$  
&$\{x_1=x_2=x_3=x_4=x_5=x_6=x_8=0\}$   
&0
\\
\hline
XXII
&  $123+456+147+257+268+358$ 
&$\C^8$
&$\{f_1=\cdots = f_{20}=0\}$  
&$0$
&0
\\
\hline
XXIII
&  $123+456+147+257+367+268+358$ 
&$\C^8$
&$\{g_1=\cdots = g_{20}=0\}$ 
&$0$
&0
\\
\hline
\end{tabular}
}
\renewcommand{\arraystretch}{1.0}
\vspace*{4pt}

Note:  In XXII and XXIII, the polynomials $f_i$ and $g_i$ 
are homogeneous of degree $3$.  The varieties cut out by each of these 
two sets of polynomials have codimension $3$. 

\section*{Acknowledgement}
\label{sec:ack}
I would like to thank the referee for very helpful remarks and suggestions 
that led to improvements in both the substance and the exposition of the paper.

\newcommand{\arxiv}[1]
{\texttt{\href{http://arxiv.org/abs/#1}{arXiv:#1}}}
\newcommand{\arxi}[1]
{\texttt{\href{http://arxiv.org/abs/#1}{arxiv:}}
\texttt{\href{http://arxiv.org/abs/#1}{#1}}}
\newcommand{\arxx}[2]
{\texttt{\href{http://arxiv.org/abs/#1.#2}{arxiv:#1.}}
\texttt{\href{http://arxiv.org/abs/#1.#2}{#2}}}
\newcommand{\doi}[1]
{\texttt{\href{http://dx.doi.org/#1}{doi:\nolinkurl{#1}}}}
\renewcommand{\MR}[1]
{\href{http://www.ams.org/mathscinet-getitem?mr=#1}{MR#1}}
\newcommand{\MRh}[2]
{\href{http://www.ams.org/mathscinet-getitem?mr=#1}{MR#1 (#2)}}
\newcommand{\Zbl}[1]
{\href{http://zbmath.org/?q=an:#1}{Zbl #1}}


\begin{thebibliography}{00}

\bibitem{BP} B.~Berceanu, S.~Papadima, 
\href{http://dx.doi.org/10.1007/BF01459745} 
{\em Cohomologically generic $2$-complexes and 
$3$-dimensional {P}oincar\'{e} complexes},  
Math. Ann. \textbf{298} (1994), no.~3, 457--480. 
\MR{1262770}

\bibitem{BH}
A.~Borel, Harish-Chandra,
\href{http://dx.doi.org/10.2307/1970210}
{\em Arithmetic subgroups of algebraic groups}, 
Ann. of Math. (2) \textbf{75} (1962), 485--535. 
\MR{0147566} 

\bibitem{BE} D.~Buchsbaum, D.~Eisenbud, 
\href{https://dx.doi.org/10.2307/2373926}
{\em Algebra structures for finite free resolutions, and 
some structure theorems for ideals of codimension $3$},  
Amer. J. Math. \textbf{99} (1977), no.~3, 447--485; 
\MR{0453723}

\bibitem{CG} I.~Cardinali, L.~Giuzzi, 
\href{https://dx.doi.org/10.1515/advgeom-2018-0027}
{\em Geometries arising from trilinear forms on low-dimensional 
vector spaces}, Adv. Geom. \textbf{19} (2019), no.~2, 269--290.
\MR{3940925}

\bibitem{CH} A.M.~Cohen, A.G.~Helminck, 
\href{https://dx.doi.org/10.1080/00927878808823558}%
{\em Trilinear alternating forms on a vector space of dimension $7$},  
Comm. Algebra \textbf{16} (1988), no.~1, 1--25. 
\MR{0921939} 

\bibitem{DFMR} 
P.~De~Poi, D.~Faenzi, E.~Mezzetti, K.~Ranestad,
\href{http://aif.cedram.org/item?id=AIF_2017__67_5_2099_0}
{\em Fano congruences of index $3$ and alternating $3$-forms},
Ann. Inst. Fourier \textbf{67} (2017), no.~5, 2099--2165.
\MR{3732686}

\bibitem{DS18}  G.~Denham, A.I.~Suciu, 
{\em Algebraic models and cohomology jump loci}, 
preprint 2018.

\bibitem{DSY}
G.~Denham, A.I.~Suciu, S.~Yuzvinsky, 
\href{http://dx.doi.org/10.1007/s00029-017-0343-5}
{\em Abelian duality and propagation of resonance},
Selecta Math. \textbf{23} (2017), no.~4, 2331--2367.
\MR{3703455}

\bibitem{DP-ccm} A.~Dimca, S.~Papadima,
\href{http://dx.doi.org/10.1142/S0219199713500259}
{\em Non-abelian cohomology jump loci from an analytic viewpoint}, 
Commun. Contemp. Math. \textbf{16} (2014), 
no.~4, 1350025 (47 p). 
\MR{3231055}

\bibitem{DPS-duke} A.~Dimca, S.~Papadima, A.I.~Suciu,
\href{http://dx.doi.org/10.1215/00127094-2009-030} 
{\em Topology and geometry of cohomology jump loci}, 
Duke Math. Journal \textbf{148} (2009), no.~3, 405--457.
\MR{2527322}

\bibitem{DPS-mz} A.~Dimca, S.~Papadima, A.I.~Suciu,
\href{http://dx.doi.org/10.1007/s00209-010-0664-y}%
{\em Quasi-K\"{a}hler groups, $3$-manifold groups, and formality}, 
Math. Z. \textbf{268} (2011), no.~1-2, 169--186.  
\MR{2805428}.

\bibitem{DS} A.~Dimca,  A.I.~Suciu,
\href{http://dx.doi.org/10.4171/JEMS/158}
{\em Which $3$-manifold groups are K\"{a}hler groups?}, 
J. Eur. Math. Soc. \textbf{11} (2009), no.~3, 521--528.
\MR{2505439}

\bibitem{Dj1} D.\v{Z.}~Djokovi\'{c},  
\href{http://dx.doi.org/10.1080/03081088308817501}%
{\em Classification of trivectors of an eight-dimensional real vector space}, 
Linear and Multilinear Algebra \textbf{13} (1983), no.~1, 3--39.
\MR{0691457}

\bibitem{Dj2} D.\v{Z.}~Djokovi\'{c},   
\href{http://doi.org/10.4153/CMB-1983-014-8}
{\em Closures of equivalence classes of trivectors of an 
eight-dimensional complex vector space}, 
Canad. Math. Bull. \textbf{26} (1983), no.~1, 92--100. 
\MR{0681957}

\bibitem{DrS10} J.~Draisma, R.~Shaw, 
\href{http://dx.doi.org/10.1016/j.laa.2010.03.040}
{\em Singular lines of trilinear forms}, 
Linear Algebra Appl. \textbf{433} (2010), no.~3, 690--697.
\MR{2653833}

\bibitem{DrS14} J.~Draisma, R.~Shaw, 
\href{http://dx.doi.org/10.1007/s00022-013-0202-2}
{\em Some noteworthy alternating trilinear forms}, 
J.~Geom. \textbf{105} (2014), no.~1, 167--176.
\MR{3176345}

\bibitem{Ei} D.~Eisenbud, 
\href{http://dx.doi.org/10.1007/b137572}
{\em The geometry of syzygies},  
a second course in commutative algebra and algebraic geometry, 
Grad. Texts in Math., vol.~229, Springer-Verlag, New York, 2005.
\MR{2103875}

\bibitem{Fa97} M.J.~Falk, 
\href{http://dx.doi.org/10.1007/BF02558471}
{\em Arrangements and cohomology},
Ann. Combin. \textbf{1} (1997), no.~2, 135--157.  
\MR{1629681}

\bibitem{Fa07} M.J.~Falk, 
\href{http://dx.doi.org/10.1093/imrn/rnm009}
{\em Resonance varieties over fields of positive characteristic},  
Int. Math. Res. Not. IMRN \textbf{2007} (2007) no.~3, 
Art. ID rnm009, 25 pp. 
\MR{2337033}

\bibitem{GS} D.~Grayson, M.~Stillman, 
{\em Macaulay2, a software system for research in algebraic geometry}, 
Version 1.14, May 2019, \url{https://faculty.math.illinois.edu/Macaulay2/}

\bibitem{Gr} M.~Gromov, 
\href{http://www.numdam.org/item?id=PMIHES_1982__56__5_0}%
{\em Volume and bounded cohomology}, 
Inst. Hautes \'Etudes Sci. Publ. Math. \textbf{56} (1982), 5--99. 
\MR{0686042}

\bibitem{Gu} G.B.~Gurevich, 
{\em Foundations of the theory of algebraic invariants}, 
P. Noordhoff Ltd., Groningen, 1964. 
\MR{0183733} 

\bibitem{Hal} S.~Halperin, 
\href{http://doi.org/10.1090/S0002-9947-1977-0461508-8}%
{\em Finiteness in the minimal models of Sullivan}, 
Trans. Amer. Math. Soc. \textbf{230} (1977), 173--199. 
\MR{0461508} 

\bibitem{MeS} D.~Meyer, L.~Smith, 
\href{http://www.cambridge.org/us/academic/subjects/mathematics/algebra/poincare-duality-algebras-macaulays-dual-systems-and-steenrod-operations}%
{\em Poincar\'{e} duality algebras, Macaulay's dual systems, 
and Steenrod operations}, Cambridge Tracts in Math., 
vol.~167, Cambridge University Press, Cambridge, 2005. 
\MR{2177162} 

\bibitem{PS-mathann} S.~Papadima, A.I.~Suciu, 
\href{http://doi.org/10.1007/s00208-005-0704-9}
{\em Algebraic invariants for right-angled {A}rtin groups}, 
Math. Annalen, \textbf{334} (2006), no.~3, 533--555. 
\MR{2207874}

\bibitem{PS-plms} S.~Papadima, A.I.~Suciu,
\href{http://dx.doi.org/10.1112/plms/pdp045}%
{\emph{Bieri-{N}eumann-{S}trebel-{R}enz invariants and homology jumping
loci}}, Proc. Lond. Math. Soc. (3) \textbf{100} (2010), no.~3, 795--834.
\MR{2640291}

\bibitem{PS-springer} S.~Papadima, A.I.~Suciu,
\href{http://dx.doi.org/10.1007/978-3-319-09186-0_17}%
{\em Non-abelian resonance: product and coproduct formulas},  
in {\em Bridging Algebra, Geometry, and Topology}, 
269--280, Springer Proc. Math. Stat., vol.~96, Springer, Cham, 2014. 
\MR{3297121}

\bibitem{PS-sasaki} S.~Papadima, A.I.~Suciu, 
{\em The topology of compact Lie group actions through the lens 
of finite models}, Int. Math. Res. Notices IMRN 
(to appear), \doi{10.1093/imrn/rnx294}.

\bibitem{Rs} J.-E.~Roos, 
 \href{http://dx.doi.org/10.1216/JCA-2010-2-4-473}
{\em Three-dimensional manifolds, skew-Gorenstein rings and their 
cohomology},  J. Commut. Algebra \textbf{2} (2010), no.~4, 473--499. 
\MR{2753719} 

\bibitem{Sc} J.~Schouten,
\href{http://dx.doi.org/10.1007/BF03016791}
{\em Klassifizierung der alternierenden Gr\"{o}ssen dritten 
Grades in sieben Dimensionen}, 
Rend. Circ. Mat. Palermo \textbf{55} (1931), no.~1, 137--156. 
\Zbl{0001.35401}

\bibitem{Si} A.~Sikora, 
\href{http://dx.doi.org/10.1090/S0002-9947-04-03581-0}
{\em Cut numbers of $3$-manifolds}, 
Trans. Amer. Math. Soc. \textbf{357} (2005), no.~5, 2007--2020. 
\MR{2115088} 

\bibitem{SS10} L.~Smith, R.E.~Stong, 
\href{http://dx.doi.org/10.1016/j.aim.2010.04.013}%
{\em Poincar\'{e} duality algebras mod two}, 
Adv. Math. \textbf{225} (2010), no.~4, 1929--1985. 
\MR{2680196}

\bibitem{Su-toulouse} A.I.~Suciu, 
\href{http://dx.doi.org/10.5802/afst.1412}%
{\em Hyperplane arrangements and {M}ilnor fibrations}, Ann. Fac. 
Sci. Toulouse Math. \textbf{23} (2014), no.~2, 417--481.  
\MR{3205599}

\bibitem{Su-tcone3d} A.I.~Suciu,
{\em Cohomology jump loci of $3$-manifolds}, 
\arxiv{1901.01419v1}.

\bibitem{SW-mz} A.I.~Suciu, H.~Wang, 
\href{http://dx.doi.org/10.1007/s00209-016-1811-x}
{\em Pure virtual braids, resonance, and formality}, 
Math. Zeit. \textbf{286} (2017), no. 3-4, 1495--1524.   
\MR{3671586}  

\bibitem{Su75}  D.~Sullivan,
\href{http://dx.doi.org/10.1016/0040-9383(75)90009-9}
{\em On the intersection ring of compact three manifolds}, 
Topology \textbf{14} (1975), no.~3, 275--277.
\MR{0383415}

\bibitem{Tu} V.~Turaev, 
\href{http://dx.doi.org/10.1007/978-3-0348-7999-6}
{\em Torsions of $3$-dimensional manifolds}, 
Progress in Mathematics, vol.~208. Birkh\"{a}user 
Verlag, Basel, 2002. 
\MR{1958479}

\end{thebibliography}
\end{document}